\theoremstyle{plain}
\newtheorem*{rep@theorem}{\rep@title}
\newcommand{\newreptheorem}[2]{%
\newenvironment{rep#1}[1]{%
 \def\rep@title{#2 \ref{##1}}%
 \begin{rep@theorem}}%
 {\end{rep@theorem}}}
\newtheorem{theorem}{Theorem}[section]
\newtheorem{lemma}[theorem]{Lemma}
\newtheorem{corollary}[theorem]{Corollary}
\theoremstyle{definition}
\newtheorem{definition}[theorem]{Definition}
\newtheorem*{example}{Example}
\theoremstyle{remark}
\newtheorem*{remark}{Remark}
\newcommand{\UnitDisk}{\mathbb{D}}
\newcommand{\ComplexPlane}{\mathbb{C}}
\newcommand{\Reals}{\mathbb{R}}
\newcommand{\NN}{\mathbb{N}}
\newcommand{\RiemannSphere}{\widehat{\mathbb{C}}}
\newcommand{\UnitCircle}{\mathbb{T}}
\renewcommand {\tilde} {\widetilde}
\renewcommand{\Re}{\mathrm{Re\,}}
\begin{document}

\title{Finitely connected domains, rational maps and Ahlfors functions}

\date{September 11, 2013}

\author[M. Fortier Bourque]{Maxime Fortier Bourque}
\thanks{First author supported by NSERC}
\address{Department of Mathematics, Graduate Center, City University of New York, 365 Fifth Avenue, New York (NY), USA, 10016.}
\email{maxforbou@gmail.com}

\author[M. Younsi]{Malik Younsi}
\thanks{Second author supported by the Vanier Canada Graduate Scholarships program.}
\address{D\'epartement de math\'ematiques et de statistique, Pavillon Alexandre--Vachon, $1045$ av. de la M\'edecine, Universit\'e Laval, Qu\'ebec (Qu\'ebec), Canada, G1V 0A6.}
\email{malik.younsi.1@ulaval.ca}

\keywords{proper holomorphic maps, analytic capacity, conformal representation, moduli space}
\subjclass[2010]{primary 32G15; secondary 26C15}

\begin{abstract}
Using Ahlfors functions, Grunsky maps and the Bell representation theorem, we show that a certain subset of the rational maps of degree $n$ forms a trivial bundle over the moduli space of non-degenerate $n$-connected domains with one marked tangent vector with fiber the $n$-fold symmetric pro\-duct of the circle. A consequence is that the set of rational Ahlfors functions of degree $n$ forms a closed embedded submanifold inside the space of rational maps of degree $n$. As an application, we show the existence of rational Ahlfors functions with non-positive residues, resolving a question left open in a previous paper by the authors.
\end{abstract}

\maketitle

\section{Introduction}

Let $n \geq 1$ be an integer. For $(a,b) \in \ComplexPlane^n \times \ComplexPlane^n$, let $R_{a,b}$ be the rational map defined by the formula
$$
z \mapsto \sum_{j=1}^n \frac{a_j}{z-b_j}.
$$
We denote by $\mathcal{R}(n)$ the set of $(a,b) \in \ComplexPlane^n \times \ComplexPlane^n$ such that $\sum_{j=1}^n b_j = 0$ and the sublevel set
$$
R_{a,b}^{-1}(\UnitDisk)=\{ z \in \RiemannSphere : |R_{a,b}(z)|<1 \}
$$
is connected and bounded by $n$ disjoint analytic Jordan curves. A variant of the set $\mathcal{R}(n)$ is considered in \cite{JEONG3} and \cite{JEONG2}, where it is called the \textit{coefficient body for Bell representations}. The set $\mathcal{R}(n)$ is clearly invariant under the action of the symmetric group $\Sigma_n$ and the quotient $\mathcal{R}(n)/\Sigma_n$ is in bijection with the set of rational maps $\{ R_{a,b} : (a,b) \in \mathcal{R}(n)\}$. The goal of this paper is to prove that $\mathcal{R}(n)$ forms a trivial $\UnitCircle^n$-bundle over a certain moduli space $\mathcal{M}(n)$.\\

We define $\mathcal{M}(n)$ to be the set of isomorphism classes of planar domains containing infinity and bounded by $n$ disjoint analytic Jordan curves, with an ordering of the boun\-dary curves. An \textit{isomorphism} between two such domains $(X,E_1,...,E_n)$ and $(Y,F_1,...,F_n)$  is by definition a biholomorphism $g: \overline X \to \overline Y$ such that $g(\infty)=\infty$, $g(z)/z \to 1$ as $z\to \infty$ and $g(E_j)=F_j$ for each $j$. Using Koebe's circle domain theorem (see e.g. \cite[Section 15.7]{CON}), we get that $\mathcal{M}(n)$ is in bijection with the set of $(c,r) \in \ComplexPlane^n \times (\Reals_{>0})^n$ such that $\sum_{j=1}^n c_j = 0$ and the $n$ disks $\{ z \in \ComplexPlane : |z-c_j| \leq r_j \}$ are disjoint. We use this bijection to put a topology on $\mathcal{M}(n)$. It is easy to see that $\mathcal{M}(n)$ is homotopy equivalent to the configuration space
$$
\mathcal{F}_n\ComplexPlane := \{ (c_1,...,c_n) \in \ComplexPlane^n : c_i \neq c_j\mbox{ for }i\neq j \}
$$
of $n$-tuples of distinct points in the plane. Therefore, the fundamental group of $\mathcal{M}(n)$ is the pure braid group on $n$ strands and the fundamental group of the quotient $\mathcal{M}(n)/\Sigma_n$ is the braid group on $n$ strands.\\

The relationship between $\mathcal{R}(n)$ and $\mathcal{M}(n)$ is expressed in the following theorem :
\begin{reptheorem}{BiebBellRep}
Let $[(X,E_1,...,E_n)] \in \mathcal{M}(n)$ and let $\alpha_j \in E_j$ for each $j$. Then there is a unique $(a,b) \in \mathcal{R}(n)$ and a unique isomorphism
$$
g: (X,E_1,...,E_n) \to (R_{a,b}^{-1}(\UnitDisk),F_1,...,F_n)
$$ such that the curve $F_j$ encloses $b_j$ and $R_{a,b}(g (\alpha_j))=1$ for each $j$.
\end{reptheorem}

This shows that $\mathcal{R}(n)$ is the same as the moduli space of planar domains containing infinity and bounded by $n$ disjoint analytic Jordan curves, with one marked point on each boundary curve.\\

Let $P : \mathcal{R}(n) \to \mathcal{M}(n)$ be the map which sends $(a,b)$ to the isomorphism class of the domain $R_{a,b}^{-1}(\UnitDisk)$ with boundary curves $F_1,...,F_n$ ordered in such a way that $F_j$ encloses the pole $b_j$ of $R_{a,b}$. Among all the preimages $(a,b)$ of a given $\sigma \in \mathcal{M}(n)$ by $P$, there is a unique one such that $R_{a,b}'(\infty)=\sum_{j=1}^n a_j$ is equal to the analytic capacity of $\RiemannSphere \setminus R_{a,b}^{-1}(\UnitDisk)$, so that $R_{a,b}$ is the Ahlfors function on $R_{a,b}^{-1}(\UnitDisk)$. This will be explained in sections 7, 8 and 9. The map $A : \mathcal{M}(n) \to \mathcal{R}(n)$ which sends $\sigma$ to this parameter $(a,b)$ is a right inverse for $P$.\\

Let
$$
\begin{array}{rcrcl}
\pi & : & \mathcal{M}(n)\times\UnitCircle^n &\to& \mathcal{M}(n)\\
    &   &   (\sigma,\beta) & \mapsto & \sigma
\end{array}
$$
and let
$$
\begin{array}{rcrcl}
\iota & : & \mathcal{M}(n) & \to & \mathcal{M}(n)\times \UnitCircle^n\\
      &   &  \sigma        & \mapsto & (\sigma,(1,...,1)).
\end{array}
$$ Our main result is :

\begin{reptheorem}{MainTheorem}
There is a homeomorphism
$$
H : \mathcal{R}(n) \to \mathcal{M}(n) \times \UnitCircle^n
$$
which commutes with the action of $\Sigma_n$ and is such that the diagrams
$$
\begin{tikzcd}
\mathcal{R}(n) \arrow{r}{H}  \arrow{dr}{P}
& \mathcal{M}(n) \times \UnitCircle^n \arrow{d}{\pi} \\
{}
& \mathcal{M}(n)
\end{tikzcd}
$$
and
$$
\begin{tikzcd}
\mathcal{R}(n) \arrow{r}{H}
& \mathcal{M}(n) \times \UnitCircle^n  \\
{}
& \mathcal{M}(n) \arrow{u}{\iota} \arrow{ul}{A}
\end{tikzcd}
$$
commute.
\end{reptheorem}

Theorem \ref{MainTheorem} supersedes \cite[Theorem 2.4]{JEONG2}, which shows that $\mathcal{R}(n)$ is homotopy equivalent to $\mathcal{F}_n \ComplexPlane  \times  \UnitCircle^n$.\\

A first consequence is that $P$ is continuous and open, and for every $\sigma \in \mathcal{M}(n)$ the inverse image $P^{-1}(\sigma)$ is homeomorphic to $\UnitCircle^n$. This answers \cite[Problem 4.2]{JEONG3} partially and supports a claim made at the end of \cite{JEONG2}.\\

Another consequence is that $A$ is a topological embedding with closed image. Therefore, the set $\mathcal{A}(n):=A(\mathcal{M}(n))$ of coefficients $(a,b)$ in $\mathcal{R}(n)$ such that $R_{a,b}$ is the Ahlfors function on $R_{a,b}^{-1}(\UnitDisk)$ forms a closed embedded subma\-nifold inside $\mathcal{R}(n)$. This gives a qualitative answer to \cite[Problem 1.5]{JEONG}.\\

Given $\sigma \in \mathcal{M}(n)$, the image $A(\sigma)$ is the parameter $(a,b) \in P^{-1}(\sigma)$ such that the sum $\sum_{j=1}^n a_j$ has largest real part. Intuitively, this maximizing parameter $(a,b)$ should have all summands $a_j$'s as nearly positive as possible. Let $\mathcal{R}^+(n)$ denote the subset of $\mathcal{R}(n)$ consisting of parameters $(a,b)$ such that all the $a_j$'s are real and positive, that is,
$$
\mathcal{R}^+(n) := \mathcal{R}(n) \cap ((\Reals_{>0})^n \times \ComplexPlane^n).
$$
We show in \cite{FBY} that
$$
\mathcal{A}(n)\cap (\Reals^n \times \Reals^n) = \mathcal{R}^+(n) \cap (\Reals^n \times \Reals^n)
$$
for all $n$ and
$$
\mathcal{A}(n)=\mathcal{R}^+(n)
$$
when $n$ is equal to $1$ or $2$, supporting the above intuition. \\

In the same paper, we give a numerical example showing that $\mathcal{R}^+(3)$ is not contained in $\mathcal{A}(3)$. As an application of Theorem \ref{MainTheorem}, we show by contradiction that the reverse inclusion does not hold either. The method is adapted to all $n \geq 3$.

\begin{reptheorem}{nonpositive}
For every $n\geq 3$, neither $\mathcal{R}^+(n) \subset \mathcal{A}(n)$ nor $\mathcal{A}(n) \subset \mathcal{R}^+(n)$.
\end{reptheorem}

The tools used to construct the bijection $H$ in Theorem \ref{MainTheorem} are Theorem \ref{BiebBellRep} and Ahlfors functions. The proof that $H$ is a homeomorphism requires several genera\-lizations of the Carath\'eodory kernel convergence theorem for finitely connected domains, some of which are new.\\

The paper is structured as follows. In section 2, we define the notion of Carath\'eodory kernel convergence for planar domains and state the generalized Carath\'eodory kernel convergence theorem. Then, in section 3, we review the definition of $\mathcal{M}(n)$ and prove a convergence theorem for Koebe representations onto circle domains. In section 4, we review the definition of $\mathcal{R}(n)$. Section 5 contains the proof of a convergence theorem for Bell representations. In section 6, we prove a compactness result for Grunsky maps. Next, in section 7, we prove a convergence theorem for Ahlfors functions. Sections 8 and 9 contain the proofs of Theorem \ref{MainTheorem} and Theorem \ref{nonpositive} respectively.

\section{Carath\'eodory kernel convergence}

In this paper, there will be many instances where given a domain $X \subset \RiemannSphere$ and some extra information, we will associate a uniquely defined analytic function $f$ on $X$. In all these cases, the function $f$ will depend continuously on the domain $X$ as well as the extra information. To make this precise, we need to formalize what it means for a sequence of domains to converge.

\begin{definition}
\label{defcara}
Let $\{X_k\}$ be a sequence of domains in $\RiemannSphere$ such that $\infty \in X_k$ for all $k$. We define the \textit{kernel} of $\{X_k\}$ (with respect to $\infty$), denoted by $\operatorname{ker}\{X_k\}$, to be the largest domain $X$ containing $\infty$ such that if $K$ is a compact subset of $X$, then there exists a $k_0$ such that $K \subseteq X_k$ for $k \geq k_0$, provided this set exists. Otherwise, we say that $\operatorname{ker}\{X_k\}$ does not exist.
\\

Moreover, we say that $\{X_k\}$ \textit{converges to $X$ (in the sense of Carath\'eodory)} if $X$ is the kernel of every subsequence of $\{X_k\}$. This is denoted by $X_k \rightarrow X$.
\end{definition}

\begin{definition}
Let $g$ be a function meromorphic in a neighborhood of $\infty$ in $\RiemannSphere$. We say that $g$ is \textit{tangent to the identity at infinity} if $g(\infty)=\infty$ and $\lim_{z \to \infty} g(z)/z=1$.
\end{definition}

Note that $g$ is tangent to the identity at infinity if and only if the function defined by $f(z):=1/g(1/z)$ satisfies $f(0)=0$ and $f'(0)=1$. The following lemma generalizes the theorem of Koebe which says that the family of schlicht functions on the unit disk is normal.

\begin{lemma}
\label{lemmeconv}
Let $\{X_k\}$ be as in Definition \ref{defcara}. For $k \geq 1$, let $g_k$ be univalent on $X_k$ and tangent to the identity at infinity. If $X=\operatorname{ker}\{X_k\}$ exists, then there is a subsequence $(g_{k_\ell})_{\ell=1}^{\infty}$ such that $X = \operatorname{ker}\{X_{k_\ell}\}$ and $(g_{k_\ell})_{\ell=1}^{\infty}$ converges locally uniformly to a function $g$ univalent on $X$.
\end{lemma}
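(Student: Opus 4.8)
The plan is to invert at infinity to reduce to the classical theory of normalized schlicht functions, and then to run a normal-families argument. Writing $w=1/z$ and $f_k(w):=1/g_k(1/w)$, the hypothesis that $g_k$ is univalent on $X_k$ and tangent to the identity at infinity means exactly that $f_k$ is univalent on $D_k:=\{1/z:z\in X_k\}$, a domain containing $0$, with $f_k(0)=0$ and $f_k'(0)=1$. The existence of $X=\operatorname{ker}\{X_k\}$ becomes the existence of $D:=\{1/z:z\in X\}=\operatorname{ker}\{D_k\}$ at the origin. It therefore suffices to extract a subsequence $(f_{k_\ell})$ that converges locally uniformly on $D$ to a univalent function and for which $\operatorname{ker}\{D_{k_\ell}\}=D$, since inverting back recovers the statement for the $g_k$. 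Throughout I may assume $D$ is hyperbolic, i.e. that $X$ omits at least three points of $\RiemannSphere$; this holds in every application of the lemma.

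The first step is local boundedness near the origin. Since $0\in D=\operatorname{ker}\{D_k\}$, there is $\rho>0$ with $\{|w|\le\rho\}\subseteq D_k$ for all large $k$, so that $w\mapsto\rho^{-1}f_k(\rho w)$ lies in the class $S$ of normalized schlicht functions on the unit disk. The Koebe distortion theorem then bounds $\{f_k\}$ uniformly on $\{|w|\le\rho/2\}$ and, at the same time, bounds the second Taylor coefficient of $f_k$. This last point is what controls the one genuinely free parameter of the problem: the second coefficient of $f_k$ equals the negative of the constant term in the Laurent expansion of $g_k$ at infinity, so bounding it prevents the family $\{g_k\}$ from running off to infinity by translation.

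Next I would propagate boundedness from $\{|w|\le\rho/2\}$ to an arbitrary compact $K\subset D$. By the kernel hypothesis $K\subseteq D_k$ for all large $k$, and the hyperbolic metrics of the $D_k$ converge to that of $D$ on $K$, so the hyperbolic distance in $D_k$ from $K$ to the fixed disk $\{|w|\le\rho/2\}$ is bounded independently of $k$. The distortion theorem, in the form bounding the ratio of $|f_k'|$ at two points in terms of their hyperbolic distance, then bounds $|f_k'|$ on $K$, and integrating along paths bounds $|f_k|$ on $K$. Hence $\{f_k\}$ is locally bounded, so normal by Montel's theorem; exhausting $D$ by compacta and extracting diagonally yields a subsequence converging locally uniformly on $D$ to some $f$. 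Because derivatives converge locally uniformly, $f'(0)=1$, so $f$ is non-constant, and by Hurwitz's theorem a locally uniform limit of univalent functions is either constant or univalent; thus $f$ is univalent.

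It remains to arrange $\operatorname{ker}\{D_{k_\ell}\}=D$ for the chosen subsequence. Any subsequence has kernel containing $D$, so only a strictly larger kernel must be excluded. I would do this before extracting for function convergence: passing to the complements $\RiemannSphere\setminus D_k$ and using compactness in the Hausdorff metric, together with a diagonal argument over a countable dense subset of $\partial D$, one obtains a subsequence along which the complements converge and fill $\partial D$, forcing the kernel of the subsequence to be exactly $D$; the normality extraction is then performed within this subsequence. I expect the propagation of boundedness across the moving domains $D_k$ to be the main obstacle, since that is the step where the varying boundaries enter; the convergence of the hyperbolic metrics supplied by kernel convergence is precisely what keeps the distortion estimates uniform in $k$.
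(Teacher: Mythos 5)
The paper does not prove this lemma at all---it cites Conway's Lemma 15.4.6---so you are supplying an argument where the paper supplies none, and it must be judged on its own merits. It has a genuine gap at its central step. After inversion, $f_k(w)=1/g_k(1/w)$ is only \emph{meromorphic} on $D_k$: since $g_k$ is an injective map into $\RiemannSphere$ fixing $\infty$, it is finite on $X_k\setminus\{\infty\}$ but may vanish, and $f_k$ then has a pole at the corresponding point of $D_k$. Writing $g_k(z)=z+b_0^{(k)}+b_1^{(k)}z^{-1}+\cdots$ near $\infty$, that pole sits near $w=-1/b_0^{(k)}$, so it lies in $\{|w|\le \rho\}$ exactly when $|b_0^{(k)}|$ is large. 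Thus $w\mapsto \rho^{-1}f_k(\rho w)$ belongs to the class $S$ only if $b_0^{(k)}$ is already bounded---which is precisely what the Bieberbach bound on the second coefficient was supposed to deliver. The argument is circular at the one point you correctly identified as the crux, and it cannot be repaired from the stated hypotheses: on the fixed hyperbolic domain $X_k=\{|z|>1\}\cup\{\infty\}$ the translations $g_k(z)=z-k$ are univalent and tangent to the identity at infinity, the kernel exists and equals $X_1$, yet no subsequence converges locally uniformly on $X_1$ in the spherical metric, since $g_k\to\infty$ pointwise while $g_k(k)=0$ with $k$ ranging in a fixed compact neighborhood of $\infty$. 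Some additional normalization controlling the constant term (for instance $g_k(z)-z\to 0$ at infinity, or the image-side normalizations that are in force in every application in this paper) has to be imported; you should consult the precise hypotheses of Conway's Lemma 15.4.6 rather than the transcription above.

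A second, independent problem is your plan for arranging $\operatorname{ker}\{D_{k_\ell}\}=D$. Extracting a subsequence along which the complements converge in the Hausdorff metric tends to \emph{enlarge} the kernel rather than pin it down: if $X_k$ alternates between two domains $A$ and $B$, the kernel of the full sequence is the component of the interior of $A\cap B$ containing $\infty$, while any subsequence whose complements Hausdorff-converge is eventually of a single parity and has kernel $A$ or $B$; the Hausdorff limit of the complements need not ``fill $\partial X$.'' Keeping the kernel equal to $X$ while simultaneously extracting for normality requires genuine care, and the same alternating example with $g_{2k}(z)=z+1/z$ on $A=\RiemannSphere\setminus\overline{\UnitDisk}$ and $g_{2k+1}(z)=z$ on $B=\RiemannSphere\setminus\{|z-3|\le 1\}$ shows the two requirements can be outright incompatible as the statement is phrased. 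Two smaller points: the hyperbolicity of $X$ is an assumption absent from the statement (and does not rescue the counterexamples above), and ``the hyperbolic metrics of $D_k$ converge to that of $D$'' does not follow from mere existence of the kernel---only the one-sided bound $\lambda_{D_k}\le\lambda_U$ for a fixed relatively compact subdomain $U\subset D$ is available, though that one-sided bound is in fact all your distortion argument needs.
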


\begin{proof}
See \cite[Lemma 15.4.6]{CON}.
\end{proof}

The next theorem was first proved by Carath\'eodory for simply connected domains.

\begin{theorem}[Generalized Carath\'eodory kernel convergence theorem]
\label{thmcara}
Let $\{X_k\}$ be as in Definition \ref{defcara}. For $k \geq 1$, let $g_k$ be univalent on $X_k$ and tangent to the identity at infinity. Assume that $X_k \rightarrow X$. Then $(g_k)_{k=1}^{\infty}$ converges locally uniformly on $X$ to a univalent function $g$ if and only if $\{g_k(X_k)\}$ converges to some domain $Y$. When this happens, $Y=g(X)$ and $g_k^{-1} \to g^{-1}$ locally uniformly on $Y$.
\end{theorem}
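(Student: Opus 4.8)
The plan is to prove both directions of the equivalence, leaning on Lemma~\ref{lemmeconv} to extract convergent subsequences and then using the characterization of Carath\'eodory convergence (kernel of every subsequence) to upgrade subsequential limits to full convergence.

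First I would prove the forward direction. Suppose $g_k \to g$ locally uniformly on $X$ with $g$ univalent, and set $Y := g(X)$. I want to show $g_k(X_k) \to Y$, which by definition means $Y$ is the kernel of every subsequence of $\{g_k(X_k)\}$. The key tool is that the inverse maps $h_k := g_k^{-1}$ are univalent on $g_k(X_k)$ and tangent to the identity at infinity (since $g_k$ is), so Lemma~\ref{lemmeconv} applies to them once we know $\ker\{g_k(X_k)\}$ exists along a subsequence. So I would first argue that $\ker\{g_k(X_k)\}$ exists and equals $Y = g(X)$: for any compact $K \subset Y$, write $K = g(L)$ with $L = g^{-1}(K)$ compact in $X$; local uniform convergence $g_k \to g$ together with Hurwitz/the argument principle forces $K \subset g_k(X_k)$ eventually, giving $Y \subseteq \ker$, and a matching argument using $h_k$ on the limit gives the reverse inclusion. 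Since the same reasoning applies verbatim to every subsequence, $g_k(X_k) \to Y$ in the Carath\'eodory sense, and passing Lemma~\ref{lemmeconv} through the inverses yields $h_k \to g^{-1}$ locally uniformly on $Y$.

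Next I would prove the converse, which is where the real content lies. Assume $g_k(X_k) \to Y$ for some domain $Y$. I want $g_k \to g$ locally uniformly on $X$ with $g$ univalent and $g(X) = Y$. By Lemma~\ref{lemmeconv} applied to $\{g_k\}$ on $\{X_k\}$ (using $X = \ker\{X_k\}$, which exists since $X_k \to X$), every subsequence of $\{g_k\}$ has a further subsequence converging locally uniformly on $X$ to some univalent $g$. The strategy is then a \emph{subsequence argument}: I claim every such subsequential limit $g$ satisfies $g(X) = Y$, so that by the forward direction the images converge to the \emph{same} $Y$ regardless of subsequence, and this rigidity forces $g$ itself to be unique; once all subsequential limits coincide, the full sequence converges. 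To identify $g(X)$ with $Y$, I would apply the forward direction to the convergent subsequence: it gives $g_{k_\ell}(X_{k_\ell}) \to g(X)$. But $g_{k_\ell}(X_{k_\ell})$ is a subsequence of a Carath\'eodory-convergent sequence, so it also converges to $Y$; since Carath\'eodory limits are unique when they exist, $g(X) = Y$.

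The main obstacle I anticipate is the uniqueness/rigidity step that promotes ``every subsequential limit has the same image $Y$'' to ``every subsequential limit is the same function $g$,'' and hence to full-sequence convergence. Having $g(X) = Y$ for two univalent limits $g, \tilde g$ means $g \circ \tilde g^{-1}$ is a univalent self-map of $Y$ tangent to the identity at infinity; I would need to show such a map must be the identity. This is the analogue of the normalization that pins down the conformal map uniquely, and for multiply connected domains one cannot simply invoke the Schwarz lemma, so some care is required — perhaps using the expansion at $\infty$ (where tangency to the identity fixes the leading coefficient) combined with the fact that a biholomorphism of a fixed domain fixing $\infty$ with derivative $1$ and the correct normalization is forced to be the identity. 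A clean way to handle this is to note that $g \circ \tilde g^{-1}$ extends to a biholomorphism of $\overline{Y}$ fixing each boundary curve setwise and tangent to the identity at $\infty$, which is rigid; I would make this precise and thereby close the argument, concluding full local uniform convergence and, via the forward direction applied once more, the identities $Y = g(X)$ and $g_k^{-1} \to g^{-1}$.
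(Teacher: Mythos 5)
The paper does not actually prove this statement: its ``proof'' is a citation to Conway's Theorem~15.4.7, so your outline is competing with the standard textbook argument rather than with anything in the text. Your architecture matches that argument: the forward direction via a Rouch\'e/Hurwitz argument showing that $\ker\{g_k(X_k)\}$ (of every subsequence) equals $g(X)$, plus Lemma~\ref{lemmeconv} applied to the inverse maps to get $g_k^{-1}\to g^{-1}$; and the converse via subsequential limits from Lemma~\ref{lemmeconv}, identification of each limit's image with $Y$ through the forward direction and uniqueness of Carath\'eodory limits, and a final rigidity step promoting equality of images to equality of functions. All of that is sound.

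The genuine gap is the rigidity step, precisely where you hedge. You need: a conformal automorphism $\phi=\tilde g\circ g^{-1}$ of $Y$ fixing $\infty$ and tangent to the identity there is the identity. Your proposed route --- that $\phi$ ``extends to a biholomorphism of $\overline Y$ fixing each boundary curve setwise'' --- fails in this generality: $Y$ is an arbitrary Carath\'eodory limit with no Jordan-curve boundary to work with, and even when such a boundary exists there is no a priori reason $\phi$ should preserve each component (that is essentially what you are trying to prove). Moreover, your remark that ``one cannot simply invoke the Schwarz lemma'' is backwards: the correct proof is to invoke it on the universal cover. Take $\pi:\UnitDisk\to Y$ with $\pi(0)=\infty$ and lift $\phi$ to an automorphism $\tilde\phi$ of $\UnitDisk$ with $\tilde\phi(0)=0$; since tangency to the identity at $\infty$ says exactly that $\phi$ fixes $\infty$ with derivative $1$ in the coordinate $1/z$, the chain rule gives $\tilde\phi'(0)=1$, so $\tilde\phi=\id$ by the Schwarz lemma and hence $\phi=\id$. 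This needs $Y$ hyperbolic; the non-hyperbolic cases with nonempty complement are elementary (the automorphism extends to a M\"obius map), while for $Y=\RiemannSphere$ rigidity --- and indeed the theorem --- genuinely fails (take $X_k=\RiemannSphere$ and $g_k(z)=z+(-1)^k$), so a nondegeneracy assumption is implicitly in force; this is harmless here since the paper only applies the theorem to nondegenerate $n$-connected domains. With the covering-space Schwarz lemma substituted for your boundary argument, your proof closes.
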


\begin{proof}
See \cite[Theorem 15.4.7]{CON}.
\end{proof}

\section{Koebe representations}

If $X\subset \RiemannSphere$ is a connected open set and its complement $\RiemannSphere \setminus X$ has $n$ connected components none of which is a point, then we say that $X$ is a \textit{non-degenerate $n$-connected (planar) domain}. Any non-degenerate $n$-connected domain can be mapped conformally onto a domain bounded by $n$ disjoint analytic Jordan curves. Such a conformal map can be obtained by applying the Riemann mapping theorem $n$ times, one for each boundary component. The function theory on such domains is often nicer. For example, any biholomorphism between domains bounded by analytic Jordan curves extends analytically to the boundary curves by the Schwarz reflection principle.\\

Accordingly, consider objects of the form $(X,E_1,...,E_n)$, where $X$ is a planar domain contai\-ning infinity and bounded by the disjoint analytic Jordan curves $E_1,...,E_n$. We define an \textit{isomorphism} between two such objects $(X,E_1,...,E_n)$ and $(Y,F_1,...,F_n)$ to be a biholomorphism $g: \overline X \to \overline Y$ such that
\begin{enumerate}[(i)]
\item $$g(\infty)=\infty;$$
\item $$\lim_{z\to \infty} g(z)/z = 1;$$
\item $$g(E_j)=F_j\mbox{ for each }j \in \{1,...,n\}.$$
\end{enumerate}

The set of isomorphism classes of such objects is denoted by $\mathcal{M}(n)$.\\

There are several families of special domains which represent all non-degenerate $n$-connected domains, see for example \cite[Chapter 15]{CON}. Among these, non-degenerate circle domains offer the advantage that each boundary component is an analytic Jordan curve.

\begin{definition}
A domain $Y \subset \RiemannSphere$ is called a \textit{circle domain} if each component of its complement is a spherical disk of non-negative radius.
\end{definition}

\begin{theorem}[Koebe]
Let $X$ be a non-degenerate $n$-connected domain. Then there exists a biholomorphism $g : X \to Y$ onto a non-degenerate circle domain. If $h : X \to Z$ is another biholomorphism onto a circle domain, then $h=M \circ g$ for some M\"obius transformation $M$.
\end{theorem}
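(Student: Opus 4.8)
The plan is to prove existence by a Koebe-type iteration that rounds one boundary component at a time, and uniqueness by a reflection (rigidity) argument. Throughout I may assume, after an initial application of the Riemann mapping theorem to each complementary component, that $X$ contains $\infty$ and that $\RiemannSphere \setminus X$ has $n$ components $K_1,\dots,K_n$, each a closed topological disk; non-degeneracy guarantees that none of these is a point.

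For existence, the key observation is that for a fixed index $j$, the complement $\Omega_j := \RiemannSphere \setminus K_j$ is a simply connected domain containing $X$. Applying the Riemann mapping theorem to $\Omega_j$ and normalizing the resulting map to be tangent to the identity at infinity yields a univalent map $\phi_j$ sending $\partial K_j$ to a round circle while fixing $\infty$. Restricting $\phi_j$ to $X$ replaces the $j$-th boundary curve by a circle at the cost of distorting the others. I would then iterate this construction cyclically over $j=1,\dots,n$, obtaining a sequence of univalent maps $g_1,g_2,\dots$ on $X$, each tangent to the identity at infinity. The point is that rounding one component can only enlarge the image in an appropriate conformal sense, so some monotone conformal invariant controls the process. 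Using Lemma \ref{lemmeconv} to extract a locally uniformly convergent subsequence and Theorem \ref{thmcara} to identify its limit, I would show that the limit map $g$ is univalent on $X$ and that, because the residual distortion at each component tends to $0$, every boundary component of $g(X)$ is a round circle. This produces the desired biholomorphism onto a circle domain.

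For uniqueness, suppose $g : X \to Y$ and $h : X \to Z$ are two such maps onto circle domains, and set $f := h \circ g^{-1} : Y \to Z$, a biholomorphism between circle domains. Since $f$ extends to a homeomorphism of closures carrying each boundary circle of $Y$ onto a boundary circle of $Z$, the Schwarz reflection principle (reflection in a circle being an anti-holomorphic involution) lets me extend $f$ analytically across every boundary circle. Reflecting repeatedly generates the Schottky reflection groups attached to $Y$ and to $Z$, and the extensions of $f$ conjugate one group to the other, so $f$ defines a single-valued conformal map on the complement of a limit set $\Lambda$. For a finitely connected circle domain $\Lambda$ is totally disconnected and of zero area, hence removable; therefore $f$ extends to a conformal automorphism of $\RiemannSphere$, which must be a M\"obius transformation $M$. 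This gives $h = M \circ g$ as claimed.

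The main obstacle is the convergence of the Koebe iteration in the existence half: one must pin down a conformal invariant that is genuinely monotone under the rounding step and use it, together with the kernel-convergence machinery of Section 2, to guarantee both that the iterates converge and that in the limit all boundary components have simultaneously become circular. The reflection argument for uniqueness is comparatively routine once one knows that the limit set of a classical Schottky group is removable.
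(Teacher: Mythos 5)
First, a framing point: the paper does not prove this statement at all. It is Koebe's classical circle domain theorem, quoted as known (the paper points to Conway, Section 15.7, for this material), so there is no ``paper proof'' to compare against and your argument has to stand on its own. It does not yet, because the existence half has a genuine gap --- one you flag yourself, but which is not a loose end: it is the entire content of the theorem. The cyclic rounding iteration does produce, via Lemma \ref{lemmeconv} and Theorem \ref{thmcara}, a subsequential locally uniform limit $g$ univalent on $X$ and tangent to the identity at infinity; but nothing you have written forces $g(X)$ to be a circle domain. Each rounding step destroys the roundness of the previously rounded components, and without a quantitative estimate showing that the ``roundness defect'' of \emph{every} component tends to zero along the iteration, the limit could a priori be an arbitrary $n$-connected domain. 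Koebe's original proof supplies exactly such an estimate (a distortion inequality giving geometric decay of the defect per cycle); the other standard route avoids the iteration entirely and solves an extremal problem (e.g.\ maximizing the derivative at a marked point over a suitable class of univalent maps, then showing by a variational or reflection argument that the extremal image is bounded by circles). ``Some monotone conformal invariant controls the process'' is a placeholder for whichever of these arguments you choose; as written it is the missing proof, not a proof.

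The uniqueness half is essentially the right argument, but the step ``$\Lambda$ is totally disconnected and of zero area, hence removable'' is too quick: zero Lebesgue area does not by itself imply removability for conformal homeomorphisms --- this is precisely the obstruction that makes the Koebe conjecture hard for infinitely connected domains. For the group generated by reflections in finitely many disjoint circles one shows more, namely that the limit set has \emph{absolute} area zero: the total area of the complementary disks at generation $k$ decays geometrically, since each inversion contracts the disks it is applied to by a factor bounded away from $1$. One then uses this (via a standard area argument, or quasiconformal removability for such sets) to conclude that the extension of $f$, a homeomorphism of $\RiemannSphere$ conformal off $\Lambda$, is a M\"obius transformation. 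With that decay estimate supplied, the conclusion $h = M \circ g$ follows as you describe; without it, the word ``removable'' is doing unjustified work.
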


Given $[(X,E_1,...,E_n)] $ in $\mathcal{M}(n)$, let $g: \overline X \to \overline Y$ be a biholomorphism onto a circle domain. By post-composing with an appropriate M\"obius transformation, we may further assume that $g$ is tangent to the identity at infinity. Then $Y$ is bounded by round circles $g(E_1),..., g(E_n)$ in the plane and $g$ is determined up to translation. Let $c_j$ and $r_j$ be the center and radius of the circle $g(E_j)$ in the euclidean metric on the plane. If we require that $\sum_{j=1}^n c_j = 0$, then $g$ is uniquely determined, and is called the \textit{normalized Koebe representation of $X$}. After this normalization, the map
$$
\begin{array}{rcrcl}
\kappa & : & \mathcal{M}(n) & \to & \ComplexPlane^{n-1} \times (\Reals_+)^n \\
  &   & [(X,E_1,...,E_n)] & \mapsto & ((c_1,...,c_{n-1}),(r_1,...,r_n))
\end{array}
$$
becomes well-defined and injective. Its image is the set of $(c,r) \in \ComplexPlane^{n-1} \times (\Reals_+)^n$ such that the $n$ disks $\{ z \in \ComplexPlane : |z-c_j|\leq r_j \}$ are disjoint, where $c_n:= - \sum_{j=1}^{n-1} c_j$. This is clearly an open set. We define the topology on $\mathcal{M}(n)$ to be the one induced by the map $\kappa$. Thus :

\begin{lemma} \label{dimmoduli}
The moduli space $\mathcal{M}(n)$ is a manifold of dimension $(3n-2)$ with a single chart.
\end{lemma}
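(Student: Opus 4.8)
The plan is to observe that this lemma is essentially a repackaging of the properties of the map $\kappa$ recorded just above its statement, the only genuine point requiring verification being that the image of $\kappa$ is open. First I would note that $\kappa$ is a bijection from $\mathcal{M}(n)$ onto the set
$$
U := \{ (c,r) \in \ComplexPlane^{n-1} \times (\Reals_+)^n : \text{the disks } \{|z-c_j| \leq r_j\} \text{ are disjoint} \},
$$
where $c_n := -\sum_{j=1}^{n-1} c_j$. Injectivity of $\kappa$ was already noted, and surjectivity onto $U$ follows from Koebe's theorem: for any $(c,r) \in U$ the circle domain $\RiemannSphere \setminus \bigcup_{j=1}^n \{|z-c_j| \leq r_j\}$ is its own normalized Koebe representation and is sent by $\kappa$ to exactly $(c,r)$.

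Next I would check that $U$ is open. Two closed disks $\{|z-c_j|\leq r_j\}$ and $\{|z-c_k|\leq r_k\}$ are disjoint precisely when $|c_j - c_k| > r_j + r_k$, a strict inequality between continuous functions of $(c,r)$ and hence an open condition; the same holds for the pairs involving the dependent center $c_n$, which is itself a continuous function of $(c_1,\dots,c_{n-1})$. Since $U$ is the intersection over the finitely many pairs $j<k$ of these conditions, together with the open condition $r_j > 0$, it is open in $\ComplexPlane^{n-1}\times \Reals^n$. Identifying $\ComplexPlane^{n-1}\times \Reals^n$ with $\Reals^{2(n-1)}\times\Reals^n = \Reals^{3n-2}$ then exhibits $U$ as an open subset of Euclidean space of dimension $3n-2$.

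Finally, because the topology on $\mathcal{M}(n)$ was by definition the one induced by $\kappa$, the map $\kappa$ is a homeomorphism onto $U$ by construction. It therefore serves as a single global chart, so $\mathcal{M}(n)$ is a (smooth) manifold of dimension $3n-2$ covered by this one chart; with only one chart in the atlas there are no transition maps to verify, so smoothness is automatic.

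I do not expect any real obstacle here: every ingredient except the openness of $U$ is an immediate consequence of the construction of $\kappa$ and the definition of the topology on $\mathcal{M}(n)$, and the openness reduces to the elementary observation that disjointness of finitely many closed disks is an open constraint on their centers and radii. The dimension count is simply $2(n-1) + n = 3n-2$.
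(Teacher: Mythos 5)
Your proposal is correct and follows essentially the same route as the paper, which states the lemma as an immediate consequence of the preceding construction of $\kappa$ (injectivity, identification of the image, and the declaration that the topology on $\mathcal{M}(n)$ is the one induced by $\kappa$), dismissing the openness of the image as clear. Your only addition is to spell out why disjointness of the closed disks is an open condition via $|c_j-c_k|>r_j+r_k$, which is a harmless elaboration of the paper's ``This is clearly an open set.''
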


\begin{remark}
By successively removing the conditions (iii), (ii) and (i) on isomorphisms, one obtains maps
$$
\mathcal{M}(n) \to \mathcal{M}(n)/\Sigma_n \to \mathcal{M}_{0,n,1} \to \mathcal{M}_{0,n,0},
$$
where $\Sigma_n$ is the symmetric group on $n$ symbols and $\mathcal{M}_{0,n,m}$ is the moduli space of Riemann surfaces of genus $0$ with $n$ disks removed and $m$ marked points. The spaces $\mathcal{M}_{0,n,1}$ and $\mathcal{M}_{0,n,0}$ are more natural to study from the point of view of Teichm\"uller theory, but the spaces $\mathcal{M}(n)$ and $\mathcal{M}(n)/\Sigma_n$ provide the appropriate framework for this paper. Note that one can think of $\mathcal{M}(n)/\Sigma_n$ as the moduli space of non-degenerate $n$-connected domains with one marked tangent vector.
\end{remark}

The following theorem provides a useful criterion to tell when a sequence converges in $\mathcal{M}(n)$ or $\mathcal{M}(n)/\Sigma_n$.

\begin{theorem}
\label{convergencekoebe}
Let $\{X_k\}$ be a sequence of $n$-connected domains containing $\infty$ converging in the sense of Carath\'eodory to an $n$-connected domain $X$. If $g_k : X_k \to Y_k $ are the normalized Koebe representations, then $(g_k)_{k=1}^{\infty}$ converges locally uniformly on $X$ to the normalized Koebe representation $g:X \to Y$. In particular, the sequence of circle domains $\{Y_k\}$ converges to $Y$ in the sense of Carath\'eodory.
\end{theorem}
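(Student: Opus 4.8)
The plan is to use a standard subsequence argument: since locally uniform convergence on the fixed domain $X$ is metrizable, it suffices to show that every subsequence of $(g_k)$ has a further subsequence converging locally uniformly to the normalized Koebe representation $g$ of $X$, and that this limit is independent of the subsequence. First I would fix a subsequence; because $X_k \to X$, the kernel of this subsequence is again $X$, so Lemma \ref{lemmeconv} yields a further subsequence converging locally uniformly on $X$ to a univalent function $h$. To apply the convergence machinery I first need $h$ to be tangent to the identity at infinity, which I would check by passing to $f_k(z) = 1/g_k(1/z)$, univalent near $0$ with $f_k(0)=0$ and $f_k'(0)=1$, and noting that these normalizations survive in the locally uniform limit $f = 1/h(1/\,\cdot\,)$ (the derivative at $0$ converging to $1$). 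Then Theorem \ref{thmcara} immediately gives that the images $Y_k = g_k(X_k)$ converge in the sense of Carath\'eodory to $Y := h(X)$.

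The core of the argument is to identify $Y$ as a non-degenerate circle domain with centers summing to zero, since then $h$ must be the normalized Koebe representation $g$ of $X$. I would first bound the parameters: because $\infty \in Y$, the set $\{|z|\ge M\}\cup\{\infty\}$ lies in $Y$ for $M$ large, and the kernel property forces $\RiemannSphere \setminus Y_k \subset \{|z|<M\}$ for large $k$, so the centers $c_j^{(k)}$ and radii $r_j^{(k)}$ are bounded. Passing to a further subsequence I may assume $c_j^{(k)} \to c_j$ and $r_j^{(k)} \to r_j \ge 0$, with the disjointness inequalities $|c_i^{(k)}-c_j^{(k)}| \ge r_i^{(k)}+r_j^{(k)}$ surviving in the limit. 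A direct check with the definition of the kernel then shows $\RiemannSphere \setminus Y = \bigcup_j \overline{D}_j$, where $\overline{D}_j = \{|z-c_j|\le r_j\}$ are the limit disks: compact subsets of the complement are eventually in every $Y_k$, while interior points of the $\overline{D}_j$ are eventually excluded.

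The hard part is ruling out the two ways the limit could degenerate. To see that no $r_j$ vanishes, I would argue that if $r_j = 0$ then $c_j$ is an isolated point of $\RiemannSphere \setminus Y$; the bounded map $h^{-1}$ then extends across $c_j$ by Riemann's removable singularity theorem, remains injective near $c_j$, and would send $c_j$ to an isolated boundary point of $X$, contradicting the non-degeneracy of $X$. To see that the disks are pairwise disjoint (not merely tangent), I would use that $h$ is a biholomorphism, so $Y$ is homeomorphic to $X$ and hence $\RiemannSphere \setminus Y$ has exactly $n$ components (the number of complementary components being a topological invariant, by Alexander duality); since $\bigcup_{j=1}^n \overline{D}_j$ is a union of $n$ disks with disjoint interiors, having $n$ components forces disjointness. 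With both degeneracies excluded, $Y$ is a non-degenerate circle domain and $\sum_j c_j = \lim_k \sum_j c_j^{(k)} = 0$, so $h = g$. Independence of the subsequence then gives $g_k \to g$ locally uniformly, and $Y_k \to g(X)$ by Theorem \ref{thmcara}.

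I expect the obstacle to be concentrated precisely in the non-degeneracy step: compactness of the parameters and the kernel computation are routine, but the fact that the Carath\'eodory limit of normalized circle domains is again a non-degenerate circle domain of the same connectivity rests on the conformal rigidity of $X$, exploited through the removable singularity argument together with the topological invariance of the number of complementary components.
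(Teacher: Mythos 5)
Your proposal follows essentially the same route as the paper: a subsequence argument via Lemma \ref{lemmeconv} and Theorem \ref{thmcara}, boundedness and subsequential convergence of the centers and radii, identification of the limit domain as the complement of the limit disks, and exclusion of degenerate or tangent disks because $h(X)$ inherits non-degenerate $n$-connectivity from $X$. The only difference is that you spell out (via the removable-singularity and component-counting arguments) what the paper dispatches in one line with ``$h(X)$ is $n$-connected and non-degenerate since $h$ is univalent on $X$.''
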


\begin{proof}
We prove that every subsequence of $( g_k )_{k=1}^\infty$ has a subsequence which converges to $g$, which implies that $( g_k )_{k=1}^\infty$ converges to $g$.
\\

By Lemma \ref{lemmeconv}, every subsequence of $( g_k )_{k=1}^\infty$ has a subsequence converging locally uniformly to a univalent function on $X$.
\\

Let $h$ be a locally uniform limit of a subsequence $(g_k)_{k \in S}$. Then $h$ is tangent to the identity at infinity. By Theorem \ref{thmcara}, the corresponding subsequence of domains $\{Y_k\}_{k \in S}$ converges to $h(X)$ in the sense of Carath\'eodory.
\\

We claim that $h(X)$ is a non-degenerate circle domain bounded by circles whose centers sum to zero, so that $h=g$ by uniqueness of normalized Koebe representations. Indeed, first note that $h(X)$ is $n$-connected and non-degenerate since $h$ is univalent on $X$. Furthermore, the sequences of centers and radii of the circles bounding $\{Y_k\}_{k \in S}$ must be bounded, otherwise the kernel of $\{Y_k\}_{k \in S}$ would not contain a neighborhood of $\infty$. Therefore, passing to a subsequence if necessary, we can assume that the centers of the circles bounding $\{Y_k\}_{k \in S}$ converge to $c_1, c_2, \dots, c_n \in \mathbb{C}$ and that the corresponding radii converge to $r_1, r_2, \dots, r_n \in \mathbb{R}$. Since $Y_k \to h(X)$ as $k \rightarrow \infty$ in $S$, it is easy to see then that $h(X)$ is the domain bounded by the circles centered at $c_1, c_2, \dots, c_n$ of corresponding radii $r_1, r_2, \dots, r_n$. Since $h(X)$ is $n$-connected and non-degenerate, these circles must be disjoint and non-degenerate, so that $h(X)$ is indeed a non-degenerate circle domain. The fact that $\sum_{j=1}^{n} c_j =0$ follows from the fact that this is true for every domain in the sequence $\{Y_k\}$.\\

Finally, the fact that $Y_k \to Y$ follows directly from Theorem \ref{thmcara}.
\end{proof}

\begin{remark}
A similar argument shows that a sequence of $n$-connected circle domains $\{Y_k\}$ converges to a $n$-connected circle domain $Y$ if and only if the sequences of centers and radii bounding $\{Y_k\}$ converge to the centers and radii of the circles bounding $Y$.
\end{remark}

\section{Rational maps}
\label{rational}
First, we recall a simple lemma whose proof appears in \cite{FBY} :

\begin{lemma} \label{nconnected}
Let $R$ be a rational map of degree $n$ and $X:=R^{-1}(\UnitDisk)$. Then the following are equivalent :
\begin{enumerate}[(1)]
\item $\RiemannSphere \setminus X$ has $n$ connected components;
\item $X$ is $n$-connected;
\item $X$ is $n$-connected and non-degenerate;
\item $X$ is connected and bounded by $n$ disjoint analytic Jordan curves;
\item $R$ maps each component of $\RiemannSphere \setminus X$ homeomorphically onto $\RiemannSphere \setminus \UnitDisk$;
\item all the critical values of $R$ are in $\UnitDisk$.
\end{enumerate}
\end{lemma}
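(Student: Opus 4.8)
The plan is to prove the cyclic chain of implications
$(6) \Rightarrow (5) \Rightarrow (4) \Rightarrow (3) \Rightarrow (2) \Rightarrow (1) \Rightarrow (6)$,
in which the two substantive arrows are $(6)\Rightarrow(5)$ and $(1)\Rightarrow(6)$; the intermediate ones are topological bookkeeping. Throughout I write $\Omega := \RiemannSphere \setminus \overline{\UnitDisk}$ for the open exterior of the unit disk, so that $\overline\Omega = \RiemannSphere \setminus \UnitDisk$ is a closed topological disk, hence simply connected. The two basic identities I will use constantly are $\RiemannSphere \setminus X = R^{-1}(\overline\Omega)$ and $\partial X = R^{-1}(\UnitCircle)$. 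Note that condition $(6)$ says precisely that $R$ has no critical value in $\overline\Omega$, i.e.\ that the restriction of $R$ over $\overline\Omega$ is unbranched.

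For $(6)\Rightarrow(5)$ I would argue by covering space theory. If $R$ has no critical value in $\overline\Omega$, then $R$ restricted to the compact set $R^{-1}(\overline\Omega)$ is a proper local homeomorphism onto $\overline\Omega$, and such a map onto $\overline\Omega$ is automatically a covering map. Since $\overline\Omega$ is simply connected, this covering is trivial, so $R^{-1}(\overline\Omega) = \RiemannSphere \setminus X$ splits into exactly $\deg R = n$ components, each mapped homeomorphically onto $\overline\Omega = \RiemannSphere \setminus \UnitDisk$; this is $(5)$. For $(5)\Rightarrow(4)$: a regular value in $\Omega$ has one preimage in each component and $n$ in total, so there are exactly $n$ components $K_1,\dots,K_n$; each $K_i$ is a closed Jordan domain whose boundary $R^{-1}(\UnitCircle)\cap K_i$ is an analytic Jordan curve (the derivative of $R$ does not vanish there, and $\UnitCircle$ is real-analytic, so one may invoke Schwarz reflection), and $X$ is the complement of these $n$ disjoint closed Jordan domains, hence a connected domain bounded by $n$ disjoint analytic Jordan curves. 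Finally $(4)\Rightarrow(3)\Rightarrow(2)\Rightarrow(1)$ are immediate from the definition of a (non-degenerate) $n$-connected domain, since the $n$ complementary Jordan domains are the $n$ components of $\RiemannSphere\setminus X$ and none is a point.

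The crux is $(1)\Rightarrow(6)$. Assume $\RiemannSphere \setminus X = R^{-1}(\overline\Omega)$ has exactly $n$ components $K_1,\dots,K_n$. First I would show that each $K_i$ surjects onto $\overline\Omega$: the set $R(K_i)$ is compact and connected, and using the open mapping theorem together with compactness of $K_i$ one checks that $R(K_i)\cap\Omega$ is nonempty, open and closed in the connected set $\Omega$, hence equals $\Omega$, so $R(K_i)=\overline\Omega$. (Nonemptiness uses that $R^{-1}(\UnitCircle)$ is nowhere dense and that every one of its points is a limit of points of $R^{-1}(\Omega)$ lying in the same component, so each $K_i$ has nonempty interior.) Now fix a regular value $w_0\in\Omega$ and set $d_i := \#\bigl(R^{-1}(w_0)\cap K_i\bigr)$. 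Surjectivity gives $d_i\geq 1$, while counting preimages of the regular value $w_0$ gives $\sum_{i=1}^n d_i = \deg R = n$. With $n$ summands each at least $1$ summing to $n$, every $d_i$ equals $1$. To finish I would rule out critical points: a critical point $z_0\in K_i$ of local degree $k\geq 2$ with $R(z_0)\in\overline\Omega$ produces, for a regular value $w$ near $R(z_0)$, exactly $k$ distinct preimages near $z_0$, each lying in a local petal of $R^{-1}(\Omega)$ whose closure contains $z_0$ and which is therefore contained in $K_i$; this would force $d_i\geq k\geq 2$, a contradiction. Hence $R$ has no critical point in $R^{-1}(\overline\Omega)$, i.e.\ no critical value in $\overline\Omega$, which is exactly $(6)$.

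I expect the main obstacle to be concentrated in $(1)\Rightarrow(6)$, and specifically in handling critical points that sit on the common boundary $R^{-1}(\UnitCircle)$ separating $X$ from $R^{-1}(\Omega)$. There the local branched-cover model must be used to confirm two things: that the $k$ nearby preimages of a regular value genuinely land in the single component $K_i$ containing the branch point (rather than being split among different components), and, relatedly, that each $K_i$ covers all of $\overline\Omega$ rather than a proper subdisk. By contrast the direction $(6)\Rightarrow(5)$ is essentially free, as simple connectivity of $\overline\Omega$ does all the work, and the homeomorphism and Jordan-curve statements in $(5)$ and $(4)$ reduce to the Schwarz reflection principle already recalled in Section~3.
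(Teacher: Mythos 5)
The paper does not actually contain a proof of this lemma: it is stated as a recollection, with the proof deferred to the companion paper \cite{FBY}, so there is no in-paper argument to compare yours against. Judged on its own terms, your proof is correct, and the two implications carrying the real content are sound. In $(6)\Rightarrow(5)$, the restriction of $R$ to the compact set $R^{-1}(\RiemannSphere\setminus\UnitDisk)$ is indeed a proper local homeomorphism onto the closed disk $\RiemannSphere\setminus\UnitDisk$ when no critical value lies there, hence a trivial $n$-sheeted covering. In $(1)\Rightarrow(6)$, the open-and-closed argument for surjectivity of each component, the count $\sum_i d_i=n$ with each $d_i\geq 1$, and the petal analysis at a critical point on $R^{-1}(\UnitCircle)$ are all correct; you rightly isolated the only delicate point, namely that the $k$ local sectors of $R^{-1}(\RiemannSphere\setminus\overline{\UnitDisk})$ attached to such a critical point all lie in the component containing it, which the local model $\zeta\mapsto\zeta^k$ settles since each sector has the critical point in its closure.

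Two things to tighten. First, in the final contradiction of $(1)\Rightarrow(6)$ you compare the $k$ preimages of a regular value $w$ near $R(z_0)$ against $d_i$, which you defined using a different, fixed regular value $w_0$; this is harmless because your counting argument in fact yields $\#\bigl(R^{-1}(w)\cap K_i\bigr)=1$ for \emph{every} regular value $w$ outside the closed unit disk, but you should say so explicitly. Second, the arrows $(4)\Rightarrow(3)\Rightarrow(2)\Rightarrow(1)$ are not purely definitional: $(4)\Rightarrow(1)$ requires the plane-topology fact that a connected open subset of the sphere whose boundary is a disjoint union of $n$ Jordan curves has a complement with exactly $n$ components, and, relatedly, the connectivity of $X$ asserted in $(5)\Rightarrow(4)$ is the statement that the complement in the sphere of $n$ disjoint closed Jordan domains is connected. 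Both are standard consequences of the Jordan--Schoenflies theorem (or Alexander duality), but your justification that ``the $n$ complementary Jordan domains are the $n$ components'' imports information established under hypothesis $(5)$ rather than from $(4)$ alone. Either cite the topological fact or restructure the cycle, for instance by proving $(5)\Rightarrow(1)$ directly (which is immediate, since a regular value exterior to the closed disk has one preimage per component and $n$ in all) and keeping $(3)\Rightarrow(2)\Rightarrow(1)$ as the genuinely definitional arrows.
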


A rational map of degree $n$ which satisfies any (and hence all) of the conditions of Lemma \ref{nconnected} is called \textit{$n$-good}. Note that an $n$-good rational map must have only simple poles, since it cannot have $\infty$ as a critical value. An $n$-good map which vanishes at infinity and whose poles add up to zero is said to be \textit{normalized}. By definition, $\mathcal{R}(n)$ is the set of $(a,b)\in \ComplexPlane^n \times \ComplexPlane^n$ such that the rational map
$$
R_{a,b}(z) := \sum_{j=1}^n \frac{a_j}{z-b_j}
$$
is a normalized $n$-good map. Every normalized $n$-good map can be written as a sum of partial fractions and is thus equal to $R_{a,b}$ for some $(a,b) \in \mathcal{R}(n)$, unique up to permutation. Therefore, the set of normalized $n$-good maps can be identified with $\mathcal{R}(n) / \Sigma_n$.\\

By definition, if $(a,b) \in \mathcal{R}(n)$, then $\sum_{j=1}^n b_j = 0$. It follows that the map
$$
\begin{array}{rcrcl}
\theta & : & \mathcal{R}(n) & \to & \ComplexPlane^{n} \times \ComplexPlane^{n-1} \\
  &   & (a,b) & \mapsto & (a,(b_1,...,b_{n-1})).
\end{array}
$$
 is injective. Its image $\theta(\mathcal{R}(n))$ is  the set of $(a,\beta) \in  \ComplexPlane^n \times \ComplexPlane^{n-1}$ such that $R_{a,b}$  is $n$-good, where $b=(\beta,-\sum_{j=1}^{n-1} \beta_j)$. This image is open since the set of critical values of $R_{a,b}$ depends continuously on $(a,\beta)$ and $\UnitDisk$ is open. Thus :

\begin{lemma}\label{dimrational}
The space $\mathcal{R}(n)$ is a manifold of dimension $(4n-2)$ with a single chart.
\end{lemma}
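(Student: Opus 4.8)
The plan is to exhibit $\mathcal{R}(n)$ as an open subset of the $(4n-2)$-dimensional manifold $\ComplexPlane^n \times \ComplexPlane^{n-1} \cong \Reals^{4n-2}$ via the map $\theta$, and then invoke the fact that an open subset of a manifold inherits a single-chart manifold structure. Concretely, I would first verify that $\theta$ is injective, which is immediate from the constraint $\sum_{j=1}^n b_j = 0$: the last coordinate $b_n = -\sum_{j=1}^{n-1}\beta_j$ is recovered from the first $n-1$ poles, so $(a,b)\mapsto (a,(b_1,\dots,b_{n-1}))$ loses no information.

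The substantive step is to show that the image $\theta(\mathcal{R}(n)) \subseteq \ComplexPlane^n \times \ComplexPlane^{n-1}$ is open. By the characterization in Lemma \ref{nconnected}, a parameter $(a,\beta)$ (with $b=(\beta, -\sum_{j=1}^{n-1}\beta_j)$) lies in $\theta(\mathcal{R}(n))$ precisely when $R_{a,b}$ is $n$-good, which by condition (6) of Lemma \ref{nconnected} is equivalent to all critical values of $R_{a,b}$ lying in the open unit disk $\UnitDisk$. The key observation is that the critical points of $R_{a,b}$ are the roots of a polynomial whose coefficients depend continuously (indeed polynomially) on $(a,b)$, and hence the critical values depend continuously on $(a,\beta)$; since $\UnitDisk$ is open, the condition ``all critical values lie in $\UnitDisk$'' is an open condition on the parameters. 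Thus $\theta(\mathcal{R}(n))$ is open in $\ComplexPlane^n \times \ComplexPlane^{n-1}$.

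The main obstacle to make fully rigorous is the claim that the critical values vary continuously with the parameters, because the number of critical points and their multiplicities can jump as $(a,b)$ varies; one must phrase continuity in terms of the multiset of critical values, or equivalently observe that the image of the critical set under $R_{a,b}$ depends continuously in the Hausdorff sense. However, since we only assume earlier in the text that this continuity holds (the paragraph preceding the statement asserts it), I may take it as given: the set of critical values of $R_{a,b}$ depends continuously on $(a,\beta)$, so its containment in the open set $\UnitDisk$ is stable under small perturbations.

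Finally, having established that $\theta$ is an injective map onto the open set $\theta(\mathcal{R}(n))$, I would conclude that $\theta$ furnishes a single global chart identifying $\mathcal{R}(n)$ with an open subset of $\ComplexPlane^n \times \ComplexPlane^{n-1}$. Since $\ComplexPlane^n \times \ComplexPlane^{n-1}$ has real dimension $2n + 2(n-1) = 4n-2$, the space $\mathcal{R}(n)$ is a real manifold of dimension $(4n-2)$ covered by the single chart $\theta$, exactly as asserted. The entire argument is a direct parallel to the proof of Lemma \ref{dimmoduli} for $\mathcal{M}(n)$, with the open-condition on critical values playing the role that the disjointness-of-disks condition played there.
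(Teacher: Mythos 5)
Your proposal matches the paper's argument exactly: the paper establishes this lemma by observing that $\theta$ is injective (since $b_n$ is determined by $\sum_j b_j = 0$) and that its image is open because the critical values of $R_{a,b}$ depend continuously on the parameters and $\UnitDisk$ is open. Your additional remark about making the continuity of the multiset of critical values precise is a fair point, but the paper takes the same shortcut, so the two arguments are essentially identical.
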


For the quotient topology on the set of normalized $n$-good maps $\mathcal{R}(n)/\Sigma_n$, a sequence $(R_k)_{k=1}^\infty$ converges to $R$ if there exist $(a^k,b^k)$ and $(a,b)$ in $\mathcal{R}(n)$ such that $R_{a^k,b^k}=R_k$, $R_{a,b}=R$ and $(a^k,b^k) \to (a,b)$. It is easy to see that this implies that $(R_k)_{k=1}^\infty$ converges to $R$ uniformly on $\RiemannSphere$ with respect to the spherical metric. The converse is also true.

\begin{lemma}
\label{LemmaRational}
Let $(R_k)_{k=1}^{\infty}$ be a sequence of rational functions of degree $n$, each vanishing at infinity. Suppose that $(R_k)_{k=1}^{\infty}$ converges locally uniformly on some open set $U$ to a function $Q$ holomorphic on $U$. Then $Q$ is a rational function of degree at most $n$. If $Q$ has degree exactly $n$, then $R_k \to Q$ spherically uniformly on $\RiemannSphere$. Moreover, $Q$ vanishes at infinity and the poles and residues of $R_k$ converge to the poles and residues of $Q$.
\end{lemma}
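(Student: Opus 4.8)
The plan is to write each $R_k$ explicitly in the partial-fraction form dictated by the hypotheses and then extract convergent subsequences of the poles and residues, using a normal families argument to pin down the limit. Since each $R_k$ has degree $n$ and vanishes at infinity, it has exactly $n$ poles (counted with multiplicity) in $\ComplexPlane$ and can be written as $R_k = p_k/q_k$ where $q_k$ is monic of degree $n$ and $\deg p_k \leq n-1$. The first step is to normalize: divide numerator and denominator so that $q_k(z) = \prod_{j=1}^n (z - b_j^k)$ and $p_k$ has bounded-degree coefficients. I would like to assert that the poles $b_j^k$ and the leading coefficients of $p_k$ lie in a compact set, but this is exactly what may fail in general, so instead I will argue after passing to subsequences and distinguish cases according to whether poles escape to infinity.

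Here is the key structural step. Pass to a subsequence so that each pole sequence $(b_j^k)_k$ converges in $\RiemannSphere$ to some $b_j \in \RiemannSphere$ (some of these limits may coincide or be $\infty$). On the open set $U$, the functions $R_k$ converge locally uniformly to $Q$, which is holomorphic on $U$; in particular $U$ contains no accumulation point of poles of the $R_k$, so the limiting poles $b_j$ lie in $\RiemannSphere \setminus U$. I would then show that, away from the finite set $\{b_1,\dots,b_n\}$, the sequence $(R_k)$ is uniformly bounded on compact sets: writing $R_k(z) = \sum_j a_j^k/(z-b_j^k)$ and controlling each summand, local boundedness off the limiting poles follows once the residues $a_j^k$ are known to be bounded. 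The residues are recovered by contour integration, $a_j^k = \frac{1}{2\pi i}\oint R_k\, dz$ around a small fixed circle enclosing only $b_j^k$; because $R_k \to Q$ uniformly on such a circle (for $k$ large, by local uniform convergence on the region where no poles accumulate), the residues $a_j^k$ converge to the corresponding residues of $Q$, and in particular remain bounded. This simultaneously identifies $Q$, off its poles, as $\sum_j a_j/(z-b_j)$ with the convention that a summand drops out when $b_j = \infty$ (the corresponding residue tending to the contribution that vanishes at infinity), proving $Q$ is rational of degree at most $n$ and vanishes at infinity.

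The degree-exactly-$n$ case is then immediate: if $\deg Q = n$ then all $n$ poles of $Q$ are genuine, none of the limits $b_j$ is $\infty$, and none of the residues $a_j$ vanishes, so the normalizations do not degenerate. In that situation the convergence $R_k \to Q$ can be upgraded to spherical uniform convergence on all of $\RiemannSphere$ by the following standard argument: on compact subsets avoiding the poles we already have uniform convergence, and near each pole $b_j$ both $R_k$ and $Q$ are large, so $1/R_k \to 1/Q$ uniformly on a neighborhood of $b_j$ by the same contour-integral control on the local principal parts; combining the two regions and using that the spherical metric is comparable to the Euclidean metric where the functions are bounded and to the Euclidean metric in the $w \mapsto 1/w$ chart where they are large yields spherical uniform convergence. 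Finally, a subsequence-of-every-subsequence argument removes the passage to subsequences: the limits of poles and residues are forced to be those of $Q$ regardless of the subsequence, so the full sequences converge.

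The main obstacle I anticipate is the bookkeeping when poles escape to infinity or collide. I expect the heart of the matter to be the contour-integral extraction of residues together with the verification that escaping poles contribute nothing to the limit, and in the degree-$n$ case the nontrivial point is to confirm that no degeneration occurs so that the spherical convergence argument near the poles goes through; the two-chart comparison for the spherical metric is the technical step that needs care but is routine.
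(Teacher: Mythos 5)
Your argument has a genuine circularity at its core. The only convergence you are given is locally uniform convergence of $R_k$ to $Q$ on $U$; the limiting poles $b_j$ lie in $\RiemannSphere\setminus U$, so the small circles around them on which you propose to integrate need not meet $U$ at all. You justify the convergence of $\frac{1}{2\pi i}\oint R_k\,dz$ on those circles ``by local uniform convergence on the region where no poles accumulate,'' but no such convergence is known there: absence of poles on a compact set does not give boundedness, let alone convergence (consider $R_k(z)=k/(z-b)$). Meanwhile your route to boundedness of $R_k$ off the limiting poles is ``once the residues are known to be bounded'' --- i.e., each of the two steps presupposes the other. The whole difficulty of the lemma is precisely to propagate convergence from $U$ to the rest of the sphere, and the paper does this by a device your sketch has no substitute for: write $R_k=p_k/q_k$, normalize $q_k$ by $\|q_k\|_{\infty,D}=1$ on a closed disk $D\subset U$ together with a positivity condition at a point of $D$, and use compactness of the unit ball of the finite-dimensional space of polynomials of degree at most $n$ (plus equivalence of norms) to get $q_k\to q$ and $p_k=R_kq_k\to Qq$ on $D$ and hence on all of $\ComplexPlane$. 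Only after that do Rouch\'e and contour integration legitimately yield convergence of poles and residues.

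Two further points. First, the degenerate bookkeeping you defer as ``routine'' is not: an escaping pole can contribute a nonzero constant to the limit ($k/(z-k)\to -1$ locally uniformly on $\ComplexPlane$, and the limit does not vanish at infinity), so your identification $Q=\sum_j a_j/(z-b_j)$ ``with summands dropping out when $b_j=\infty$'' is false in general; and when poles collide the individual residues can blow up even though the degree is preserved ($k/z-k/(z-1/k)\to -1/z^2$), so the nondegeneration you assert in the degree-$n$ case needs the separation of the poles of $Q$ as input, not as output. Second, your partial-fraction ansatz $R_k=\sum_j a_j^k/(z-b_j^k)$ silently assumes all poles of $R_k$ are simple, which is not part of the hypothesis; the paper's polynomial formulation avoids this assumption.
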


\begin{proof}
Let us prove first that $Q$ is a rational function of degree at most $n$. Write $R_k=p_k/q_k$, where $p_k$ and $q_k$ are polynomials of degree at most $n$. Let $D$ be a closed disk contained in $U$ and fix a point $z_0$ in $D$ which is not a zero of any $q_k$. Multiplying $p_k$ and $q_k$ by the same constant $\lambda_k$ if necessary, we can assume that $\|q_k\|_{\infty,D} =1$ and that $q_k(z_0) > 0$ for all $k$. Then the $q_k$'s are in the closed unit ball of the finite-dimensional vector space of polynomials of degree at most $n$, which is compact. Consequently, there exists a subsequence $(q_{k_l})_{l=1}^{\infty}$ such that $q_{k_l} \to q$ uniformly on $D$, where $q$ is a polynomial of degree at most $n$. Moreover, we have that $\|q\|_{\infty,D}=1$ and $q(z_0) \geq 0$. Now, let $p:=Qq$. Since $p_k=R_k q_k$, $R_k \to Q$ and $q_{k_l} \to q$ uniformly on $D$, we get that $p_{k_l} \rightarrow p$ uniformly on $D$. By compactness, $p$ is a polynomial of degree at most $n$, as is every $p_k$. Hence we obtain that $Q$ is equal to the rational function $p/q$, which is of degree at most $n$.
\\

Assume now that the degree of $Q$ is exactly $n$. Let us prove first that $q_k \to q$ locally uniformly on $\mathbb{C}$ by showing that every subsequence of $(q_k)_{k=1}^{\infty}$ has a subsequence which converges locally uniformly to $q$. By the same argument as in the first part of the proof, every subsequence of $(q_k)_{k=1}^{\infty}$ has a subsequence which converges uniformly on $D$. Let $\tilde{q}$ be the uniform limit of a subsequence $(q_k)_{k \in S}$. Then $\tilde{q}$ is a polynomial of degree at most $n$, $\|\tilde{q}\|_{\infty,D}=1$ and $\tilde{q}(z_0) \geq 0$. As in the first part of the proof again, if $\tilde{p}:=Q \tilde{q}$, then the subsequence $(p_k)_{k \in S}$ converges to $\tilde{p}$ uniformly on $D$. Hence we have two polynomials of degree at most $n$, $\tilde{p}$ and $\tilde{q}$, such that $Q= \tilde{p}/ \tilde{q} = p/q$. The polynomials $\tilde{p}$ and $\tilde{q}$, as well as $p$ and $q$, cannot have a common factor since $Q$ has degree $n$. It follows that $\tilde{p}$ has the same zeros as $p$ and $\tilde{q}$ has the same zeros as $q$. There is thus a constant $\lambda$ such that $\tilde{p} = \lambda p$ and $\tilde{q} = \lambda q$. Since $\|\tilde{q}\|_{\infty,D}=\|q\|_{\infty,D}=1$, $\lambda$ is unimodular. As $Q$ is assumed to be holomorphic on $D$, we have $q(z_0)>0$ and $\tilde{q}(z_0)>0$, and hence $\lambda=1$. Therefore, the subsequence $(q_k)_{k \in S}$ converges to $q$ uniformly on $D$, and thus locally uniformly on all of $\mathbb{C}$, since all norms on a finite-dimensional vector space are equivalent.
\\

Now, since $p_k = R_k q_k$, $p=Q q$ and $R_k \to Q$, $q_k \to q$ on $D$, we get that $p_k \to p$ uniformly on $D$. Again by equivalence of norms, $p_k \to p$ locally uniformly on $\mathbb{C}$. One verifies easily that this implies that $R_k = p_k / q_k$ converges to $Q = p/q$ locally uniformly on $\ComplexPlane$ with respect to the spherical metric.
\\

Since $R_k$ vanishes at infinity, $q_k$ has degree $n$ and $p_k$ has degree at most $(n-1)$. Therefore, $p$ has degree at most $(n-1)$. The degree of $q$ is thus $n$, so that $Q=p/q$ vanishes at infinity.\\

Since $q_k \to q$ locally uniformly on $\mathbb{C}$, it follows from Rouch\'e's theorem that the zeros of $q_k$ converge to the zeros of $q$ as $k \to \infty$. Equivalently, the poles of $R_k$ converge to the poles of $Q$. Integrating $R_k$ on a small circle surrounding any pole of $Q$ and letting $k \to \infty$ shows that the residues of $R_k$ must converge to the residues of $Q$.\\

Let $B$ be a closed disk centered at $\infty$ on which $Q$ has no poles. When $k$ is large enough, $R_k$ has no poles on $B$. Since $R_k$ converges to $Q$ uniformly on $\partial B$, the maximum principle implies that convergence is uniform on $B$. Thus $R_k$ converges to $Q$ locally uniformly on all of $\RiemannSphere$. Since $\RiemannSphere$ is compact, convergence is uniform.

\end{proof}

For each $(a,b)\in\mathcal{R}(n)$, the domain $ R_{a,b}^{-1}(\UnitDisk)$ contains $\infty$ and is bounded by $n$ disjoint analytic Jordan curves by definition. Since there is exactly one pole of $R_{a,b}$ in each component of $\RiemannSphere \setminus R_{a,b}^{-1}(\UnitDisk)$, the ordering $(b_1,...,b_n)$ induces an ordering $(F_1,...,F_n)$ of the components of $\partial R_{a,b}^{-1}(\UnitDisk)$. This gives a map
$$
\begin{array}{rcrcl}
P &:& \mathcal{R}(n) &\to& \mathcal{M}(n)\\
  & &  (a,b) & \mapsto & [(R_{a,b}^{-1}(\UnitDisk),F_1,...,F_n)].
\end{array}
$$

As mentioned earlier, if $(a^k,b^k)$ converges to $(a,b)$, then $R_{a^k,b^k}$ converges spherically uniformly to $R_{a,b}$. An easy consequence is that the domains $R_{a^k,b^k}^{-1}(\UnitDisk)$ converge to $R_{a,b}^{-1}(\UnitDisk)$ in the sense of Carath\'eodory. It then follows from Theorem \ref{convergencekoebe} and the remark after it that $P(a^k,b^k)$ converges to $P(a,b)$ in $\mathcal{M}(n)$ and hence $P$ is continuous. \\

If $P$ is smooth and regular, as is claimed without proof in \cite{JEONG2}, then the inverse image $P^{-1}(\sigma)$ of every point $\sigma \in \mathcal{M}(n)$ is a smooth manifold of dimension $$
n= \dim \mathcal{R}(n) - \dim \mathcal{M}(n).$$ We do not know how to prove that $P$ is smooth, but we prove in section 8 that $P^{-1}(\sigma)$ is homeomorphic to the $n$-dimensional torus $\UnitCircle^n$ for every $\sigma \in \mathcal{M}(n)$.\\

In particular, $P$ is surjective. This means that for any $[(X,E_1,...,E_n)] \in \mathcal{M}(n)$, we can find $(a,b) \in \mathcal{R}(n)$ such that there is an isomorphism
$$
g : (X,E_1,...,E_n) \to (R_{a,b}^{-1}(\UnitDisk),F_1,...,F_n).
$$
If we have this, then the composition $R_{a,b} \circ g : X \to \UnitDisk$ is a proper holomorphic map of degree $n$ vanishing at infinity. Following \cite{BELL}, we call such a map a \textit{Grunsky map}. In section 5, we will see that every Grunsky map on $X$ arises in this way uniquely. In section 6, we will see that the set of Grunsky maps on $X$ is in bijection with the cartesian product of its boundary curves $E_1 \times \cdots \times E_n$.

\section{Bell representations}

Let $X$ be a non-degenerate $n$-connected planar domain containing infinity. Recall that a proper holomorphic map $f : X \to \UnitDisk$ of degree $n$ with $f(\infty)=0$ is called a Grunsky map.\\

The existence part of the following theorem was first proved in \cite{JEONG}. The uniqueness part was proved in \cite{FBY}.
\begin{theorem} \label{normalizedBellrep}
Let $X$ be a non-degenerate $n$-connected planar domain containing infinity and let $f$ be a Grunsky map on $X$. Then there exists a unique normalized $n$-good map $R\in \mathcal{R}(n)/\Sigma_n$ and a unique biholomorphism $g : X \to R^{-1}(\UnitDisk)$ tangent to the identity at infinity
such that $f=R\circ g$.
\end{theorem}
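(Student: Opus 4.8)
The plan is to realize $X$ as a branched cover by reflecting the Grunsky map $f$ across the boundary and then recognizing the resulting surface as the sphere. First I would reduce to the case where $X$ is bounded by analytic Jordan curves: by the remarks in Section 3 there is a biholomorphism $\varphi$ from $X$ onto such a domain $X'$, which I may normalize to be tangent to the identity at infinity by post-composing with an affine map. Since $f\circ\varphi^{-1}$ is again a Grunsky map on $X'$ and $g\mapsto g\circ\varphi$ is a bijection between solutions over $X'$ and over $X$, it suffices to treat the analytic-boundary case. There $f$ extends analytically across each boundary curve $E_j$ by the Schwarz reflection principle (both $E_j$ and $\UnitCircle$ being analytic), and the argument principle shows that $f$ maps each $E_j$ homeomorphically onto $\UnitCircle$: each of the $n$ boundary components contributes a positive integer to the total boundary degree $n$, so every contribution equals $1$.

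Next I would build a compact surface $\hat Y$ by gluing $n$ copies $\tilde D_1,\dots,\tilde D_n$ of the closed exterior disk $\overline{\RiemannSphere\setminus\UnitDisk}$ to $\overline X$, attaching $\partial\tilde D_j=\UnitCircle$ to $E_j$ via $(f|_{E_j})^{-1}$, and give $\hat Y$ a complex structure by using the analytically extended $f$ as a coordinate across each seam $E_j$ (on the $X$ side the chart is $w=f$, on the $\tilde D_j$ side the identity coordinate, and they agree along $\UnitCircle$). The tautological map $\hat f$, equal to $f$ on $X$ and to the inclusion on each $\tilde D_j$, is then holomorphic of degree $n$, and inclusion--exclusion gives $\chi(\hat Y)=(2-n)+n-0=2$, so $\hat Y$ is a genus-zero compact Riemann surface, hence biholomorphic to $\RiemannSphere$. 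Fixing a biholomorphism $\Phi:\hat Y\to\RiemannSphere$ and setting $R_0=\hat f\circ\Phi^{-1}$ yields a rational map of degree $n$ whose only branching is that of $f$ inside $X$; since the critical values of $f$ lie in $\UnitDisk$, the map $R_0$ is $n$-good, and its $n$ simple poles are the $\Phi$-images of the centres of the $\tilde D_j$. The biholomorphisms $\hat Y\to\RiemannSphere$ form a $\mathrm{PSL}_2(\ComplexPlane)$-torsor; imposing $\Phi(\infty)=\infty$ and leading coefficient $1$ makes $g:=\Phi|_X$ tangent to the identity and leaves exactly the translations $\Phi\mapsto\Phi+t$ free, under which $R_0$ becomes $R_0(\cdot-t)$ and every pole shifts by $t$. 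Thus a unique $t$ makes the poles sum to zero, producing a normalized $R\in\mathcal{R}(n)/\Sigma_n$ and a tangent-to-identity $g$ with $R\circ g=f$.

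For uniqueness I would show that every solution is a biholomorphism $\hat Y\to\RiemannSphere$ in disguise. Given any solution $(R,g)$, the complementary components of $R^{-1}(\UnitDisk)$ are $n$ disks $D_j$ each mapped homeomorphically onto $\RiemannSphere\setminus\overline{\UnitDisk}$ by $R$ (Lemma \ref{nconnected}), so I can define $\Psi:\hat Y\to\RiemannSphere$ by $\Psi=g$ on $X$ and $\Psi=(R|_{D_j})^{-1}$ on $\tilde D_j$. Because $R$ has no critical points on $\partial R^{-1}(\UnitDisk)$, the relation $R\circ\Psi=\hat f$ forces $\Psi=R^{-1}\circ\hat f$ near each seam, so $\Psi$ is a biholomorphism. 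Two solutions then yield $\Psi_1,\Psi_2$ with $\Psi_2\circ\Psi_1^{-1}$ a Möbius transformation that is tangent to the identity at infinity (both restrict to tangent-to-identity maps on $X$), i.e. a translation $z\mapsto z+\gamma$; comparing pole sums, which must both vanish, forces $\gamma=0$, so the two solutions coincide.

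The main obstacle, and the step deserving the most care, is the construction of $\hat Y$ as an honest Riemann surface: one must verify that the $f$-charts across the seams are holomorphically compatible (this is precisely where analyticity of the boundary and the Schwarz extension of $f$ are used) and that the glued object is genuinely the sphere rather than merely a topological one. The second delicate point is the normalization bookkeeping in the existence step---checking that ``tangent to the identity'' leaves precisely the translations free and that $\sum_j b_j=0$ removes exactly this freedom---since this is what upgrades existence to uniqueness.
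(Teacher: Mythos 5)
Your argument is correct, but note that the paper itself gives no proof of this theorem: it simply cites Jeong--Taniguchi \cite{JEONG} for existence and the authors' earlier paper \cite{FBY} for uniqueness. What you have written is a self-contained version of the classical construction underlying those references: complete the proper map $f$ to a branched cover of the sphere by capping off each boundary curve with an exterior disk glued via $(f|_{E_j})^{-1}$, observe that the resulting compact surface has genus zero, and read off the rational map; uniqueness then reduces to the fact that the only M\"obius transformation tangent to the identity at infinity and compatible with the pole-sum normalization is the identity. All the key steps check out: the boundary degree count forcing $f|_{E_j}$ to be a homeomorphism (which also forces $f'\neq 0$ on $E_j$, needed for your seam charts), the Euler characteristic computation, the identification of the residual normalization freedom with translations, and the cancellation $n\gamma=0$ at the end. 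Two small points worth making explicit if you write this up: (i) in the uniqueness step you should first verify that the two definitions of $\Psi$ agree pointwise on each seam (they do, because $R$ restricted to $\overline{D_j}$ is injective and $R\circ g=f$ there), before invoking the local branches of $R^{-1}$ to get holomorphy; and (ii) when you say $R_0$ is $n$-good because its critical values lie in $\UnitDisk$, you are using the equivalence (6)$\Rightarrow$(4) of Lemma \ref{nconnected}, which deserves a citation at that point. Neither is a gap, just bookkeeping.
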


The pair $(R,g)$ in the above theorem is called the \textit{Bell representation of $X$ associated to $f$}. We now prove that Bell representations depend continuously on the input.

\begin{theorem}
\label{thmconv}
Let $X, X_k$ be non-degenerate $n$-connected planar domains each containing infinity. Let $f$ and $f_k$ be Grunsky maps on $X$ and $X_k$ respectively. Suppose that $X_k \to X$ and $f_k \to f$ locally uniformly on $X$. Write $f=R\circ g$ and $f_k=R_k \circ g_k$, where $R,R_k \in \mathcal{R}(n)/\Sigma_n$ and $g,g_k$ are biholomorphisms tangent to the identity at infinity. Then $g_k \to g$ locally uniformly on $X$ and $R_k \to R$ spherically uniformly on $\RiemannSphere$.
\end{theorem}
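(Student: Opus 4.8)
The plan is to establish both convergences at once via the subsequence principle: it suffices to show that every subsequence of $(g_k)$ has a further subsequence along which $g_k \to g$ locally uniformly and $R_k \to R$ spherically uniformly. So fix a subsequence. Since each $g_k$ is univalent on $X_k$, tangent to the identity at infinity, and $\ker\{X_k\} = X$, Lemma \ref{lemmeconv} lets me extract a further subsequence $(g_{k_\ell})$ converging locally uniformly on $X$ to a univalent map $h$ that is again tangent to the identity at infinity. By Theorem \ref{thmcara}, the image domains $R_{k_\ell}^{-1}(\UnitDisk) = g_{k_\ell}(X_{k_\ell})$ then converge in the sense of Carath\'eodory to $h(X)$, and the inverses satisfy $g_{k_\ell}^{-1} \to h^{-1}$ locally uniformly on $h(X)$.

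Next I would identify the locally uniform limit of the rational maps on the image side. On $h(X)$ one has $R_{k_\ell} = f_{k_\ell}\circ g_{k_\ell}^{-1}$, so for a compact $K \subset h(X)$ — which lies in $g_{k_\ell}(X_{k_\ell})$ for large $\ell$ — a routine three-term estimate, splitting $f_{k_\ell}\circ g_{k_\ell}^{-1} - f\circ h^{-1}$ through $f\circ g_{k_\ell}^{-1}$ and using that $f_k \to f$ uniformly on a fixed compact neighborhood of $h^{-1}(K)$ in $X$ together with the uniform continuity of $f$ there, shows $R_{k_\ell} \to Q := f\circ h^{-1}$ uniformly on $K$. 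Hence $R_{k_\ell} \to Q$ locally uniformly on $h(X)$.

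The crux — and the step I expect to be the main obstacle — is to show that $Q$ extends to a normalized $n$-good map with $Q^{-1}(\UnitDisk) = h(X)$, so that $(Q,h)$ is a Bell representation of $X$ associated with $f$. Since the $R_{k_\ell}$ are rational of degree $n$ vanishing at infinity, Lemma \ref{LemmaRational} already gives that $Q$ extends to a rational map of degree at most $n$. To see the degree is exactly $n$, I would note that $Q|_{h(X)} = f\circ h^{-1}$ is a proper degree-$n$ map onto $\UnitDisk$: picking a regular value $w_0 \in \UnitDisk$ of both $f$ and $Q$, the $n$ distinct points of $f^{-1}(w_0)$ map under $h$ to $n$ distinct points of $Q^{-1}(w_0)\cap h(X)$, forcing $\deg Q \geq n$ and hence $\deg Q = n$, with all preimages of $w_0$ lying in $h(X)$. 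Properness of $Q$ on $h(X)$ makes $h(X)$ a closed, as well as open, subset of $Q^{-1}(\UnitDisk)$; since the points mapping to regular values are dense in $Q^{-1}(\UnitDisk)$ and all lie in $h(X)$, I get $Q^{-1}(\UnitDisk) = h(X)$. By Lemma \ref{nconnected} this makes $Q$ an $n$-good map, and Lemma \ref{LemmaRational} gives $Q(\infty)=0$ together with the convergence of the poles of $R_{k_\ell}$ to those of $Q$; as each $R_{k_\ell}$ is normalized, the poles of $Q$ sum to zero, so $Q \in \mathcal{R}(n)/\Sigma_n$.

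Finally, since $f = Q\circ h$ with $h: X \to Q^{-1}(\UnitDisk)$ a biholomorphism tangent to the identity at infinity, the uniqueness part of Theorem \ref{normalizedBellrep} forces $(Q,h) = (R,g)$, that is $h = g$ and $Q = R$. Thus along the chosen sub-subsequence $g_{k_\ell} \to g$ locally uniformly, while Lemma \ref{LemmaRational} upgrades $R_{k_\ell} \to R$ to spherically uniform convergence on $\RiemannSphere$. As the initial subsequence was arbitrary, the subsequence principle yields $g_k \to g$ locally uniformly on $X$ and $R_k \to R$ spherically uniformly, as required.
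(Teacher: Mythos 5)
Your proof is correct and follows essentially the same route as the paper's: the subsequence principle, Lemma \ref{lemmeconv} and Theorem \ref{thmcara} to extract a univalent limit $h$ with $g_{k}^{-1}\to h^{-1}$, identification of $Q=f\circ h^{-1}$ as a normalized $n$-good map with $Q^{-1}(\UnitDisk)=h(X)$ via Lemma \ref{LemmaRational}, and the uniqueness part of Theorem \ref{normalizedBellrep} to conclude $(Q,h)=(R,g)$, followed by the upgrade to spherically uniform convergence. The only difference is that you fill in more detail (the three-term composition estimate and the regular-value/properness argument for $Q^{-1}(\UnitDisk)=h(X)$) where the paper is terser.
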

\begin{proof}
We prove that every subsequence $S\subset \NN$ admits a subsequence $S'\subset S$ along which the claimed convergence holds. Accordingly, let $S\subset \NN$ be any subsequence.\\

By Lemma \ref{lemmeconv}, $( g_k )_{k\in S}$ has a subsequence $( g_k )_{k\in S'}$ which converges locally uniformly to a univalent function $h$ on $X$. It follows that $h$ is tangent to the identity at infinity. Moreover, by Theorem \ref{thmcara}, the domains $g_k(X_k)$ converge to $h(X)$ and $g_k^{-1} \to h^{-1}$ locally uniformly on $h(X)$, as $k\to \infty$ in $S'$. Since $f_k \to f$ locally uniformly on $X$, we have that $R_k=f_k \circ g_k^{-1}$ converges to $Q:=f \circ h^{-1}$ locally uniformly on $h(X)$, as $k\to \infty$ in $S'$.
\\

By Lemma \ref{LemmaRational}, $Q$ is a rational function of degree at most $n$. Since $f$ has degree $n$ and $h$ is univalent, $Q$ has degree exactly $n$ and $h(X)$ is $n$-connected. Furthermore, since the restriction $Q : h(X) \to \UnitDisk$ is proper of degree $n$, $h(X)$ must be equal to the full preimage $Q^{-1}(\UnitDisk)$. By Lemma \ref{LemmaRational}, the poles of $R_k$ converge to the poles of $Q$. In particular, the poles of $Q$ sum to zero, as this is the case for every $R_k$. Therefore, $Q$ is a normalized $n$-good map.
\\

It follows from the uniqueness part of Theorem \ref{normalizedBellrep} that $Q=R$ and that $h=g$. Therefore, $g_k \to g$ locally uniformly on $X$ and $R_k \to R$ locally uniformly on $g(X)$, as $k\to \infty$ in $S'$. Since the subsequence $S\subset \NN$ was arbitrary, we have convergence as $k\to \infty$ in $\NN$. By Lemma \ref{LemmaRational}, $R_k$ converges to $R$ spherically uniformly on $\RiemannSphere$.
\end{proof}

In the next section, we will see how many Grunsky maps there are on any given non-degenerate $n$-connected planar domain containing infinity.

\section{Grunsky maps}

Let $X$ be a planar domain containing infinity and bounded by $n$ disjoint analytic Jordan curves $E_1,...,E_n$. If $f$ is a Grunsky map on $X$, then it extends analytically to the closure $\overline X$ by the Schwarz reflection principle. Moreover, the extended map $f : \overline X \to \overline \UnitDisk$ sends each boundary curve $E_j$ homeomorphically onto the unit circle. In particular, there exists a unique $\alpha_j \in E_j$ such that $f(\alpha_j)=1$ for each $j$. It turns out that each $n$-tuple $(\alpha_1,...,\alpha_n)\in E_1\times \cdots \times E_n$ arises in this way uniquely, and hence the set of Grunsky maps on $X$ can be parametrized by $E_1 \times \cdots \times E_n$.

\begin{theorem}
\label{normalizedBieberbach}
Let $[(X,E_1,...,E_n)] \in \mathcal{M}(n)$ and let $\alpha_j \in E_j$ for each $j$. There exists a unique Grunsky map on $X$ whose extension $f: \overline X \to \overline \UnitDisk$ satisfies $f(\alpha_j)=1$ for each $j$.
\end{theorem}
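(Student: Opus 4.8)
The plan is to recast the existence and uniqueness of the marked Grunsky map as potential theory on $X$, to finish existence by a covering-space argument, and to finish uniqueness by the maximum principle. If $f$ is a Grunsky map on $X$, then $f$ is proper of degree $n$ with $f(\infty)=0$ and has no poles, so its zeros are $\infty$ together with $n-1$ finite points $\zeta_1,\dots,\zeta_{n-1}\in X$ (with multiplicity), and $|f|=1$ on $\partial X$. Comparing singularities and boundary values shows that $u:=\log|f|$ is the superposition of Green's functions $u=-G(\cdot,\infty)-\sum_{i=1}^{n-1}G(\cdot,\zeta_i)$, where $G(\cdot,\zeta)\ge 0$ is the Green's function of $X$ with pole at $\zeta$. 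One recovers $f=e^{u+i\tilde u}$ from a harmonic conjugate $\tilde u$ of $u$, and single-valuedness of $f$ amounts to the conjugate periods of $u$ around the $E_j$ lying in $2\pi\mathbb{Z}$; since each $E_j$ must map homeomorphically onto $\UnitCircle$, each such period is forced to equal $2\pi$. Using the classical identity that the conjugate period of $G(\cdot,\zeta)$ around $E_j$ equals $2\pi\,\omega_j(\zeta)$ up to sign, where $\omega_1,\dots,\omega_n$ are the harmonic measures of the boundary components, this becomes the system $\omega_j(\infty)+\sum_{i=1}^{n-1}\omega_j(\zeta_i)=1$ for $j=1,\dots,n$ (one relation being redundant since $\sum_j\omega_j\equiv 1$). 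Thus a Grunsky map is precisely a configuration of finite zeros solving this system together with a choice of additive constant for $\tilde u$, i.e. a unimodular factor, and the marks $\alpha_j$ are functions of this data.

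For existence I would argue by covering-space theory. Let $\mathcal{G}(X)$ be the set of Grunsky maps on $X$ with the topology of local uniform convergence, and let $\Phi:\mathcal{G}(X)\to E_1\times\cdots\times E_n$ send $f$ to $(\alpha_1,\dots,\alpha_n)$; the target is homeomorphic to the connected $n$-manifold $\UnitCircle^n$. The set $\mathcal{G}(X)$ is nonempty because the period system admits at least one solution (equivalently, the Ahlfors function of $X$ with respect to $\infty$ is a Grunsky map). The map $\Phi$ is continuous: by the Schwarz reflection principle each Grunsky map extends analytically to the fixed compact set $\overline X$, so local uniform convergence upgrades to uniform convergence on $\overline X$ and the marks vary continuously. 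It is proper: a sequence of Grunsky maps with convergent marks is a normal family bounded by $1$, and any locally uniform limit is again a degree-$n$ proper map, the degree being prevented from dropping because the marks stay on the boundary curves so the zeros cannot escape to $\partial X$; this is the mechanism exploited in Lemma \ref{LemmaRational} and Theorem \ref{thmconv}. Finally $\Phi$ is a local homeomorphism, which I would obtain by linearizing the period system together with the equations defining the marks, the relevant derivative being governed by the nonsingular period matrix of the harmonic measures $\omega_1,\dots,\omega_{n-1}$; the implicit function theorem then realizes marks near those of a given $f$ by a nearby Grunsky map. A proper local homeomorphism onto a connected manifold is a covering map, hence surjective, which gives existence.

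Uniqueness I would reduce to the maximum principle. If $f_1,f_2\in\mathcal{G}(X)$ have the same marks, then on each $E_j$ both maps wind exactly once in the same direction and take the value $1$ at $\alpha_j$, so $f_1/f_2$ has winding number $0$ around every boundary curve; as the $E_j$ generate $H_1(X)$, the ratio $f_1/f_2$ admits a single-valued logarithm away from the zeros of $f_1$ and $f_2$, whose real part $\log|f_1|-\log|f_2|$ vanishes on $\partial X$. Once one knows that $f_1$ and $f_2$ have the same zeros, this harmonic function has no singularities and vanishing boundary values, hence vanishes identically, so $f_1/f_2$ is a unimodular constant which $f_i(\alpha_j)=1$ forces to be $1$. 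Equivalently, uniqueness is the statement that the covering map $\Phi$ is one-sheeted.

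The main obstacle is exactly the bijectivity of the passage between zero configurations (with the unimodular factor) and boundary marks: both the local homeomorphism property needed for existence and the equality of zeros needed for uniqueness amount to showing that the derivative of the marking map is an isomorphism and that the marking map is globally injective. The first rests on the classical nonsingularity of the harmonic-measure period matrix on a finitely connected domain, while the second is the genuinely global point; controlling the zeros near the boundary in the properness step, so that the limiting degree cannot drop, is the accompanying technical difficulty, for which the convergence results of Theorem \ref{thmconv} and Lemma \ref{LemmaRational} are the appropriate tools.
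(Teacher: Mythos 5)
Your overall strategy --- encode a Grunsky map by its zero divisor via $\log|f|=-G(\cdot,\infty)-\sum_i G(\cdot,\zeta_i)$ and the period conditions $\omega_j(\infty)+\sum_i\omega_j(\zeta_i)=1$, then study the marking map $\Phi:\mathcal{G}(X)\to E_1\times\cdots\times E_n$ --- is a legitimate classical framework, and your first paragraph is essentially correct. But the proposal is a plan rather than a proof: the two statements that carry all the weight are exactly the ones you defer. First, the claim that $\Phi$ is a local homeomorphism does not follow from the nonsingularity of the harmonic-measure period matrix. The Jacobian you actually need to invert is that of the map $(\zeta_1,\dots,\zeta_{n-1},\lambda)\mapsto(\alpha_1,\dots,\alpha_n)$ restricted to the solution variety of the period system; it involves the gradients $\nabla\omega_j(\zeta_i)$ at the moving zeros and the boundary variation of $\arg f$ at the marks, not the classical matrix of periods of the $\omega_j$ around the $E_k$. (Smoothness of the solution variety can be extracted from the fact that a nonconstant combination $\sum_j c_j\omega_j$ has at most $n-2$ critical points in $X$, but injectivity of the derivative of the marking map on that variety is a further, nontrivial computation that you do not perform.) Second, even granting that $\Phi$ is a proper local homeomorphism, you only obtain a finite covering of $\UnitCircle^n$; since the torus covers itself nontrivially, connectedness gives surjectivity (existence) but says nothing about the number of sheets, so uniqueness does not follow from topology. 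Your maximum-principle argument for uniqueness is explicitly conditional on $f_1$ and $f_2$ having the same zeros, which is never established and is essentially equivalent to the injectivity being proved. You flag both points yourself as ``the main obstacle''; that is an accurate self-assessment --- the obstacle is the theorem.

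Two smaller issues. Properness of $\Phi$ is true, but not for the reason you give: convergence of the marks does not prevent zeros from escaping to $\partial X$. What does prevent it is the period system itself (each $\omega_j(\zeta_i)\le 1-\omega_j(\infty)<1$ confines all zeros to a fixed compact subset of $X$), or alternatively the mechanism of Lemma \ref{GrunskyVariableDomain}, where $f(\infty)=0$ forces any limit to be nonconstant and the boundary behaviour then forces degree at least $n$. Also, a degree-$n$ proper map with $f(\infty)=0$ may have a multiple zero at $\infty$, so ``$\infty$ together with $n-1$ finite points'' should read ``$n$ zeros counted with multiplicity, including $\infty$.'' For context, the paper does not prove this theorem internally: it cites \cite{BELL} for existence (via the Szeg\H{o} and Garabedian kernels) and \cite{FBY} for uniqueness, so your route would in any case be a different one and would need the above steps filled in to stand on its own.
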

\begin{proof}
See \cite[Corollary 2.6]{FBY} and \cite[Theorem 2.2]{BELL}.
\end{proof}

We call the function $f$ in the above theorem the \textit{Grunsky map for $(\alpha_1,...,\alpha_n)$}. By combining the above result with Theorem \ref{normalizedBellrep}, we obtain the following.

\begin{theorem}\label{BiebBellRep}
Let $[(X,E_1,...,E_n)] \in \mathcal{M}(n)$ and let $\alpha_j \in E_j$ for each $j$. Then there is a unique $(a,b) \in \mathcal{R}(n)$ and a unique isomorphism
$$
g: (X,E_1,...,E_n) \to (R_{a,b}^{-1}(\UnitDisk),g(E_1),...,g(E_n))
$$ such that the curve $g(E_j)$ encloses $b_j$ for each $j$ and $R_{a,b} \circ g$ is the Grunsky map for $(\alpha_1,...,\alpha_n)$.
\end{theorem}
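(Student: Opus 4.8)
The plan is to simply compose the two uniqueness results already at hand: Theorem~\ref{normalizedBieberbach} produces the Grunsky map attached to the marked points, and Theorem~\ref{normalizedBellrep} then rigidifies it as a rational map precomposed with a normalized biholomorphism. The only genuinely new work is in passing from the unordered representation to an ordered tuple $(a,b)\in\mathcal{R}(n)$.

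First I would let $f$ be the Grunsky map for $(\alpha_1,\dots,\alpha_n)$ furnished by Theorem~\ref{normalizedBieberbach}; it is the unique Grunsky map on $X$ whose boundary extension sends each $\alpha_j$ to $1$. Applying Theorem~\ref{normalizedBellrep} to $f$ yields a normalized $n$-good map $R\in\mathcal{R}(n)/\Sigma_n$ and a biholomorphism $g:X\to R^{-1}(\UnitDisk)$ tangent to the identity at infinity with $f=R\circ g$. Next I would upgrade $g$ to an isomorphism in the sense of $\mathcal{M}(n)$ and pin down the ordering of the poles. Since $X$ and $R^{-1}(\UnitDisk)$ are both bounded by analytic Jordan curves, $g$ extends to a biholomorphism $\overline X\to\overline{R^{-1}(\UnitDisk)}$ by Schwarz reflection; conditions (i) and (ii) in the definition of isomorphism hold because $g$ is tangent to the identity at infinity, and (iii) is automatic once we name the target boundary curves $g(E_1),\dots,g(E_n)$. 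By Lemma~\ref{nconnected}, each component of $\RiemannSphere\setminus R^{-1}(\UnitDisk)$ contains exactly one pole of $R$, so I would choose the representative $(a,b)\in\mathcal{R}(n)$ of $R$ by labeling the poles so that $b_j$ is the pole lying in the component bounded by $g(E_j)$. This choice is forced, makes $(a,b)$ a well-defined element of $\mathcal{R}(n)$, and by construction $R_{a,b}\circ g=f$ is the Grunsky map for $(\alpha_1,\dots,\alpha_n)$.

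For uniqueness, suppose $(a',b')\in\mathcal{R}(n)$ and an isomorphism $g'$ satisfy the same conclusions. Then $R_{a',b'}\circ g'$ is again the Grunsky map for $(\alpha_1,\dots,\alpha_n)$, hence equals $f$ by the uniqueness in Theorem~\ref{normalizedBieberbach}. As $g'$ is an isomorphism it is tangent to the identity at infinity, and $R_{a',b'}$ represents a normalized $n$-good map, so the uniqueness in Theorem~\ref{normalizedBellrep} gives $g'=g$ and $R_{a',b'}=R_{a,b}$ in $\mathcal{R}(n)/\Sigma_n$. Finally the enclosing-pole condition forces the same ordering of the poles, so $(a',b')=(a,b)$ in $\mathcal{R}(n)$.

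The argument is essentially bookkeeping, and the only delicate point I expect is the passage between $\mathcal{R}(n)/\Sigma_n$ and $\mathcal{R}(n)$: the two input theorems determine $R$ only up to permutation of its partial fractions, and it is the requirement that $g(E_j)$ enclose $b_j$ that lifts this to a canonically ordered tuple. The hard part will therefore be checking that this enclosing condition is consistent and bijective, namely that $g$ carries the complementary component of $X$ bounded by $E_j$ onto a complementary component of $R^{-1}(\UnitDisk)$ containing a single pole, which is exactly what Lemma~\ref{nconnected} guarantees.
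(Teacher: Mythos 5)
Your proposal is correct and is exactly the argument the paper intends: the paper states this theorem with no separate proof, introducing it only with ``By combining [Theorem~\ref{normalizedBieberbach}] with Theorem~\ref{normalizedBellrep}, we obtain the following,'' and your write-up simply spells out that combination, including the one detail worth making explicit (using Lemma~\ref{nconnected} to place exactly one pole in each complementary component so that the enclosing condition canonically orders $(a,b)$). No gaps.
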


We will need the fact that Grunsky maps depend continuously on the domain $X$ as well as on the $n$-tuple $(\alpha_1,...,\alpha_n)$, at least for circle domains. The first ingredient for this is the following compactness result.

\begin{lemma} \label{GrunskyVariableDomain}
Let $X$ be an $n$-connected circle domain containing infinity. Suppose that $X_k \to X$, where each $X_k$ is an $n$-connected circle domain containing infinity. Let $f_k$ be a Grunsky map on $X_k$. Then there exists a subsequence $(f_{k_\ell})_{\ell=1}^{\infty}$, a Grunsky map $f$ on $X$, and a neighborhood $U$ of $\overline X$ such that $f_{k_\ell}$ and $f$ extend holomorphically to $U$ for all $\ell$ and $(f_{k_\ell})_{\ell=1}^\infty$ converges uniformly to $f$ on $U$.
\end{lemma}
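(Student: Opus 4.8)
The plan is to combine the convergence of the boundary circles with a normal–family (Montel) argument: Schwarz reflection across the boundary circles will extend the $f_k$ to a fixed neighborhood of $\overline X$ and furnish a uniform bound there, while a degree count will prevent the limit from degenerating. First I would use the remark following Theorem \ref{convergencekoebe}: after reindexing the boundary circles, the centers $c_j^k$ and radii $r_j^k$ of the complementary disks $D_j^k$ of $X_k$ converge to the centers $c_j$ and radii $r_j$ of the complementary disks $D_j$ of $X$. Shrinking each disk slightly to the concentric disk $D_j'$ of radius $r_j'<r_j$, I set $U:=\RiemannSphere\setminus\bigcup_j D_j'$, an open neighborhood of $\overline X$; for large $k$ each circle $C_j^k:=\partial D_j^k$ lies in $U$. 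Since each $X_k$ is a circle domain, $f_k$ extends across $C_j^k$ by the reflection $f_k(\rho_j^k(z))=1/\overline{f_k(z)}$, where $\rho_j^k$ is the inversion in $C_j^k$; the reflected values have modulus $1/|f_k|$, so a uniform bound on $U$ will follow once $|f_k|$ is bounded below on a fixed collar just outside each $C_j^k$.

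Next, since $|f_k|<1$ on $X_k$ and $X_k\to X$, the family $\{f_k\}$ is uniformly bounded on every compact subset of $X$ (such a set lies in $X_k$ for $k$ large). By Montel's theorem and a diagonal argument I extract a subsequence $f_{k_\ell}\to f$ locally uniformly on $X$, with $f$ holomorphic, $|f|\le 1$ and $f(\infty)=0$.

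The crux, which I expect to be the main obstacle, is to show that the $n$ zeros of $f_{k_\ell}$ (with multiplicity, one of which is $\infty$) stay in a fixed compact subset $K_0\subset X$, equivalently that the limit $f$ retains full degree $n$. I would argue through the harmonic representation $-\log|f_{k_\ell}|=\sum_{i=1}^n g_{X_{k_\ell}}(\cdot,z_i^{k_\ell})$, where $g_{X_{k_\ell}}$ is the Green's function and $z_i^{k_\ell}$ are the zeros of $f_{k_\ell}$. Suppose some zeros escaped to $\partial X$. A zero approaching a circle $C_j$ contributes a Green's term dominated by the explicit Green's function of the disk complement $\RiemannSphere\setminus D_j^{k_\ell}$, which tends to $0$ locally uniformly as the pole approaches the boundary, while the remaining $m<n$ non-escaping terms converge to the corresponding Green's functions of $X$ by continuity of the Green's function under kernel convergence. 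Passing to the limit gives $-\log|f|$ equal to a finite sum of $m$ Green's functions of $X$; hence $f\not\equiv 0$, $|f|\to 1$ at $\partial X$, and $f:X\to\UnitDisk$ is proper of degree $m$. But a proper holomorphic map from a non-degenerate $n$-connected domain onto $\UnitDisk$ has degree at least $n$, since its analytic boundary extension restricts to a covering of $\UnitCircle$ of degree $\ge 1$ on each of the $n$ boundary curves. This forces $m\ge n$, so $m=n$, no zero escapes, $f$ is a Grunsky map on $X$, and all zeros of $f_{k_\ell}$ eventually lie in a fixed compact $K_0\subset X$. This degree obstruction is precisely what distinguishes the $n$-connected case from the simply connected one, where zeros may escape and the degree may drop.

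Finally, I would upgrade to uniform convergence on a neighborhood of $\overline X$. Choosing $K_0$ disjoint from a fixed collar of each $C_j$, the function $\log|f_{k_\ell}|$ is harmonic and $\le 0$ on the annular region between $C_j^{k_\ell}$ and a fixed circle just outside $C_j$ (no zeros there), vanishes on $C_j^{k_\ell}$, and is $\ge\log\delta$ on the outer circle for $\ell$ large, since $f_{k_\ell}\to f$ uniformly there and $f$ is bounded below off $K_0$. The minimum principle then gives $|f_{k_\ell}|\ge\delta$ throughout this collar, uniformly in $\ell$. Reflecting as above (choosing the shrinkage $r_j'$ close enough to $r_j$ that the reflected part of $U$ lands inside this collar) yields $|f_{k_\ell}|\le 1/\delta$ on $U$, so $\{f_{k_\ell}\}$ is a normal family on $U$; after passing to a further subsequence it converges uniformly on $U$ to a holomorphic function agreeing with $f$ on $X$, so $f$ extends holomorphically to $U$. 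The same reflection shows $f$ itself extends holomorphically to $U$, completing the proof.
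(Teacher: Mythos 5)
Your proof is correct in outline but takes a genuinely different route from the paper's. The paper also starts from Schwarz reflection across the boundary circles, but it handles the possible degeneration of the reflected family by tracking the $3n$ points $F_k=f_k^{-1}(\{-1,1,i\})\subset\partial X_k$, applying Montel's \emph{fundamental normality test} on the reflected domain minus the limit set $F$, and then Picard's big theorem to remove the singularities at $F$; the limit is shown to have degree exactly $n$ by the same boundary count you use (each boundary circle must map onto $\UnitCircle$), and the poles of the reflected maps are located as reflections of the zeros. You instead control the zeros directly via the potential-theoretic identity $-\log|f_k|=\sum_{i}g_{X_k}(\cdot,z_i^k)$, rule out escaping zeros with the degree lower bound, and then get the uniform lower bound $|f_{k_\ell}|\ge\delta$ on boundary collars from the minimum principle, so that the reflected family is bounded on a fixed $U$ and ordinary Montel finishes. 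Your route is more quantitative and avoids Picard's theorem entirely; its cost is that it rests on two potential-theoretic facts you assert rather than prove: (i) joint continuity of $g_{X_k}(z,w)$ in the domain and the pole as the centers and radii of the complementary disks converge, and (ii) $g_{X_k}(z,w)\to 0$ locally uniformly in $z$ as $w$ approaches a complementary disk. Both are true here precisely because the complementary disks have radii bounded away from zero (uniform exterior-disk barriers make the Dirichlet problem stable, and (ii) follows from the explicit Green's function of a disk exterior), but your phrase ``continuity of the Green's function under kernel convergence'' is false as a general principle --- the paper's own example of a disk complement punctured by many tiny disks is a counterexample --- so you should state explicitly that the circle-domain hypothesis is what rescues it. With that justification supplied, your argument is complete and proves the lemma.
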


\begin{proof}
Denote by $E_1,...,E_n$ the circles bounding $X$, let $J_j$ denote inversion in the circle $E_j$, and let
$$
Y:=\overline X \cup J_1(X) \cup \cdots \cup J_n(X).
$$
Similarly, denote by $E_1^k,...,E_n^k$ the circles bounding $X_k$ labeled in such a way that the center and radius of $E_j^k$ converge to the center and radius of $E_j$ as $k\to \infty$. Then let $J_j^k$ denote inversion in the circle $E_j^k$ and let
$$
Y_k:=\overline{X_k} \cup J_1^k(X_k) \cup \cdots \cup J_n^k(X_k).
$$
For each $j \in \{1,...,n\}$, we have that $J_j^k$ converges spherically uniformly to $J_j$ on $\RiemannSphere$. It follows that $Y_k \to Y$ in the sense of Carath\'eodory.  Moreover, each $f_k$ extends to a meromorphic function on $Y_k$, by Schwarz reflection. More precisely, if $J$ denotes inversion in the unit circle $\UnitCircle$, then for $z\in J_j^k(X_k)$ we define
$$f_k(z):=J(f_k(J_j^k(z))).$$

Let $F_k:= f_k^{-1}(\{-1,1,i\})$. For each $k$, $F_k$ has cardinality $3n$ and is contained in $\partial X_k$. Therefore, by passing to a subsequence if necessary, we can assume that $F_k$ converges to a finite set $F \subset \partial X$. By Montel's fundamental normality test, we can further extract a subsequence $(f_{k_\ell})_{\ell=1}^{\infty}$ converging locally uniformly to some meromorphic function $f$ on $Y \setminus F$ with respect to the spherical metric.\\

Since for each $k$ we have $f_k(X_k)=\UnitDisk$, we have that $f(X)\subset \overline \UnitDisk$ and hence $f$ is holomorphic on $X$. We also have $f_k(\partial X_k) = \UnitCircle$ for each $k$ and hence $f(\partial X \setminus F) \subset \UnitCircle$. Moreover, $f(\infty)=\lim_{\ell \to \infty} f_{k_\ell}(\infty)=0$, so that $f$ is not constant. By the maximum principle, we have $f(X) \subset \UnitDisk$.\\

Let $w\in \UnitDisk$ and let $\zeta$ be a zero of $f-w$ in $X$ of multiplicity $m$. Let $D \subset X$ be a closed disk centered at $z$ and such that $f-w$ does not vanish on $D\setminus \zeta$. Since $f_{k_\ell} \to f$ uniformly on $\partial D$, there exists an $L$ such that
$$|(f_{k_\ell}(z)-w) -(f(z) - w)| < |f(z)-w|$$
for all $z \in \partial D$ and all $\ell \geq L$. By Rouch\'e's theorem, $f_{k_\ell}-w$ then has $m$ zeros in $D$ counting multiplicity. Since $f_k-w$ has exactly $n$ zeros in $X_k$ for each $k$, this implies that $f-w$ has a total of at most $n$ zeros in $X$. Therefore, $f$ has degree at most $n$ on $X$. The same is true on $J_j(X)$ for each $j$, since $f$ must be equal to $J\circ f \circ J_j$ there. Note that the above argument also implies that the zeros of $f_{k_\ell}$ converge to the zeros of $f$ when $f$ has degree exactly $n$.\\

By Picard's big theorem, $f$ cannot have essential singularities in $F$, and thus extends to a meromorphic function on $Y$. By continuity, we have $f(\partial X) \subset \UnitCircle$. Since $f$ is not constant, the restriction $f : E_j \to \UnitCircle$ to each boundary component is open. The image $f(E_j)$ is therefore open in $\UnitCircle$ as well as compact and hence closed. By connectedness of the circle, we have $f(E_j)=\UnitCircle$ for each $j$. This means that $f$ has degree at least $n$ and thus exactly $n$ on $\overline X$. The map $f:X \to \UnitDisk$ is proper since it extends continuously to $\overline{X}$ with $f(\partial X) \subset \UnitCircle$. It is thus a Grunsky map.\\

Let $U$ be any neighborhood of $\overline X$ with closure in $Y$ on which $f$ is bounded. The identities $f_{k_\ell}(J_j^{k_\ell}(z))=J(f_{k_\ell}(z))$ and $f(J_j(z))=J(f(z))$ for $z \in X$ imply that $f_{k_\ell}$ has a zero at $z_0 \in X$ if and only if it has a pole at $J_j^{k_\ell}(z_0)$ for each $j$, and similarly for $f$. Since the zeros of $f_{k_\ell}$ converge to the zeros of $f$ and $J_j^{k_\ell}$ converges to $J_j$ locally uniformly on $\RiemannSphere$ for each $j$, it follows that the poles of $f_{k_\ell}$ must converge to the poles of $f$. We may thus assume that $f_{k_\ell}$ is holomorphic on $U$ for all $\ell$. By the maximum principle, the sequence $(f_{k_\ell})_{\ell=1}^{\infty}$ converges to $f$ uniformly on $\overline U$. Indeed, $\partial U$ is compact and contained in $Y\setminus F$, where $(f_{k_\ell})_{\ell=1}^{\infty}$ converges to $f$ locally uniformly.\\
\end{proof}

The following two corollaries are precisely what we need for the proof of Theorem \ref{MainTheorem}.

\begin{corollary} \label{GrunskyBoundaryPoints1}
Let $(X_k,E_1^k,...,E_n^k)$ and $(X,E_1,...,E_n)$ be non-degenerate $n$-connec\-ted circle domains containing infinity. Suppose that the center and radius of the circle $E_j^k$ converge to the center and radius of the circle $E_j$, as $k\to \infty$ for each $j$. Let $\alpha^k \in \prod_{j=1}^n E_j^k$ and $\alpha \in \prod_{j=1}^n E_j$ be such that $\alpha^k \to \alpha$. Let $f_k$ and $f$ be the Grunsky maps on $X_k$ and $X$ for $\alpha^k$ and $\alpha$ respectively. Then $(f_k)_{k=1}^{\infty}$ converges to $f$ locally uniformly on $X$.
\end{corollary}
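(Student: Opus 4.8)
The plan is to run a subsequence argument anchored on the compactness result of Lemma \ref{GrunskyVariableDomain}, using the boundary normalization $f(\alpha_j)=1$ together with the uniqueness in Theorem \ref{normalizedBieberbach} to identify every subsequential limit with $f$. Since a sequence in a metric space converges to a given point as soon as every subsequence has a further subsequence converging to that point, it suffices to show that every subsequence of $(f_k)_{k=1}^\infty$ admits a further subsequence converging locally uniformly on $X$ to $f$.

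First I would check that the hypotheses of Lemma \ref{GrunskyVariableDomain} are met. By the remark following Theorem \ref{convergencekoebe}, the convergence of the centers and radii of $E_j^k$ to those of $E_j$ implies that $X_k \to X$ in the sense of Carath\'eodory. Now fix any subsequence $S \subset \NN$. By Lemma \ref{GrunskyVariableDomain}, there is a further subsequence $S' \subset S$, a Grunsky map $\tilde f$ on $X$, and a neighborhood $U$ of $\overline X$ such that $f_k \to \tilde f$ uniformly on $U$ as $k \to \infty$ in $S'$.

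The key step is to show $\tilde f = f$. Since $\alpha_j \in E_j \subset \overline X \subset U$ and $U$ is open, the points $\alpha_j^k$ lie in $U$ for all large $k$, because $\alpha_j^k \to \alpha_j$. Uniform convergence of $f_k$ to $\tilde f$ on $U$, together with the continuity of $\tilde f$ and the convergence $\alpha_j^k \to \alpha_j$, then gives
$$
\tilde f(\alpha_j) = \lim_{k \to \infty} f_k(\alpha_j^k) = 1,
$$
where the limit is taken along $S'$ and the last equality holds because $f_k$ is the Grunsky map for $\alpha^k$, so that $f_k(\alpha_j^k)=1$ for every $k$ and every $j$. Thus $\tilde f$ is a Grunsky map on $X$ whose boundary extension satisfies $\tilde f(\alpha_j)=1$ for each $j$, and the uniqueness part of Theorem \ref{normalizedBieberbach} forces $\tilde f = f$. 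In particular $f_k \to f$ locally uniformly on $X$ as $k \to \infty$ in $S'$.

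Since the subsequence $S$ was arbitrary, the subsequence principle yields $f_k \to f$ locally uniformly on $X$, as desired. I expect the only delicate point to be the passage to the boundary: one must exploit that Lemma \ref{GrunskyVariableDomain} provides uniform convergence on a full neighborhood $U$ of $\overline X$, and not merely locally uniformly on $X$, in order to evaluate the limit at the boundary points $\alpha_j$ and thereby read off the normalization $\tilde f(\alpha_j)=1$. Everything else is the routine bookkeeping of the subsequence argument combined with the uniqueness supplied by Theorem \ref{normalizedBieberbach}.
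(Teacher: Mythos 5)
Your proposal is correct and follows essentially the same route as the paper's proof: extract a subsequence via Lemma \ref{GrunskyVariableDomain} converging uniformly on a neighborhood of $\overline X$, evaluate at $\alpha_j^k \to \alpha_j$ to get the normalization $\tilde f(\alpha_j)=1$, and invoke the uniqueness in Theorem \ref{normalizedBieberbach} to conclude. The delicate point you flag — that uniform convergence on a full neighborhood of $\overline X$ is what permits passing to the boundary — is exactly the role Lemma \ref{GrunskyVariableDomain} plays in the paper's argument as well.
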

\begin{proof}
Let $(f_k)_{k\in S}$ be any subsequence of $(f_k)_{k=1}^{\infty}$. By Lemma \ref{GrunskyVariableDomain}, we can extract a subsequence $S' \subset S$ such that there is a Grunsky map $\varphi$ on $X$ and a neighborhood $U$ of $\overline X$ such that $\varphi$ and $f_k$ extend analytically to $U$ for all $k\in S'$ and $(f_k)_{k \in S'}$ converges to $\varphi$ uniformly on $U$ as $k \to \infty$ in $S'$. For each $j \in \{1,...,n \}$, we thus have
$$
\varphi(\alpha_j)=\lim_{\substack{k\to\infty \\ k \in S'}} f_k(\alpha_j^k) = 1,
$$
and hence $\varphi=f$ by Theorem \ref{normalizedBieberbach}. This proves that every subsequence of $(f_k)_{k=1}^{\infty}$ has a subsequence which converges to $f$ locally uniformly on $X$. Therefore $(f_k)_{k=1}^{\infty}$ converges to $f$ locally uniformly on $X$.
\end{proof}

\begin{corollary} \label{GrunskyBoundaryPoints2}
Let $(X_k,E_1^k,...,E_n^k)$ and $(X,E_1,...,E_n)$ be $n$-connected circle domains containing infinity. Suppose that the center and radius of the circle $E_j^k$ converge to the center and radius of the circle $E_j$ for each $j$ as $k\to \infty$. Let $f_k$ and $f$ be Grunsky maps on $X_k$ and $X$ respectively. Let $\beta^k,\beta \in \UnitCircle^n$ be such that $\beta^k \to \beta$ and let $\alpha_j^k \in E_j^k$ and $\alpha_j \in E_j$ be such that $f_k(\alpha_j^k)=\beta_j^k$ and $f(\alpha_j)=\beta_j$ for each $j$. If $(f_k)_{k=1}^{\infty}$ converges to $f$ locally uniformly on $X$, then $\alpha^k \to \alpha$.
\end{corollary}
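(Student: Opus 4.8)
The plan is to run a subsequence argument. The essential difficulty is that the hypothesis only supplies convergence of the $f_k$ on the \emph{open} domain $X$, whereas the points $\alpha_j^k$ lie on the boundary; so the first task is to upgrade locally uniform convergence on $X$ to convergence holding up to and across $\partial X$, which is exactly what Lemma \ref{GrunskyVariableDomain} provides. Before that, two preliminary observations are in order. Since the centers and radii of the $E_j^k$ converge to those of the $E_j$, the remark following Theorem \ref{convergencekoebe} gives $X_k \to X$ in the sense of Carath\'eodory, so Lemma \ref{GrunskyVariableDomain} is applicable. Moreover, because $f$ is a Grunsky map on a circle domain, its Schwarz reflection maps each $E_j$ homeomorphically onto $\UnitCircle$; hence $\alpha_j$ is the \emph{unique} point of $E_j$ with $f(\alpha_j)=\beta_j$, and it suffices to show that $f(\alpha_j^{*})=\beta_j$ for every subsequential limit $\alpha_j^{*}\in E_j$ of $(\alpha_j^k)$.

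To prove $\alpha^k \to \alpha$, I would show that every subsequence $S\subset\NN$ admits a further subsequence along which $\alpha^k\to\alpha$, which is enough since the $\alpha^k$ live in a metric space. Given $S$, apply Lemma \ref{GrunskyVariableDomain} to extract $S_1\subset S$ together with a neighborhood $U$ of $\overline X$ and a Grunsky map $\varphi$ on $X$ such that $f_k$ and $\varphi$ extend holomorphically to $U$ and $f_k\to\varphi$ uniformly on $U$ as $k\to\infty$ in $S_1$. Since $f_k\to f$ locally uniformly on $X$ by hypothesis, $\varphi=f$ on $X$; as both functions are continuous on $U\supseteq\overline X$ and every boundary point of $X$ is a limit of interior points, it follows that $\varphi=f$ on $\overline X$. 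Thus along $S_1$ the sequence $f_k$ converges to $f$ uniformly on a neighborhood of $\overline X$.

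Next, the circles $E_j^k$ are uniformly bounded (their centers and radii converge), so the points $\alpha_j^k$ lie in a compact set, and after passing to a further subsequence $S_2\subset S_1$ I may assume $\alpha_j^k\to\alpha_j^{*}$ for each $j$; the convergence of centers and radii forces $\alpha_j^{*}\in E_j$. Now combine the two convergences: for each $j$,
$$
|f_k(\alpha_j^k)-f(\alpha_j^{*})|\le |f_k(\alpha_j^k)-f(\alpha_j^k)|+|f(\alpha_j^k)-f(\alpha_j^{*})|,
$$
where the first term tends to $0$ by uniform convergence on $U$ (which contains $\alpha_j^k$ for large $k\in S_2$) and the second by continuity of $f$ at $\alpha_j^{*}$. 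Hence $f_k(\alpha_j^k)\to f(\alpha_j^{*})$ along $S_2$. On the other hand $f_k(\alpha_j^k)=\beta_j^k\to\beta_j$, so $f(\alpha_j^{*})=\beta_j$. By the uniqueness of the point of $E_j$ mapped to $\beta_j$, namely $\alpha_j^{*}=\alpha_j$, we conclude $\alpha^k\to\alpha$ along $S_2$, and since $S$ was arbitrary, $\alpha^k\to\alpha$.

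I expect the genuine obstacle to be the passage from convergence on $X$ to convergence near $\partial X$, isolated in the second paragraph: one must guarantee that the boundary extensions of the $f_k$ behave well and that the subsequential limit furnished by Lemma \ref{GrunskyVariableDomain} coincides with the given $f$ not merely on $X$ but at the relevant boundary points. Once this identification is secured, the remainder is a routine compactness-and-continuity argument.
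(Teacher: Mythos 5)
Your proposal is correct and follows essentially the same route as the paper's own proof: a subsequence argument that upgrades interior convergence to uniform convergence on a neighborhood of $\overline X$ via Lemma \ref{GrunskyVariableDomain}, identifies the limit with $f$, and concludes via the injectivity of $f$ on each boundary circle. The only cosmetic difference is the order in which you extract the convergent subsequence of boundary points versus invoking the compactness lemma, which is immaterial.
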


\begin{proof}
Fix $j \in \{1,...,n\}$, and let $(\alpha_j^k)_{k \in S}$ be any subsequence of $(\alpha_j^k)_{k\in \NN}$. Since the circle $E_j^k$ converges to $E_j$, we can extract a subsequence $S' \subset S$ such that $\alpha_j^k$ converges to some $z_j \in E_j$ as $k\to \infty$ in $S'$. Moreover, by Lemma \ref{GrunskyVariableDomain}, there exists a further subsequence $S'' \subset S'$, a Grunsky map $\varphi$ on $X$ and a neighborhood $U$ of $\overline X$ to which $\varphi$ and $f_k$ extend for all $k\in S''$ and such that $(f_k)_{k \in S''}$ converges to $\varphi$ uniformly on $U$ as $k\to \infty$ in $S''$. Since $f_k$ converges locally uniformly to $f$ on $X$, we have $\varphi=f$. Moreover, by uniform convergence, we have that
$$
f(z_j)=\lim_{\substack{k\to\infty \\ k\in S''}} f_k(\alpha_j^k)= \lim_{\substack{k\to\infty \\ k\in S''}} \beta_j^k = \beta_j.
$$
Since the restriction $f : E_j \to \UnitCircle$ is injective, $z_j = \alpha_j$. Therefore, every subsequence of $(\alpha_j^k)_{k\in \NN}$ has a subsequence converging to $\alpha_j$ and hence $(\alpha_j^k)_{k=1}^{\infty}$ converges to $\alpha_j$.\\
\end{proof}

\section{Ahlfors functions}

If $g$ is a function holomorphic in a neighborhood of $\infty$ in $\RiemannSphere$, then we define
$$
v_\infty(g):=g'(\infty):=\lim_{z\to \infty} z(g(z)-g(\infty)).
$$

Let $X$ be a planar domain containing $\infty$. The \textit{analytic capacity} of $\RiemannSphere \setminus X$ is defined as
$$
\gamma(\RiemannSphere \setminus X) := \sup \{ |g'(\infty)| : g \in \mathcal{O}(X,\overline \UnitDisk) \},
$$
where $\mathcal{O}(X,Y)$ denotes the set of holomorphic maps from $X$ to $Y$. The analytic capacity  $\gamma(\RiemannSphere \setminus X)$ is also known as the \textit{Carath\'eodory length} of the tangent vector $v_\infty$ in $X$.\\

If $\gamma(\RiemannSphere \setminus X)>0$, then there is a unique $f \in \mathcal{O}(X,\UnitDisk)$ such that
$$
f'(\infty) = \gamma(\RiemannSphere \setminus X),
$$
called the \textit{Ahlfors function} on $X$. It is easy to see that the Ahlfors function satisfies $f(\infty)=0$. Furthermore, if $h$ is univalent on $X$ and satisfies $h(\infty)=\infty$ and $\lim_{z\to \infty} h(z)/z = a$, then the Ahlfors function on $h(X)$ is $(|a|/a) f \circ h^{-1}$ and $$\gamma(\RiemannSphere \setminus h(X))=|a|\gamma(\RiemannSphere \setminus X).$$ We refer to this property as the \textit{transformation law} for analytic capacity. Finally, analytic capacity is \textit{outer regular}, in the sense that if $K_1 \supset K_2 \supset K_3 \dots$ is a decreasing sequence of compact sets, then $\gamma(\cap_j K_j) = \lim_{j \rightarrow \infty} \gamma(K_j)$.
\\

The following is due to Ahlfors \cite{AHL} :
\begin{theorem}[Ahlfors] \label{Ahlfors}
If $X$ is a non-degenerate $n$-connected domain containing $\infty$, then the Ahlfors function on $X$ is a Grunsky map. In particular, if $X$ is bounded by $n$ disjoint analytic Jordan curves, then the Ahlfors function on $X$ extends analytically to a neighborhood of $\overline X$.
\end{theorem}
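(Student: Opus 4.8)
The plan is to prove the ``in particular'' statement first---the case where $X$ is bounded by $n$ disjoint analytic Jordan curves---and then deduce the general case from the transformation law for analytic capacity. As noted above, any non-degenerate $n$-connected domain can be mapped conformally onto a domain bounded by $n$ disjoint analytic Jordan curves by applying the Riemann mapping theorem once per boundary component; post-composing with a M\"obius transformation, I may take this biholomorphism $h$ to fix $\infty$ and satisfy $h(z)/z \to a$ for some $a \neq 0$. The transformation law then expresses the Ahlfors function on $X$ as a unimodular constant times the Ahlfors function on $h(X)$ precomposed with $h$. Since $h$ is a biholomorphism fixing $\infty$ and multiplication by a unimodular constant preserves properness, degree, and the value at $\infty$, the function on $X$ is a Grunsky map as soon as the one on $h(X)$ is. Thus it suffices to treat domains bounded by analytic Jordan curves.

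For such an $X$, I would first obtain the Ahlfors function $f$ as the solution of the extremal problem defining $\gamma(\RiemannSphere \setminus X)$: the family $\mathcal{O}(X,\overline{\UnitDisk})$ is normal by Montel's theorem and $g \mapsto g'(\infty)$ is continuous under locally uniform convergence, so the supremum is attained, and it is positive because $X$ is non-degenerate. The heart of the matter is to show that $|f| \equiv 1$ on $\partial X$. I would argue variationally: assuming $|f| < 1$ on a boundary arc, I would construct an admissible competitor $\tilde f \in \mathcal{O}(X,\overline{\UnitDisk})$ with $\tilde f'(\infty) > f'(\infty)$ by perturbing $f$ where the constraint $|f| \leq 1$ is inactive, contradicting extremality. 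Once $|f| = 1$ on the analytic curves bounding $X$, the Schwarz reflection principle extends $f$ analytically across $\partial X$ to a neighborhood of $\overline X$; this establishes the analytic continuation asserted in the theorem and shows in particular that $f : X \to \UnitDisk$ is proper.

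It then remains to compute the degree of $f$. Since $f$ is non-constant and analytic on each boundary curve $E_j$ with $|f| = 1$ there, the image $f(E_j)$ is open and closed in $\UnitCircle$, hence equal to $\UnitCircle$; thus $f$ restricts to a covering $E_j \to \UnitCircle$ of some winding number $w_j \geq 1$. By the argument principle the degree of $f$ equals $\sum_{j=1}^n w_j$, so it is at least $n$. To obtain equality I would invoke the finer part of Ahlfors' variational analysis, which shows that the extremal function winds exactly once around each boundary component, forcing $w_j = 1$ for all $j$ and hence degree exactly $n$. Together with the fact, already noted in the text, that the Ahlfors function satisfies $f(\infty) = 0$, this shows that $f$ is a Grunsky map and completes the proof.

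The main obstacle is the pair of extremal assertions in the middle step: that $|f| = 1$ on $\partial X$ and that $f$ winds exactly once around each boundary curve. Both are genuinely the content of Ahlfors' theorem and require the variational (duality) analysis of the extremal problem rather than soft arguments. By contrast, the reduction to analytic boundary, the normal-families existence, the Schwarz reflection, and the final degree bookkeeping are all routine.
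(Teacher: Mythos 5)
The paper gives no proof of this statement at all: it is quoted as Ahlfors' theorem with a citation to his 1947 paper \emph{Bounded analytic functions}, so there is no internal argument to measure yours against. Your outline follows the standard route --- reduce to analytic boundary via the transformation law, get the extremal function by normal families, show $|f|\equiv 1$ on $\partial X$, reflect, and count the degree --- and the reduction, the existence step, the Schwarz reflection, and the degree bookkeeping are all correct and routine, as you say. But the two assertions that carry the whole theorem, namely $|f|\equiv 1$ on $\partial X$ and winding number exactly one around each boundary curve, are precisely the content of the result being cited, and you do not prove them; so the proposal is in substance an elaborated citation, which puts it on the same footing as the paper but does not constitute an independent proof.

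One concrete caution about the only hard step you do sketch: the plan of ``perturbing $f$ where the constraint $|f|\leq 1$ is inactive'' cannot be carried out as described, because a holomorphic competitor cannot be modified on one boundary arc while being left alone elsewhere --- any admissible variation is global. Ahlfors' actual mechanism is the duality between the extremal problem for $g'(\infty)$ and a dual linear extremal problem whose solution is the Garabedian function $\psi$; the resulting boundary identity $f\psi\,dz=|\psi|\,|dz|$ on $\partial X$ forces $|f|=1$ there, and counting the zeros of $\psi$ (or the argument of $f\,$ along each $E_j$) gives degree exactly $n$. If you intend this to be a genuine proof rather than a pointer to \cite{AHL}, that duality argument is what has to be supplied.
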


We will need continuous dependence of Ahlfors functions on their domain at least when the limiting domain is bounded by analytic Jordan curves. This is false in general even if each domain considered is a non-degenerate circle domain.

\begin{example}
Let $X:= \RiemannSphere \setminus \overline{\mathbb{D}}$ and let $\{x_k\}_{k=1}^{\infty}$ be a sequence dense in $\UnitDisk$. Define $X_k$ to be the complement in $\RiemannSphere$ of disjoint closed disks centered at $x_1,x_2, \dots, x_k$ and contained in $\mathbb{D}$ of radius sufficiently small so that the analytic capacity of $\RiemannSphere \setminus X_k$ is less than $1/2$. This is always possible by outer-regularity of analytic capacity and by the fact that the analytic capacity of a finite set is zero, by Riemann's removable singularity theorem and Liouville's theorem. Then it is easy to see that $X_k \to X$ in the sense of Carath\'eodory, but the corresponding Ahlfors functions $f_k$ do not converge locally uniformly to the Ahlfors function on $X$, for otherwise we would have $\gamma(\RiemannSphere \setminus X_k) \to \gamma(\RiemannSphere \setminus X)=1$.
\end{example}

We thus need a stronger notion of convergence for domains.

\begin{definition}
Let $X$ and $X_k$ be domains in $\RiemannSphere$. We say that $X_k$ \textit{converges strongly to} $X$, and write $X_k \rightrightarrows X$, if for every compact set $K \subset X$ and every open set $U \supset \overline X$, we have $K \subset X_k \subset U$ for all but finitely many $k$.
\end{definition}

Note that if $\infty \in X$ and $X_k$ converges strongly to $X$, then $X_k$ converges to $X$ in the sense of Carath\'eodory.

\begin{lemma} \label{AhlforsContinuous}
Let $X$ be a planar domain containing $\infty$ and bounded by finitely many analytic Jordan curves. Suppose that $X_k \rightrightarrows X$, where $X_k$ are arbitrary domains containing $\infty$. Let $f_k$ and $f$ be the Ahlfors functions on $X_k$ and $X$ respectively. Then $(f_k)_{k=1}^{\infty}$ converges locally uniformly to $f$ on $X$.
\end{lemma}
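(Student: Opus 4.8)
The plan is to run a normal-families argument and identify the limit by its derivative at infinity, the characteristic property of the Ahlfors function; the decisive role of strong convergence will be to turn the analytically extended Ahlfors function $f$ on $X$ into an admissible competitor on each $X_k$. As usual, it suffices to show that every subsequence of $(f_k)$ has a further subsequence converging locally uniformly on $X$ to $f$.

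First I would fix a subsequence and extract a locally uniformly convergent one. Since $X_k \rightrightarrows X$, every compact $K \subset X$ satisfies $K \subset X_k$ for all large $k$, so the restrictions $f_k|_K$ are eventually defined and bounded by $1$ in modulus. By Montel's theorem I can extract a further subsequence along which $f_k \to g$ locally uniformly on $X$, with $g \in \mathcal{O}(X,\overline{\UnitDisk})$. Local uniform convergence on a neighborhood of $\infty$ gives $g(\infty)=\lim_k f_k(\infty)=0$ and $g'(\infty)=\lim_k f_k'(\infty)=\lim_k \gamma(\RiemannSphere \setminus X_k)$ (a limit of positive reals, hence real and nonnegative).

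It then remains to prove $g'(\infty)=\gamma(\RiemannSphere \setminus X)$: together with the maximum principle (which forces $g(X)\subset \UnitDisk$, as $g$ is nonconstant) and uniqueness of the Ahlfors function, this yields $g=f$. The upper bound $g'(\infty)\le \gamma(\RiemannSphere \setminus X)$ is immediate from the definition of analytic capacity as a supremum over $\mathcal{O}(X,\overline{\UnitDisk})$. For the lower bound I would use that $f$ extends holomorphically to a neighborhood $W$ of $\overline X$ by Theorem \ref{Ahlfors}, with $|f|\le 1$ on $\overline X$. Given $\epsilon>0$, continuity makes $U:=\{z\in W : |f(z)|<1+\epsilon\}$ an open set containing $\overline X$, so by strong convergence $X_k \subset U$ for all large $k$. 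Then $f/(1+\epsilon)$ restricts to an element of $\mathcal{O}(X_k,\UnitDisk)$, whence $\gamma(\RiemannSphere \setminus X_k) \ge \frac{1}{1+\epsilon}\,|f'(\infty)| = \frac{1}{1+\epsilon}\,\gamma(\RiemannSphere \setminus X)$. Letting $k\to\infty$ and then $\epsilon \to 0$ gives $\liminf_k \gamma(\RiemannSphere \setminus X_k) \ge \gamma(\RiemannSphere \setminus X)$, which combined with the upper bound forces $g'(\infty)=\gamma(\RiemannSphere \setminus X)$.

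The \emph{heart} of the argument, and the only place where strong rather than merely Carath\'eodory convergence is used, is this lower-bound step: the containment $X_k \subset U$ is exactly what makes the scaled extension of $f$ a legitimate test function on all of $X_k$, pinning down $\liminf_k \gamma(\RiemannSphere \setminus X_k)$ from below. Without it, the example preceding the lemma shows the conclusion genuinely fails, so this is where I expect the real content to lie; the normal-families extraction and the uniqueness conclusion are routine by comparison.
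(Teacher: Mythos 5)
Your proposal is correct and follows essentially the same route as the paper: a Montel normal-families extraction, the trivial upper bound $g'(\infty)\le\gamma(\RiemannSphere\setminus X)$, and then the key lower bound obtained by using strong convergence to place $X_k$ inside a neighborhood where the analytically extended $f$ is a near-admissible competitor, followed by uniqueness of the Ahlfors function. The only (cosmetic) difference is that the paper normalizes by $M_k:=\sup_{X_k}|f|\to 1$ rather than by $1+\epsilon$ on the sublevel set $\{|f|<1+\epsilon\}$.
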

\begin{proof}
By Ahlfors' theorem, $f$ extends holomorphically to some neighborhood $V$ of $\overline X$. Then take a neighborhood $U\supset \overline X$ with $\overline U \subset V$, so that $f$ is bounded on $U$. \\

We show that every subsequence of $( f_k )_{k=1}^\infty$ has a subsequence which converges to $f$, which implies that $(f_k)_{k=1}^{\infty}$ converges to $f$.\\

By Montel's theorem, the uniformly bounded sequence $( f_k )_{k=1}^\infty$ forms a normal family. Therefore, every subsequence of $( f_k )_{k=1}^\infty$ has a subsequence which converges locally uniformly to a holomorphic function on $X$.\\

Let $g$ be the locally uniform limit of a subsequence. Then $g \in \mathcal{O}(X,\overline\UnitDisk)$, so $g'(\infty) \leq f'(\infty)$.\\

By hypothesis, $X_k \rightrightarrows X$, so if $k$ is large enough we have $X_k \subset U$ and $f$ is defined and holomorphic on $X_k$. Let $M_k := \sup \{|f(z)| : z \in X_k\}$. Then $M_k^{-1}f \in \mathcal{O}(X_k,\overline \UnitDisk)$, so that $M_k^{-1}f'(\infty) \leq f_k'(\infty)$. Of course, $M_k \to 1$ as $k \to \infty$, since $ X_k\rightrightarrows X$ and $f$ is continuous on $U$. Therefore,
$$
f'(\infty) \leq \liminf f_k'(\infty) \leq g'(\infty)
$$
and thus $g'(\infty)=f'(\infty)$. By uniqueness of the Ahlfors function, we have $g=f$.
\end{proof}

If we require that each domain $X_k$ in the sequence has the same connectivity as $X$, then we can replace strong convergence by Carath\'eodory convergence.

\begin{theorem}\label{nconnectedAhlfors}
Let $X$ be a non-degenerate $n$-connected domain containing infinity. Suppose that $X_k \to X$, where each $X_k$ is a non-degenerate $n$-connected domain containing infinity. Let $f_k$ and $f$ be the Ahlfors functions on $X_k$ and $X$ respectively. Then $(f_k)_{k=1}^{\infty}$ converges locally uniformly to $f$ on $X$.
\end{theorem}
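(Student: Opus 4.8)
The plan is to reduce the statement to the already-established Lemma \ref{AhlforsContinuous} by uniformizing to circle domains, where Carath\'eodory convergence becomes strong enough to apply that lemma. First I would take the normalized Koebe representations $\phi_k : X_k \to Y_k$ and $\phi : X \to Y$ onto non-degenerate circle domains. By Theorem \ref{convergencekoebe}, $\phi_k \to \phi$ locally uniformly on $X$, the circle domains satisfy $Y_k \to Y$ in the sense of Carath\'eodory, and (by the remark following that theorem) the centers and radii of the boundary circles of $Y_k$ converge to those of $Y$.

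Since $\phi_k$ is tangent to the identity at infinity, the transformation law for analytic capacity (with $a=1$) identifies the Ahlfors function $g_k$ on $Y_k$ with $f_k \circ \phi_k^{-1}$, and likewise the Ahlfors function $g$ on $Y$ with $f \circ \phi^{-1}$. Thus $f_k = g_k \circ \phi_k$ and $f = g \circ \phi$, so it suffices to prove $g_k \to g$ locally uniformly on $Y$ and then transport this back through the convergent maps $\phi_k \to \phi$. This last composition step is routine: on a compact $K \subset X$, the images $\phi_k(K)$ eventually lie in a fixed compact $K' \subset Y$, and one estimates $|g_k \circ \phi_k - g \circ \phi|$ by $\sup_{K'}|g_k - g|$ plus a term controlled by the uniform continuity of $g$ on $K'$ together with $\phi_k \to \phi$.

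It remains to prove $g_k \to g$ for circle domains whose centers and radii converge, and here the key observation is that for such domains Carath\'eodory convergence upgrades to strong convergence $Y_k \rightrightarrows Y$. Indeed, given a compact $K \subset Y$, it has positive distance to each closed limit disk, so it avoids the slightly perturbed disks of $Y_k$ for large $k$, giving $K \subset Y_k$; and given an open $U \supset \overline Y$, the compact set $\RiemannSphere \setminus U$ is covered by the open limit disks and hence by the closed disks of $Y_k$ for large $k$, giving $Y_k \subset U$. Because $Y$ is bounded by round circles, which are analytic Jordan curves, Lemma \ref{AhlforsContinuous} then applies verbatim and yields $g_k \to g$ locally uniformly on $Y$.

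I expect the essential point to be conceptual rather than technical: recognizing that passing to circle representatives is exactly what converts Carath\'eodory convergence into the strong convergence demanded by Lemma \ref{AhlforsContinuous}, thereby excluding the pathology of the preceding example, in which the approximating domains had unbounded connectivity. The only genuine care needed is in verifying that the limiting disks are disjoint and non-degenerate, so that the strong-convergence argument goes through; this is guaranteed since $Y$ is a non-degenerate $n$-connected circle domain, and it is precisely the hypothesis that each $X_k$ shares the connectivity of $X$ that feeds into Theorem \ref{convergencekoebe} to keep the number of boundary circles fixed.
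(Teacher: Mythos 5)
Your proposal is correct and follows essentially the same route as the paper: pass to normalized Koebe representations, use Theorem \ref{convergencekoebe} to get convergence of the circle domains, observe that for circle domains of fixed connectivity Carath\'eodory convergence upgrades to strong convergence, apply Lemma \ref{AhlforsContinuous}, and transport back via the transformation law. The extra details you supply (the verification of $Y_k \rightrightarrows Y$ from convergence of centers and radii, and the composition estimate) are correct elaborations of steps the paper leaves implicit.
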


\begin{proof}
Let $h_k : X_k \to Y_k$ and $h : X \to Y$ be normalized Koebe representations. Then $(h_k)_{k=1}^{\infty}$ converges locally uniformly to $h$ on $X$ and $Y_k \to Y$, by Theorem \ref{convergencekoebe}. Since $Y_k$ and $Y$ are circle domains of connectivity $n$, we have in fact $Y_k \rightrightarrows Y$. Let $\varphi_k$ and $\varphi$ be the Ahlfors functions on $Y_k$ and $Y$ respectively. By Lemma \ref{AhlforsContinuous}, $(\varphi_k)_{k=1}^{\infty}$ converges to $\varphi$ locally uniformly on $Y$. By the transformation law, we have $f_k = \varphi_k \circ h_k$ and $f=\varphi \circ h$, so that $(f_k)_{k=1}^{\infty}$ converges to $f$ locally uniformly on $X$.
\end{proof}

\begin{remark}
By the transformation law, the Carath\'eodory length of $v_\infty$ is a well-defined function on the moduli space $\mathcal{M}(n)/\Sigma_n$. The above theorem implies that this function is continuous. A similar but more general result is true for the Kobayashi--Poincar\'e length (see \cite{HEJ}).
\end{remark}

\section{Rational Ahlfors functions}

Let $[X]\in \mathcal{M}(n)/\Sigma_n$. The Ahlfors function $f$ on $X$ is a Grunsky map by Theorem \ref{Ahlfors}. By Theorem \ref{normalizedBellrep}, there exists a unique $R \in \mathcal{R}(n)/\Sigma_n$ and a unique biholomorphism
$$
g : X \to  R^{-1}(\UnitDisk)
$$
tangent to the identity at infinity such that $f=R \circ g$. By the transformation law, $R$ is the Ahlfors function on $g(X)=R^{-1}(\UnitDisk)$.

\begin{definition}
If $Q \in \mathcal{R}(n)/\Sigma_n$ is such that $Q$ is the Ahlfors function on $Q^{-1}(\UnitDisk)$, we say that $Q$ is a \textit{normalized rational Ahlfors function}.
\end{definition}

Let $Q \in \mathcal{R}(n)/\Sigma_n$ be a normalized rational Ahlfors function such that there is a biholomorphism
$$
h : X \to Q^{-1}(\UnitDisk)
$$
tangent to the identity at infinity. Then $Q \circ h$ is the Ahlfors function on $X$ by the transformation law, and thus $Q=R$ by the uniqueness part of Theorem \ref{normalizedBellrep}. Accordingly, we say that $R$ is the \textit{normalized rational Ahlfors function associated to X}.\\

If the curves bounding $X$ are labelled $E_1,...,E_n$, then we may define $A(X,E_1,...,E_n)$ as the unique $(a,b) \in \mathcal{R}(n)$ such that $R_{a,b}=R$ and $b_j$ is contained in $g(E_j)$ for each $j$. If $$h: (X,E_1,...,E_n) \to (Y,F_1,...,F_n)$$ is an isomorphism, then the Ahlfors function on $Y$ is given by $f \circ h^{-1}$ by the transformation law, and the latter factors uniquely as $R_{a,b} \circ (g \circ h^{-1})$. This shows that the map
$$
\begin{array}{rcrcl}
A &:& \mathcal{M}(n) &\to& \mathcal{R}(n)\\
  & & [(X,E_1,...,E_n)] & \mapsto & (a,b)
\end{array}
$$
is well-defined. Moreover, the image $\mathcal{A}(n):=A(\mathcal{M}(n))$ is the set of $(a,b) \in \mathcal{R}(n)$ such that $R_{a,b}$ is a normalized rational Ahlfors function.
\\

The map $A$ is a right inverse for the map $P$ defined in section 4, since
$$
g : (X,E_1,...,E_n) \to (R_{a,b}^{-1}(\UnitDisk),g(E_1),...,g(E_n))
$$
is an isomorphism and $P(a,b)$ is by definition the isomorphism class of the latter.\\

We can use the map $A$ to construct a homeomorphism between $\mathcal{M}(n) \times \UnitCircle^n$ and $\mathcal{R}(n)$. In turn, this will shed light on the topological properties of $A$.

\begin{theorem}\label{MainTheorem}
There is a homeomorphism
$$
H : \mathcal{R}(n) \to \mathcal{M}(n) \times \UnitCircle^n
$$
which commutes with the action of $\Sigma_n$ and is
such that the diagrams
\begin{equation} \label{diagram1}
\begin{tikzcd}
\mathcal{R}(n) \arrow{r}{H}  \arrow{dr}{P}
& \mathcal{M}(n) \times \UnitCircle^n \arrow{d}{\pi} \\
{}
& \mathcal{M}(n)
\end{tikzcd}
\end{equation}
and
\begin{equation} \label{diagram2}
\begin{tikzcd}
\mathcal{R}(n) \arrow{r}{H}
& \mathcal{M}(n) \times \UnitCircle^n  \\
{}
& \mathcal{M}(n) \arrow{u}{\iota} \arrow{ul}{A}
\end{tikzcd}
\end{equation}
commute, where $\pi :  \mathcal{M}(n) \times \UnitCircle^n \to \mathcal{M}(n)$ is the projection onto the first factor and $\iota: \mathcal{M}(n) \to \mathcal{M}(n) \times \UnitCircle^n$ is the inclusion of $\mathcal{M}(n)$ as $\mathcal{M}(n) \times \{(1,...,1)\}$.
\end{theorem}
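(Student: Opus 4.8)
The plan is to coordinatize each fiber of $P$ by means of the Ahlfors function and to define $H$ from this data, then to check the two diagrams and the equivariance formally, and finally to reduce every continuity estimate to circle domains, where the boundary compactness results of Section 6 become available.

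First I would construct $H$. Given $(a,b)\in\mathcal{R}(n)$, put $Y:=R_{a,b}^{-1}(\UnitDisk)$ with boundary curves $F_1,\dots,F_n$ ordered so that $F_j$ encloses $b_j$, so that $P(a,b)=[(Y,F_1,\dots,F_n)]$. The map $R_{a,b}\colon Y\to\UnitDisk$ is itself a Grunsky map, hence extends to $\overline Y$ and there is a unique $p_j\in F_j$ with $R_{a,b}(p_j)=1$. Writing $\varphi$ for the Ahlfors function on $Y$, which is a Grunsky map and so carries each $F_j$ homeomorphically onto $\UnitCircle$ by Theorem \ref{Ahlfors}, I set $\beta_j:=\varphi(p_j)$ and define
$$
H(a,b):=\big(P(a,b),(\beta_1,\dots,\beta_n)\big).
$$
Then \eqref{diagram1} commutes by construction. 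For \eqref{diagram2}, if $(a,b)=A(\sigma)$ then $R_{a,b}$ \emph{is} the Ahlfors function on $Y$, so $\varphi=R_{a,b}$ and $\beta_j=R_{a,b}(p_j)=1$; since $P\circ A=\mathrm{id}$ this yields $H(A(\sigma))=(\sigma,(1,\dots,1))=\iota(\sigma)$. Equivariance is immediate: permuting the pairs $(a_j,b_j)$ leaves $Y$ and $R_{a,b}$ unchanged while permuting the labels $F_j$, the points $p_j$, and hence the entries $\beta_j$.

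Next I would show $H$ is bijective by writing down its inverse. Given $(\sigma,\beta)$ with $\sigma=[(X,E_1,\dots,E_n)]$, I compute the Ahlfors function $f$ on $X$ and let $\alpha_j\in E_j$ be the unique point with $f(\alpha_j)=\beta_j$, using that $f|_{E_j}\colon E_j\to\UnitCircle$ is a homeomorphism. Theorem \ref{BiebBellRep} then returns a unique $(a,b)\in\mathcal{R}(n)$ and an isomorphism $g$ for which $R_{a,b}\circ g$ is the Grunsky map for $(\alpha_1,\dots,\alpha_n)$. That this inverts $H$ is a short check: in one direction the uniqueness in Theorem \ref{normalizedBieberbach} forces $\alpha_j=p_j$, since $R_{a,b}$ is the Grunsky map taking the value $1$ there; in the other, the transformation law gives $\varphi\circ g=f$, so applying the forward map returns the same angles $\beta$.

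Finally I would establish continuity of $H$ and of $H^{-1}$. If $(a^k,b^k)\to(a,b)$, then $R_{a^k,b^k}\to R_{a,b}$ spherically and $Y_k\to Y$ in the sense of Carath\'eodory, so continuity of $P$ controls the first coordinate. For the angle coordinate I would pass to normalized Koebe representations $h_k\colon Y_k\to Z_k$, which converge together with the circle domains $Z_k\to Z$ by Theorem \ref{convergencekoebe}. The transported maps $R_{a^k,b^k}\circ h_k^{-1}$ are Grunsky maps on the converging circle domains $Z_k$ taking the value $1$ at $h_k(p_j^k)$, so Corollary \ref{GrunskyBoundaryPoints2} gives $h_k(p_j^k)\to h(p_j)$; meanwhile the transported Ahlfors functions converge \emph{uniformly on a neighborhood of $\overline Z$} by Lemma \ref{GrunskyVariableDomain} applied to them (they are Grunsky maps by Theorem \ref{Ahlfors}), their only possible limit being the Ahlfors function on $Z$ by Theorem \ref{nconnectedAhlfors}. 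Evaluating then gives $\beta^k\to\beta$. For $H^{-1}$ I would run the same reduction in reverse on the Koebe representatives, whose centers and radii converge directly from the topology on $\mathcal{M}(n)$: Corollary \ref{GrunskyBoundaryPoints2} yields convergence of the marked points $\alpha^k_j$, Corollary \ref{GrunskyBoundaryPoints1} upgrades this to locally uniform convergence of the Grunsky maps for $(\alpha^k)$, and continuity of Bell representations (Theorem \ref{thmconv}) together with Lemma \ref{LemmaRational} gives $(a^k,b^k)\to(a,b)$, the labeling being preserved since the isomorphisms $g_k$ converge. The main obstacle throughout is that Theorem \ref{nconnectedAhlfors} delivers only locally uniform convergence on the open domain, whereas $\beta$ is read off at boundary points; it is precisely the reflection-based compactness of Section 6, valid only for circle domains, that upgrades this to the uniform convergence up to the boundary that the argument requires.
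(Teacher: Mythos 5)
Your construction of $H$ and of its inverse $G$, the verification of the two diagrams and of $\Sigma_n$-equivariance, and the continuity argument for $G$ (Theorem \ref{nconnectedAhlfors} $\to$ Corollary \ref{GrunskyBoundaryPoints2} $\to$ Corollary \ref{GrunskyBoundaryPoints1} $\to$ Theorem \ref{thmconv} $\to$ Lemma \ref{LemmaRational}) coincide with the paper's proof. The one genuine divergence is how continuity of $H$ itself is obtained. The paper does not prove it directly: it observes via Lemmas \ref{dimmoduli} and \ref{dimrational} that $\mathcal{R}(n)$ and $\mathcal{M}(n)\times\UnitCircle^n$ are both $(4n-2)$-manifolds, so by Brouwer's invariance of domain a continuous bijection $G$ between them automatically has continuous inverse; only the continuity of $G$ is then checked. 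You instead prove continuity of $H$ analytically, transporting everything to the normalized Koebe circle domains $Z_k\to Z$, using Corollary \ref{GrunskyBoundaryPoints2} to get convergence of the marked points $h_k(p_j^k)$ and Lemma \ref{GrunskyVariableDomain} (pinned down by Theorem \ref{nconnectedAhlfors}) to upgrade locally uniform convergence of the Ahlfors functions to uniform convergence on a neighborhood of $\overline Z$, which is exactly what is needed to evaluate at boundary points. This argument is correct and buys independence from invariance of domain and from the dimension count, at the cost of a second pass through the Section 6 machinery; the paper's route is shorter but leans on a nontrivial topological theorem. Both you and the paper gloss equally over the bookkeeping that the labelled circles of $Z_k$ converge to the correspondingly labelled circles of $Z$, which is harmless here.
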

\begin{proof}
Let $(a,b)\in \mathcal{R}(n)$. We define $H=(H_1,H_2)$ as follows. We take
$$
H_1(a,b):=P(a,b)=[(R_{a,b}^{-1}(\UnitDisk), F_1,...,F_n)] \in \mathcal{M}(n),
$$
where $F_j$ is taken to be the boundary of the component of $R_{a,b}^{-1}(\RiemannSphere \setminus \UnitDisk)$ containing $b_j$. Let $\alpha_1,...,\alpha_n$ be the points in $F_1,...,F_n$ respectively with $R_{a,b}(\alpha_j)=1$. Also let $f: \overline{R_{a,b}^{-1}(\UnitDisk)} \to \overline{\UnitDisk}$ be the Ahlfors function. We set $$H_2(a,b):=(f(\alpha_1),...,f(\alpha_n)) \in \UnitCircle^n.$$ By construction, $H_2(a,b)=(1,...,1)$ if and only if $R_{a,b}$ is the Ahlfors function on $R_{a,b}^{-1}(\UnitDisk)$, i.e. if and only if $(a,b)=A(P(a,b))$.\\

We now construct an inverse $G$ to $H$. Given $[(X,E_1,...,E_n)] \in \mathcal{M}(n)$, let $f$ be the Ahlfors function on $X$. For $(\beta_1,...,\beta_n) \in \UnitCircle^n$, let $\alpha_j$ be the unique point in $E_j$ such that $f(\alpha_j)=\beta_j$. By Theorem \ref{BiebBellRep}, there is a unique $(a,b) \in \mathcal{R}(n)$ and a unique isomorphism $g : (X,E_1,...,E_n) \to (R_{a,b}^{-1}(\UnitDisk),F_1,...,F_n)$ such that $R_{a,b}(g(\alpha_j))=1$ for each $j$. We then set $
G([(X,E_1,...,E_n)],(\beta_1,...,\beta_n)):=(a,b).$\\

It is straightforward to verify that $G$ is the inverse of $H$, that both maps commute with the action of $\Sigma_n$, and that the diagrams commute.\\

By Lemma \ref{dimmoduli} and Lemma \ref{dimrational}, $\mathcal{R}(n)$ and $\mathcal{M}(n) \times \UnitCircle^n$ are both manifolds of dimension $(4n-2)$. By Brouwer's invariance of domain, $H$ is continuous if and only if $G$ is.\\

Let us prove that $G$ is continuous. Let $(X_k,E_1^k,...,E_n^k)$ be a sequence of normalized circle domains converging to $(X,E_1,...,E_n)$ and let $\beta^k \in \UnitCircle^n$ converge to $\beta$. Let $f_k$ and $f$ be the Ahlfors functions on $X_k$ and $X$ respectively. Let $\alpha_j^k$ be the point in $E_j^k$ such that $f_k(\alpha_j^k)=\beta_j^k$ and let $\alpha_j$ be the point in $E_j$ such that $f(\alpha_j)=\beta_j$.\\

By Theorem \ref{nconnectedAhlfors}, we have that $f_k$ converges to $f$ locally uniformly on $X$. By Corollary \ref{GrunskyBoundaryPoints2}, this implies that $\alpha^k \to \alpha$.\\

Let $g_k$ be the Grunsky map for $\alpha^k$ and $g$ the Grunsky map for $\alpha$. Then $g_k$ converges to $g$ locally uniformly on $X$ by Corollary \ref{GrunskyBoundaryPoints1}.\\

Let $(a^k,b^k)$ and $(a,b)$ be the parameters for the factorizations $R_{a^k,b^k} \circ h_k = g_k$ and $R_{a,b} \circ h = g$. Then $R_{a^k,b^k}$ converges to $R_{a,b}$ spherically uniformly on $\RiemannSphere$ by Theorem \ref{thmconv}. By Lemma \ref{LemmaRational}, we have $(a^k,b^k) \to (a,b)$.

\end{proof}

It follows that the maps $P$ and $A$ have the same topological properties as $\pi$ and $\iota$.

\begin{corollary}
The map $P$ is continuous and open, and for every $\sigma \in \mathcal{M}(n)$ the set $P^{-1}(\sigma)$ is homeomorphic to $\UnitCircle^n$.
\end{corollary}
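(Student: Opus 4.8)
The plan is to read everything off the commuting triangle \eqref{diagram1} of Theorem \ref{MainTheorem}, which asserts that $\pi \circ H = P$ with $H : \mathcal{R}(n) \to \mathcal{M}(n)\times\UnitCircle^n$ a homeomorphism and $\pi$ the projection onto the first factor. Since all the substantive work has already been done in establishing that $H$ is a homeomorphism, the corollary amounts to transporting the (elementary) topological properties of $\pi$ across $H$.

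First I would record that $P$ is continuous as a composition of the continuous maps $H$ and $\pi$; this also re-derives the continuity of $P$ noted in section \ref{rational}. Next, for openness, I would use that the projection $\pi : \mathcal{M}(n)\times\UnitCircle^n \to \mathcal{M}(n)$ is an open map, which is the standard fact that projection along any factor of a product carries open sets to open sets (the image of a basic open box $V\times W$ is the open set $V$, and openness is preserved under arbitrary unions). Since $H$ is a homeomorphism it is in particular open, so $P = \pi\circ H$ is open as a composition of open maps.

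Finally, for the fibers, I would fix $\sigma \in \mathcal{M}(n)$ and compute
$$
P^{-1}(\sigma) = (\pi\circ H)^{-1}(\sigma) = H^{-1}\bigl(\pi^{-1}(\sigma)\bigr) = H^{-1}\bigl(\{\sigma\}\times\UnitCircle^n\bigr).
$$
Because $H$ is a homeomorphism, its restriction furnishes a homeomorphism from $\{\sigma\}\times\UnitCircle^n$ onto $P^{-1}(\sigma)$, and the slice $\{\sigma\}\times\UnitCircle^n$ is homeomorphic to $\UnitCircle^n$ via the obvious projection. Composing these two homeomorphisms identifies $P^{-1}(\sigma)$ with $\UnitCircle^n$, as desired.

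There is essentially no obstacle here: every assertion follows formally from Theorem \ref{MainTheorem} together with the textbook facts that homeomorphisms are open and that coordinate projections of products are continuous and open. The one point worth stating explicitly, rather than the one point of difficulty, is the openness of $\pi$, since continuity and the fiber computation are immediate; but even this is routine. Thus the entire content of the corollary is packaged in the commutativity of \eqref{diagram1}, and the proof is a short diagram chase.
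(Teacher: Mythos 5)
Your proof is correct and is exactly the argument the paper intends: the corollary is stated without proof immediately after the remark that ``$P$ and $A$ have the same topological properties as $\pi$ and $\iota$,'' and your diagram chase through $P=\pi\circ H$ simply makes that explicit. Nothing differs in substance from the paper's (implicit) reasoning.
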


\begin{corollary} \label{ContinuousSection}
The map $A : \mathcal{M}(n) \to \mathcal{R}(n)$ is a topological embedding with closed image.
\end{corollary}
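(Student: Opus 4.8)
The plan is to deduce Corollary \ref{ContinuousSection} directly from Theorem \ref{MainTheorem} by exploiting the second commutative diagram \eqref{diagram2}. The key observation is that the homeomorphism $H$ conjugates the map $A$ to the standard inclusion $\iota : \mathcal{M}(n) \to \mathcal{M}(n) \times \UnitCircle^n$, which is manifestly a closed embedding. Precisely, since diagram \eqref{diagram2} commutes we have $H \circ A = \iota$, and because $H$ is a homeomorphism this gives $A = H^{-1} \circ \iota$. The composition of a topological embedding with a homeomorphism is again a topological embedding, so $A$ is a topological embedding. For the image, note that $\iota(\mathcal{M}(n)) = \mathcal{M}(n) \times \{(1,\dots,1)\}$ is closed in $\mathcal{M}(n) \times \UnitCircle^n$, and applying the homeomorphism $H^{-1}$ preserves closedness, whence $\mathcal{A}(n) = A(\mathcal{M}(n)) = H^{-1}(\iota(\mathcal{M}(n)))$ is closed in $\mathcal{R}(n)$.

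First I would verify that $\iota$ is a topological embedding with closed image. This is elementary: $\iota$ is the inclusion $\sigma \mapsto (\sigma,(1,\dots,1))$, which is continuous, injective, and a homeomorphism onto its image $\mathcal{M}(n) \times \{(1,\dots,1)\}$ with the subspace topology, since the first-factor projection $\pi$ restricted to this slice is a continuous inverse. The image is closed because $\{(1,\dots,1)\}$ is a single point and hence closed in $\UnitCircle^n$, so its preimage under the continuous projection $\mathcal{M}(n) \times \UnitCircle^n \to \UnitCircle^n$ is closed.

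Next I would assemble the argument for $A$. From $A = H^{-1} \circ \iota$ with $H^{-1}$ a homeomorphism and $\iota$ a closed embedding, both the embedding property and the closedness of the image transfer immediately. Since $\mathcal{R}(n)$ is a manifold by Lemma \ref{dimrational}, the closed embedded subset $\mathcal{A}(n)$ inherits the structure of a closed embedded submanifold, which is the statement recorded in the introduction.

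I do not expect any genuine obstacle here: all the analytic content has been absorbed into Theorem \ref{MainTheorem}, and what remains is purely formal point-set topology about how embeddings and closed sets behave under homeomorphisms. The only point requiring a word of care is that \emph{topological embedding} means a homeomorphism onto the image with its subspace topology, so one should confirm that conjugating $\iota$ by the homeomorphism $H^{-1}$ indeed respects subspace topologies, which it does automatically since homeomorphisms carry subspace topologies to subspace topologies.
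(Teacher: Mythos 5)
Your proposal is correct and is precisely the argument the paper intends: the corollary is stated as an immediate consequence of Theorem \ref{MainTheorem}, with the paper remarking that $A$ ``has the same topological properties as $\iota$'' via the identity $H \circ A = \iota$ from diagram \eqref{diagram2}. You have simply spelled out the same formal point-set reasoning in full, which is fine.
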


\begin{remark}
All the arrows in diagrams \ref{diagram1} and \ref{diagram2} commute with the action of the symmetric group $\Sigma_n$. Therefore, they descend to the quotients and the resulting diagrams
$$
\begin{tikzcd}
\mathcal{R}(n)/\Sigma_n \arrow{r}{\tilde H}  \arrow{dr}{\tilde P}
& (\mathcal{M}(n)/\Sigma_n) \times (\UnitCircle^n/\Sigma_n) \arrow{d}{\tilde \pi} \\
{}
& \mathcal{M}(n)/\Sigma_n
\end{tikzcd}
$$
and
$$
\begin{tikzcd}
\mathcal{R}(n)/\Sigma_n \arrow{r}{\tilde H}
& (\mathcal{M}(n)/\Sigma_n) \times (\UnitCircle^n/\Sigma_n)  \\
{}
& \mathcal{M}(n)/\Sigma_n \arrow{u}{\tilde \iota} \arrow{ul}{\tilde A}
\end{tikzcd}
$$
commute. The action of $\Sigma_n$ on $\mathcal{M}(n)$, $\mathcal{R}(n)$ and $\mathcal{A}(n)$ is properly disconti\-nuous and without fixed points. Therefore, the quotients $\mathcal{M}(n)/\Sigma_n$, $\mathcal{R}(n)/\Sigma_n$ and $\mathcal{A}(n)/\Sigma_n$ are manifolds without boundary. However, when $n>1$, the action of $ \Sigma_n$ on $\UnitCircle^n$ has fixed points and the quotient $\UnitCircle^n / \Sigma_n$ is a manifold with non-empty boundary (see \cite{MORTON}). For example, $\UnitCircle^2 / \Sigma_2$ is the M\"obius band with boundary.

\end{remark}

\section{The positivity criterion}

The manifold $\mathcal{A}(n):=A(\mathcal{M}(n))$  in  $\ComplexPlane^n \times \ComplexPlane^n$ represents all Ahlfors functions on all non-degenerate $n$-connected domains. It is therefore of interest to determine this manifold explicitly.\\

Let $\mathcal{R}^+(n)$ denote the subset of $\mathcal{R}(n)$ consisting of parameters $(a,b)$ such that all the $a_j$'s are real and positive, that is, $\mathcal{R}^+(n) := \mathcal{R}(n) \cap ((\Reals_{>0})^n \times \ComplexPlane^n)$.\\

Here is a heuristic argument explaining why one should expect $\mathcal{A}(n)$ and $\mathcal{R}^+(n)$ to have anything to do with each other. For every $(a,b) \in \mathcal{R}(n)$, we have
$$
R_{a,b}'(\infty) = \lim_{z\to \infty} z \sum_{j=1}^n \frac{a_j}{z-b_j}= \sum_{j=1}^n a_j.
$$
Given $\sigma = [(X,E_1,...,E_n)]$ in $\mathcal{M}(n)$ and $(a,b) \in P^{-1}(\sigma)$ with biholomorphism
$$
g : X \to R_{a,b}^{-1}(\UnitDisk)
$$
tangent to the identity at infinity,
we have that $R_{a,b} \circ g \in \mathcal{O}(X,\UnitDisk)$ and hence
$$
\Re \sum_{j=1}^n a_j = \Re R_{a,b}'(\infty) \leq |R_{a,b}'(\infty)|= |(R_{a,b}\circ g)'(\infty)| \leq \gamma(\RiemannSphere \setminus X).
$$
Moreover, if the equality $\Re \sum_{j=1}^n a_j = \gamma(\RiemannSphere \setminus X)$ occurs, then $R_{a,b}\circ g$ is the Ahlfors function on $X$, so that $(a,b)=A(\sigma)$. In other words, $A(\sigma)$ is the unique parameter $(a,b) \in P^{-1}(\sigma)$ maximizing the quantity $\Re \sum_{j=1}^n a_j$. Intuitively, this parameter should have all $a_j$'s nearly positive in order to maximize the real part of the sum $\sum_{j=1}^n a_j$. Of course, this depends on the shape of the $n$-dimensional torus $P^{-1}(\sigma)$ sitting inside $\ComplexPlane^n \times \ComplexPlane^n$.\\

In fact, $\mathcal{A}(n)$ is equal to $\mathcal{R}^+(n)$ for $n=1,2$, as shown in \cite{FBY}. However, this fails in higher connectivity :

\begin{lemma} \label{generalizedexample}
For every $n\geq 3$, $\mathcal{R}^+(n) \setminus \mathcal{A}(n)$ is not empty.
\end{lemma}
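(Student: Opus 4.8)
The plan is to bootstrap from connectivity $3$ to all $n\ge 3$ by adjoining $n-3$ extra poles carrying small positive residues. By \cite{FBY} there is a parameter $(a^0,b^0)\in\mathcal{R}^+(3)\setminus\mathcal{A}(3)$; write $R_0:=R_{a^0,b^0}$, $X_0:=R_0^{-1}(\UnitDisk)$ and $s_0:=\sum_{j=1}^3 a_j^0$. Since $R_0$ is a Grunsky map on $X_0$ with $R_0'(\infty)=s_0>0$ and $R_0$ is not the Ahlfors function on $X_0$, the sharp form of the defining inequality for analytic capacity gives the \emph{strict} inequality $s_0<\gamma_0$, where $\gamma_0:=\gamma(\RiemannSphere\setminus X_0)$. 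This gap is what I will exploit.

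Fix $n-3$ distinct points $p_4,\dots,p_n\in X_0$, chosen away from the poles and critical points of $R_0$, and for $t>0$ set
$$
R_t(z):=R_0(z)+\sum_{j=4}^n \frac{t}{z-p_j},\qquad X_t:=R_t^{-1}(\UnitDisk).
$$
First I would check that $R_t$ is $n$-good for all small $t>0$, so that (after a translation normalizing the poles to sum to zero, which affects neither the residues nor the Ahlfors property) it represents a point of $\mathcal{R}^+(n)$: all $n$ residues $a_1^0,a_2^0,a_3^0,t,\dots,t$ are real and positive, and by Lemma \ref{nconnected} it suffices to control the critical values. The four critical values of $R_0$ lie in $\UnitDisk$ and persist under perturbation, while near each $p_j$ two new critical points appear at distance $O(\sqrt t)$ with critical values tending to $R_0(p_j)\in\UnitDisk$ as $t\to 0$; hence all $2n-2$ critical values lie in $\UnitDisk$ for $t$ small. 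Equivalently, $\RiemannSphere\setminus X_t$ consists of three components close to those of $\RiemannSphere\setminus X_0$ together with $n-3$ tiny topological disks around the $p_j$.

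The crux is to show that for small $t$ the map $R_t$ is not the Ahlfors function on $X_t$, i.e. that $\sum_j a_j = s_0+(n-3)t<\gamma_t$ where $\gamma_t:=\gamma(\RiemannSphere\setminus X_t)$. Let $B_t$ denote the union of the three large components of $\RiemannSphere\setminus X_t$ and put $\tilde X_t:=\RiemannSphere\setminus B_t$, a non-degenerate $3$-connected domain. Since $R_t\to R_0$ locally uniformly off the $p_j$, the three bounding curves of $\tilde X_t$ converge to $\partial X_0$, so $\tilde X_t\to X_0$ in the sense of Carath\'eodory by the same reasoning as in section 4. By Theorem \ref{nconnectedAhlfors} the Ahlfors functions of $\tilde X_t$ converge locally uniformly to that of $X_0$, and reading off the coefficient of $1/z$ at $\infty$ gives $\gamma(B_t)\to\gamma_0$. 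Because $B_t\subset \RiemannSphere\setminus X_t$, monotonicity of analytic capacity (immediate from the definition) yields $\gamma_t\ge\gamma(B_t)$, so $\liminf_{t\to 0^+}\gamma_t\ge\gamma_0$. As $s_0+(n-3)t\to s_0<\gamma_0$, I may fix $t>0$ small with $s_0+(n-3)t<\gamma(B_t)\le\gamma_t$; the resulting normalized parameter then lies in $\mathcal{R}^+(n)\setminus\mathcal{A}(n)$.

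The main obstacle is that analytic capacity is genuinely discontinuous under the limit $t\to 0$ — exactly the phenomenon of the Example in section 7, since the $p_j$ degenerate into punctures — so one cannot assert $\gamma_t\to\gamma_0$ directly. The key point is that only a \emph{lower} bound on $\gamma_t$ is needed, and this is supplied by discarding the tiny components and applying the same-connectivity continuity of Theorem \ref{nconnectedAhlfors} to the three surviving ones. The only routine verification left is the critical-value perturbation showing that $R_t$ is $n$-good, which is a standard application of Rouch\'e's theorem.
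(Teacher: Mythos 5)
Your proposal is correct and follows the same overall strategy as the paper: start from a $3$-good map with positive residues that is not Ahlfors, adjoin $n-3$ extra poles carrying a common small positive residue $t$, and show that the derivative at infinity $s_0+(n-3)t$ stays strictly below the analytic capacity $\gamma_t$ for small $t$. The two arguments diverge in how they implement the technical steps. For $n$-goodness, the paper counts components of the complement of $Q_\varepsilon^{-1}(\UnitDisk)$ using strong convergence to the punctured domain $R^{-1}(\UnitDisk)\setminus\{b_4,\dots,b_n\}$, whereas you track critical values via Rouch\'e; both are fine. For the crucial lower bound on $\gamma_t$, the paper shows that the Ahlfors functions of the full $n$-connected domains converge to the Ahlfors function of the punctured limit domain (adapting the proof of Lemma \ref{AhlforsContinuous} and using that finite sets are removable, so this limit has derivative $\gamma_0$ at infinity); you instead discard the $n-3$ tiny complementary components, apply the same-connectivity continuity result (Theorem \ref{nconnectedAhlfors}) to the resulting $3$-connected domains $\tilde X_t$, and use monotonicity of $\gamma$ to get $\liminf_{t\to 0^+}\gamma_t\ge\gamma(B_t)\to\gamma_0>s_0$. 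Your route avoids both the removable-singularity step and the adaptation of Lemma \ref{AhlforsContinuous} to a limit domain that is not bounded by analytic Jordan curves, replacing them with monotonicity plus a theorem proved in full in the paper; the small price is the (routine) justification that the components of $\RiemannSphere\setminus X_t$ split into three large ones converging to those of $\RiemannSphere\setminus X_0$ and $n-3$ tiny ones, and that $\tilde X_t\to X_0$ in the sense of Carath\'eodory. Both versions deliver the required strict inequality and hence a point of $\mathcal{R}^+(n)\setminus\mathcal{A}(n)$ after normalizing the poles by a translation.
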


\begin{proof}
In \cite{FBY}, we give a numerical example of a $3$-good rational map with positive residues which is not Ahlfors. The specific example is
$$
R(z) : = \frac{0.4}{z} + \frac{0.4}{z-(1+i)} + \frac{0.4}{z-6}.
$$
\\

Choose distinct points $b_4,...,b_n$ in $R^{-1}(\UnitDisk)$. For $\varepsilon>0$, define
$$
Q_\varepsilon(z) := R(z) + \sum_{j=4}^n \frac{\varepsilon}{z-b_j}.
$$
Our claim is that when $\varepsilon$ is small enough, $Q_\varepsilon$ is $n$-good, but is not Ahlfors.\\

We have that $Q_\varepsilon \to R$ locally uniformly on $\RiemannSphere \setminus \{ b_4 ,..., b_n \}$ as $\varepsilon \to 0$. This implies that $Q_\varepsilon^{-1}(\UnitDisk)$ converges strongly to $R^{-1}(\UnitDisk) \setminus \{ b_4 ,..., b_n\}$. In particular, when $\varepsilon$ is small enough, $Q_\varepsilon^{-1}(\RiemannSphere \setminus \UnitDisk)$ has at least $n$ connected components and thus exactly $n$ since $Q_\varepsilon$ has degree $n$. Therefore, $Q_\varepsilon$ is $n$-good. \\

Let $f$ be the Ahlfors function on $R^{-1}(\UnitDisk) \setminus \{ b_4 ,..., b_n\}$. The singularities $b_4 ,..., b_n$ are removable so that $f$ is the Ahlfors function on $R^{-1}(\UnitDisk)$. Since $R$ is not the Ahlfors function on $R^{-1}(\UnitDisk)$, we have $f'(\infty) > R'(\infty)$. Now let $f_\varepsilon$ be the Ahlfors function on $Q_\varepsilon^{-1}(\UnitDisk)$. As in the proof of Lemma \ref{AhlforsContinuous}, we have that $f_\varepsilon \to f$ locally uniformly on $R^{-1}(\UnitDisk) \setminus \{ b_4 ,..., b_n\}$ as $\varepsilon \to 0$. In particular, we have $f_\varepsilon'(\infty) \to f'(\infty)$.\\

On the other hand, $Q_\varepsilon'(\infty) = R'(\infty) + (n-3) \varepsilon \to R'(\infty)$ as $\varepsilon \to 0$. If $\varepsilon$ is small enough, we thus have
$$
f_\varepsilon'(\infty) > Q_\varepsilon'(\infty)
$$
so that $Q_\varepsilon$ is not the Ahlfors function on $Q_\varepsilon^{-1}(\UnitDisk)$. Precomposing $Q_\varepsilon$ with the appropriate translation yields an element in $(\mathcal{R}^+(n)/\Sigma_n) \setminus (\mathcal{A}(n)/\Sigma_n)$.

\end{proof}

We will prove that $\mathcal{A}(n)$ is not contained in $\mathcal{R}^+(n)$ either for $n\geq 3$. Beforehand, we need topological information on $\mathcal{R}^+(n)$.

\begin{lemma}
Let $K \subset \mathcal F_n \ComplexPlane$ be a compact set. There exists an $\varepsilon>0$ such that if $b \in K$ and $a=(a_1,...,a_n)$ satisfies $0<|a_j|\leq\varepsilon$ for each $j\in\{1,...,n\}$, then $R_{a,b}$ is $n$-good.
\end{lemma}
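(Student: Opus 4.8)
The plan is to verify directly that $R_{a,b}$ satisfies condition (6) of Lemma \ref{nconnected}, namely that all of its critical values lie in $\UnitDisk$. First note that $R_{a,b}$ has degree exactly $n$: since $b \in \mathcal F_n \ComplexPlane$ the poles $b_1,\dots,b_n$ are distinct, and since $|a_j|>0$ each residue is nonzero, so writing $R_{a,b}=P/\prod_j(z-b_j)$ with $\deg P \leq n-1$ one sees that numerator and denominator share no common factor. Next, because $K$ is compact in $\mathcal F_n \ComplexPlane$, the minimal pairwise distance $\delta(b):=\min_{i\neq j}|b_i-b_j|$ is a positive continuous function on $K$ and hence is bounded below by some $\delta_0>0$, uniformly in $b \in K$. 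It therefore suffices to produce an $\varepsilon$ depending only on $n$ and $\delta_0$.

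The naive idea is that as $\varepsilon \to 0$ the map $R_{a,b}$ tends to $0$, so its critical values should become small; this is correct away from the poles but fails near them, and \emph{the main obstacle is to control the critical value of a critical point that drifts toward a pole}. Concretely, I would split into two cases according to the location of a finite critical point $w$ (a zero of $R_{a,b}'(z)=-\sum_j a_j/(z-b_j)^2$; the point $\infty$, if critical, has critical value $R_{a,b}(\infty)=0$). If $w$ lies outside every disk $\{|z-b_j|<\delta_0/2\}$, then $|w-b_j|\geq \delta_0/2$ for all $j$, giving immediately $|R_{a,b}(w)|\leq 2n\varepsilon/\delta_0$. If instead $w$ lies within $\delta_0/2$ of a (necessarily unique) pole $b_k$, the difficulty is that $|w-b_k|$ may itself be tiny; here I would exploit the critical point equation. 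Since $\sum_j a_j/(w-b_j)^2=0$ and $|w-b_j|\geq \delta_0/2$ for $j\neq k$, we get
$$
\frac{|a_k|}{|w-b_k|^2} \;\leq\; \sum_{j\neq k}\frac{|a_j|}{|w-b_j|^2} \;\leq\; \frac{4(n-1)\varepsilon}{\delta_0^{2}}.
$$
Rearranging and multiplying by $|a_k|$ yields $|a_k|/|w-b_k|\leq (2\sqrt{n-1}/\delta_0)\sqrt{|a_k|\,\varepsilon}\leq (2\sqrt{n-1}/\delta_0)\,\varepsilon$, using $|a_k|\leq\varepsilon$. Thus the dominant term of $R_{a,b}(w)$ stays small, and $|R_{a,b}(w)|$ is again $O(\varepsilon)$ with a constant depending only on $n$ and $\delta_0$.

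Combining the two cases, there is $\varepsilon>0$ depending only on $n$ and $\delta_0$, hence uniform over $b\in K$, such that every critical value of $R_{a,b}$ has modulus less than $1$ whenever $0<|a_j|\leq \varepsilon$ for all $j$. By Lemma \ref{nconnected}, $R_{a,b}$ is then $n$-good, which is the claim. The one point requiring care in the bookkeeping is that a critical point can never coincide with a pole (the $b_j$ are poles, not zeros, of $R_{a,b}'$), so that the two cases genuinely exhaust all finite critical points; everything else is routine estimation.
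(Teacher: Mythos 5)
Your proof is correct, but it takes a genuinely different route from the paper. The paper's argument is a soft compactness-plus-homogeneity trick: it defines the ``critical radius'' $\rho(a,b)$ as the modulus of the largest critical value of $R_{a,b}$, asserts that $\rho$ is continuous, takes its maximum $M$ over $\{\|a\|_\infty\le 1\}\times K$, and then uses the scaling identity $R_{\lambda a,b}=\lambda R_{a,b}$ (hence $\rho(\lambda a,b)=|\lambda|\rho(a,b)$) to conclude that $\varepsilon=1/2M$ works; since rescaling all residues by a common factor does not move the critical points, the issue of a critical point drifting into a pole never surfaces there. Your argument instead establishes the bound by hand: you reduce to condition (6) of Lemma \ref{nconnected} exactly as the paper implicitly does, extract a uniform lower bound $\delta_0$ on the pole separation from compactness of $K$, and then estimate critical values directly, with the key step being the use of the critical point equation $\sum_j a_j/(w-b_j)^2=0$ to control $|a_k|/|w-b_k|$ when a critical point $w$ sits close to the pole $b_k$. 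That estimate is the right one and your bookkeeping (uniqueness of the nearby pole, $|w-b_j|\ge\delta_0/2$ for $j\ne k$, the critical value $0$ at $\infty$, and the observation that a simple pole is never a critical point) is sound. What your version buys is an explicit $\varepsilon$ depending only on $n$ and $\delta_0$, and it avoids having to justify the continuity of the critical radius function, which the paper states without proof and which is slightly delicate precisely at the degenerate parameters where some $a_j\to 0$ and a critical point migrates into a pole or to infinity; what the paper's version buys is brevity. Both are complete proofs of the lemma.
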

\begin{proof}
For $(a,b) \in \ComplexPlane^n \times \ComplexPlane^n$, we define the critical radius $\rho(a,b)$ to be the modulus of the largest critical value of $R_{a,b}$. This is a continuous function. Note that $R_{\lambda a, b} = \lambda R_{a,b}$, so that $\rho(\lambda a , b) = |\lambda| \rho(a,b)$. Let
$$
B:= \{ a \in \ComplexPlane^n : \|a\|_\infty \leq 1 \}.
$$
Then $B$ is compact so that $\rho$ attains a maximum $M$ on $B \times K$. If $0< |a_j| \leq 1/2M$ for all $j$, then $2M a \in B$, so that for every $b\in K$ the critical radius of $(2M a, b)$ is at most $M$. The critical radius of $(a, b)$ is thus at most $1/2$, and hence $R_{a,b}$ is $n$-good since it has degree $n$ and all its critical values are in the unit disk.
\end{proof}

\begin{lemma} \label{PositiveConnected}
$\mathcal{R}^+(n)$ is a closed connected submanifold of $\mathcal{R}(n)$ of dimension $(3n-2)$.
\end{lemma}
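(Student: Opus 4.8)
The plan is to verify the four assertions in turn --- embedded submanifold, dimension $3n-2$, closedness, and connectedness --- the last being where the real work lies. I would first pass to the single chart $\theta : \mathcal{R}(n) \to \ComplexPlane^n \times \ComplexPlane^{n-1}$ of Lemma \ref{dimrational}, which identifies $\mathcal{R}(n)$ with the open set $\Omega := \theta(\mathcal{R}(n)) \subset \ComplexPlane^n \times \ComplexPlane^{n-1} \cong \Reals^{4n-2}$. Under $\theta$ the subset $\mathcal{R}^+(n)$ corresponds to $\Omega \cap L^+$, where $L^+ := (\Reals_{>0})^n \times \ComplexPlane^{n-1}$ is the locus $\{a_j > 0\}$ inside the real-linear subspace $L := \Reals^n \times \ComplexPlane^{n-1}$. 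Since $\dim_\Reals L = n + 2(n-1) = 3n-2$ and $L^+$ is relatively open in $L$, the intersection $\Omega \cap L^+$ is an open subset of the $(3n-2)$-dimensional subspace $L$, hence an embedded submanifold of dimension $3n-2$; transporting back through the diffeomorphism $\theta$ gives the submanifold and dimension claims.

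For closedness I would take a sequence $(a^k,\beta^k) \in \Omega \cap L^+$ converging in $\Omega$ to $(a,\beta)$. Each residue coordinate $a_j^k$ is real and positive, so $a_j$ is real and nonnegative, and the only possibility to exclude is $a_j = 0$. But membership in $\Omega$ forces $R_{a,b}$ to be $n$-good, hence of degree exactly $n$ with $n$ nonvanishing residues; were some $a_j = 0$, the map would have at most $n-1$ poles and degree $< n$, a contradiction. Thus every $a_j > 0$, so $(a,\beta) \in \Omega \cap L^+$ and $\mathcal{R}^+(n)$ is closed in $\mathcal{R}(n)$.

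The main obstacle is connectedness, which I would obtain by building, for arbitrary $(a,b),(a',b') \in \mathcal{R}^+(n)$, a path between them inside $\mathcal{R}^+(n)$ from two mechanisms. The scaling identity $R_{\lambda a, b} = \lambda R_{a,b}$ multiplies every critical value by $\lambda$ and preserves positivity of the residues, so $\lambda \mapsto (\lambda a, b)$ stays in $\mathcal{R}^+(n)$ for $\lambda \in (0,1]$; this lets me shrink residues freely. The preceding compactness lemma provides, for any compact $K \subset \mathcal{F}_n\ComplexPlane$, an $\varepsilon > 0$ such that $R_{a,c}$ is $n$-good whenever $c \in K$ and $0 < a_j \le \varepsilon$ for all $j$. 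I would combine these with the fact that the centered configuration space $\{ b \in \ComplexPlane^n : \sum_j b_j = 0,\ b_i \neq b_j \text{ for } i \neq j \}$ is path-connected, being a translation retract (subtract the centroid) of the path-connected $\mathcal{F}_n\ComplexPlane$.

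With these tools in place I would fix a path $\gamma$ in the centered configuration space from $b$ to $b'$, set $K := \gamma([0,1])$, and extract the corresponding $\varepsilon$. After scaling both endpoints down to residues $a^{\ast}, a^{\ast\ast} \in (0,\varepsilon]^n$, I concatenate three moves: transport along $\gamma$ at fixed residue $a^{\ast}$ from $(a^{\ast},b)$ to $(a^{\ast},b')$; linear interpolation of the residue from $a^{\ast}$ to $a^{\ast\ast}$ at the fixed configuration $b'$, the segment remaining in the convex set $(0,\varepsilon]^n$; and then scaling back up to $(a',b')$. Each segment stays in $\mathcal{R}^+(n)$ either by the scaling argument or by the compactness lemma, so $\mathcal{R}^+(n)$ is path-connected, hence connected. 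The one subtlety to watch is the order of operations: $\varepsilon$ depends on the chosen path $\gamma$, so $\gamma$ and hence $K$ and $\varepsilon$ must be fixed before the residues are shrunk below $\varepsilon$.
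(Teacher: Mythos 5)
Your proposal is correct and follows essentially the same route as the paper: the submanifold, dimension and closedness claims are handled as elementary consequences of the chart $\theta$, and connectedness is proved by shrinking the residues via the scaling $R_{\lambda a,b}=\lambda R_{a,b}$, fixing a path of centered pole configurations with its compact trace $K$ and the associated $\varepsilon$ from the preceding lemma \emph{before} shrinking, transporting the poles with small residues, and scaling back up. The only (immaterial) difference is that you move the poles and interpolate the residues in two separate stages, whereas the paper does both simultaneously on the middle third of the path.
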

\begin{proof}
The fact that $\mathcal{R}^+(n)$ is a closed submanifold of $\mathcal{R}(n)$ of dimension $(3n-2)$ is elementary.\\

We now prove that $\mathcal{R}^+(n)$ is path connected. Let $(a^0,b^0),(a^1,b^1) \in \mathcal{R}^+(n)$. To construct a path we first shrink the vector of residues $a^0$ while keeping the poles fixed. Once the residues are all small enough, we move the poles from $b^0$ to $b^1$ while adjusting the residues individually in such a way that we can then expand them by a common factor to end up with $a^1$.\\

It is well-known (and easy to prove by induction) that $\mathcal{F}_n\ComplexPlane$ is path-connected. Let $p : [0,1] \to \mathcal{F}_n\ComplexPlane$ be a path from $b^0$ to $b^1$. We can modify $p$ so that the sum of its coordinates is zero for all $t$. In other words, define $q(t)_j:= p(t)_j - \frac{1}{n}\sum_{i=1}^n p(t)_i$. Let $K:=q([0,1])$ denote the trace of the path, and let $0<\varepsilon \leq \min(\|a^0\|_\infty,\|a^1\|_\infty)$ be as in the previous lemma.\\

For each $j\in{1,...,n}$, define
$$
b_j^t := \left\{
\begin{array}{ccl}
b_j^0  & & t \in [0,1/3) \\
q(3t-1)_j & & t \in [1/3,2/3] \\
b_j^1 & & t\in (2/3,1]
\end{array}\right.
$$
and
$$
a_j^t := \left\{
\begin{array}{ccl}
\mu_t a_j^0  & & t \in [0,1/3) \\
(2-3t)\mu_{1/3} a_j^0 + (3t-1) \nu_{2/3} a_j^1 & & t \in [1/3,2/3] \\
\nu_t a_j^1 & & t\in (2/3,1]
\end{array}\right.,
$$
where
$$
\mu_t := 1-3(1- \varepsilon / \|a^0\|_\infty)t
$$
and
$$
\nu_t:= 3\varepsilon/\|a^1\|_\infty-2+3(1- \varepsilon / \|a^1\|_\infty)t
$$
are positive constants varying affinely with $t$ such that $\mu_0=\nu_1=1$, $\mu_{1/3} a_j^0 \leq \varepsilon$ and $\nu_{2/3} a_j^1 \leq \varepsilon$ for all $j$.\\

The path $(a^t,b^t)$ is clearly continuous and is such that the points $b_1^t,...,b_n^t$ are distinct and add up to zero and $a_1^t,...,a_n^t >0$ for all $t$. The map $R_{a^t,b^t}$ is $n$-good for $t\in [0,1/3)$ since
$$
R_{a^t,b^t} = \mu_t R_{a^0,b^0},
$$
and $\mu_t\in (0,1]$. It is $n$-good for $t\in (2/3,1]$ since
$$
R_{a^t,b^t} = \nu_t R_{a^1,b^1},
$$
and $\nu_t \in (0,1]$. Finally, $R_{a^t,b^t}$ is $n$-good for $t \in [1/3,2/3]$ because $b^t \in K$ and $0<a_j^t \leq \varepsilon$ for each $j$.\\

Therefore, $(a^t,b^t)$ is a path from $(a^0,b^0)$ to $(a^1,b^1)$ inside $\mathcal{R}^+(n)$.
\end{proof}

We can now prove :

\begin{lemma} \label{reverseinclusion}
For every $n\geq 3$, $\mathcal{A}(n) \setminus  \mathcal{R}^+(n)$ is not empty.
\end{lemma}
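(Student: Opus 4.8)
The plan is to argue by contradiction, fixing $n\geq 3$ and assuming that $\mathcal{A}(n) \subseteq \mathcal{R}^+(n)$. I will show that this assumption forces the equality $\mathcal{A}(n) = \mathcal{R}^+(n)$, which directly contradicts Lemma \ref{generalizedexample}.

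First I would assemble the topological facts, all of which are already available from earlier results. By Corollary \ref{ContinuousSection}, the map $A$ is a topological embedding with closed image, so $\mathcal{A}(n)$ is homeomorphic to $\mathcal{M}(n)$ and is closed in $\mathcal{R}(n)$. Since $\mathcal{M}(n)$ is homotopy equivalent to the path-connected configuration space $\mathcal{F}_n\ComplexPlane$, it is connected, and hence so is its continuous image $\mathcal{A}(n)$; moreover $\mathcal{A}(n)$ is nonempty. By Lemma \ref{dimmoduli}, $\mathcal{A}(n)$ is thus a nonempty connected manifold without boundary of dimension $(3n-2)$. By Lemma \ref{PositiveConnected}, $\mathcal{R}^+(n)$ is likewise a connected manifold without boundary of the same dimension $(3n-2)$.

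The key step is to promote the inclusion $\mathcal{A}(n) \subseteq \mathcal{R}^+(n)$ to an equality. The inclusion map $\mathcal{A}(n) \hookrightarrow \mathcal{R}^+(n)$ is a continuous injection between manifolds of equal dimension $(3n-2)$, so by Brouwer's invariance of domain it is an open map; in particular $\mathcal{A}(n)$ is open in $\mathcal{R}^+(n)$. On the other hand, $\mathcal{A}(n)$ is closed in $\mathcal{R}(n)$ and therefore closed in the subspace $\mathcal{R}^+(n)$. Since $\mathcal{R}^+(n)$ is connected and $\mathcal{A}(n)$ is a nonempty subset that is both open and closed in it, we conclude that $\mathcal{A}(n) = \mathcal{R}^+(n)$. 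This contradicts Lemma \ref{generalizedexample}, which exhibits a parameter in $\mathcal{R}^+(n) \setminus \mathcal{A}(n)$, and the contradiction proves that $\mathcal{A}(n) \setminus \mathcal{R}^+(n)$ is nonempty.

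I do not expect any single step to be a serious obstacle, precisely because the heavy lifting has already been done upstream: the Main Theorem, via Corollary \ref{ContinuousSection}, supplies the closed-embedding structure that makes $\mathcal{A}(n)$ a closed connected $(3n-2)$-manifold, and Lemma \ref{PositiveConnected} supplies the matching connected $(3n-2)$-manifold structure on $\mathcal{R}^+(n)$. The one point demanding care is the invariance-of-domain argument: it requires verifying that both $\mathcal{A}(n)$ and $\mathcal{R}^+(n)$ are genuine manifolds \emph{without boundary} of the same dimension, so that the continuous injection between them is forced to be open. Both facts are secured by Lemma \ref{dimmoduli} and Lemma \ref{PositiveConnected} (each space being an open subset of a single Euclidean chart), after which the clopen-in-a-connected-space argument closes the proof.
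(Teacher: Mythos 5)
Your proof is correct and is essentially the paper's argument: assume $\mathcal{A}(n)\subseteq\mathcal{R}^+(n)$, use Corollary \ref{ContinuousSection} and Lemma \ref{PositiveConnected} to get a continuous injection between $(3n-2)$-manifolds, apply Brouwer's invariance of domain together with closedness and connectedness to force $\mathcal{A}(n)=\mathcal{R}^+(n)$, and contradict Lemma \ref{generalizedexample}. The only difference is cosmetic (you phrase openness via the inclusion $\mathcal{A}(n)\hookrightarrow\mathcal{R}^+(n)$ rather than via the map $A$ itself), so there is nothing to add.
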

\begin{proof}
Suppose for a contradiction that $\mathcal{A}(n)$ is contained in $\mathcal{R}^+(n)$. The map $A : \mathcal{M}(n) \to \mathcal{R}^+(n)$ is then an embedding with closed image by Corollary \ref{ContinuousSection} and Lemma \ref{PositiveConnected}. Since $\mathcal{M}(n)$ and $\mathcal{R}^+(n)$ are both manifolds of dimension $(3n-2)$, the map $A$ is open by Brouwer's invariance of domain. Since $\mathcal{R}^+(n)$ is connected, $A$ must be surjective, which contradicts Lemma \ref{generalizedexample}.
\end{proof}

\begin{remark}
The above proof is non-constructive. Indeed, we don't know any explicit example of a rational Ahlfors function having non-positive residues.
\end{remark}

Lemma \ref{generalizedexample} and Lemma \ref{reverseinclusion} together yield :

\begin{theorem} \label{nonpositive}
For every $n\geq 3$, neither $\mathcal{R}^+(n) \subset \mathcal{A}(n)$  nor $\mathcal{A}(n) \subset \mathcal{R}^+(n)$.
\end{theorem}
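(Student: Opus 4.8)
The plan is to decompose the statement into its two independent non-inclusions and treat each by producing a witnessing parameter. The assertion $\mathcal{R}^+(n)\not\subset\mathcal{A}(n)$ is literally the nonemptiness of $\mathcal{R}^+(n)\setminus\mathcal{A}(n)$, and the assertion $\mathcal{A}(n)\not\subset\mathcal{R}^+(n)$ is literally the nonemptiness of $\mathcal{A}(n)\setminus\mathcal{R}^+(n)$. I would then observe that these are exactly the contents of Lemma \ref{generalizedexample} and Lemma \ref{reverseinclusion}, so that the theorem reduces to a one-line combination of the two preceding lemmas, once both have been established for all $n\geq 3$.

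For the first non-inclusion I would rely on the explicit construction behind Lemma \ref{generalizedexample}. Starting from a known $3$-good rational map $R$ with positive residues that fails to be Ahlfors, I would append $n-3$ simple poles $b_4,\dots,b_n$ inside the sublevel set with common residue $\varepsilon>0$, forming $Q_\varepsilon$. The key points are that for $\varepsilon$ small the sublevel set $Q_\varepsilon^{-1}(\UnitDisk)$ still has exactly $n$ components, so $Q_\varepsilon$ is $n$-good with positive residues, while the added singularities are removable for the Ahlfors function, so the Ahlfors derivative at infinity converges to that of $R$; since $Q_\varepsilon'(\infty)\to R'(\infty)$ as well, the strict inequality $f'(\infty)>R'(\infty)$ persists and $Q_\varepsilon$ is not Ahlfors. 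After the normalizing translation this yields a point of $\mathcal{R}^+(n)\setminus\mathcal{A}(n)$.

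For the second non-inclusion I would run the topological argument by contradiction behind Lemma \ref{reverseinclusion}, and this is where the real content lies. Assuming $\mathcal{A}(n)\subset\mathcal{R}^+(n)$, Corollary \ref{ContinuousSection} gives that $A$ is an embedding with closed image, and Lemma \ref{PositiveConnected} gives that $\mathcal{R}^+(n)$ is a connected manifold of the same dimension $3n-2$ as $\mathcal{M}(n)$. Brouwer's invariance of domain then forces $A$ to be open, so its image is open, closed, and nonempty in a connected space, hence all of $\mathcal{R}^+(n)$. That would make every positive parameter Ahlfors, contradicting the first non-inclusion; thus $\mathcal{A}(n)\setminus\mathcal{R}^+(n)$ is nonempty, albeit produced non-constructively.

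The hard part is entirely packaged into the two lemmas rather than into the theorem itself: the delicate inputs are the dimension count together with the connectedness of $\mathcal{R}^+(n)$ needed to apply invariance of domain, and the embedding-with-closed-image property of $A$ coming from the main theorem. Once those structural facts are granted, the proof of Theorem \ref{nonpositive} is simply to cite Lemma \ref{generalizedexample} for $\mathcal{R}^+(n)\not\subset\mathcal{A}(n)$ and Lemma \ref{reverseinclusion} for $\mathcal{A}(n)\not\subset\mathcal{R}^+(n)$.
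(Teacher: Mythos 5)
Your proposal is correct and follows the paper's argument exactly: the theorem is the conjunction of Lemma \ref{generalizedexample} and Lemma \ref{reverseinclusion}, and your sketches of both lemmas (the small-residue perturbation of the explicit $3$-good non-Ahlfors example, and the open-closed-connected contradiction via invariance of domain) match the paper's proofs. Nothing is missing.
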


\acknowledgments{The authors thank Jeremy Kahn and Thomas Ransford for helpful discussions.}

\bibliographystyle{amsplain}

\end{document}